\newtheorem{theorem}{Theorem}[section]
\newtheorem{lemma}[theorem]{Lemma}
\newtheorem{prop}[theorem]{Proposition}
\newtheorem{cor}[theorem]{Corollary}
\newtheorem{task}[theorem]{Task}
\newtheorem{deF}[theorem]{Definition}
\newtheorem{ques}[theorem]{Question}
\newtheorem{example}[theorem]{Example}
\newtheorem{notation}[theorem]{Notation}
\newcommand\VS[1]{\mbox{\rule{0pt}{#1}}}
\renewcommand{\implies}{\Rightarrow}
\newcommand{\zz}{\mathbb{Z}}
\newcommand{\nat}{\mathbb{N}}
\newcommand{\ratn}{\mathbb{Q}} 
\newcommand{\real}{\mathbb{R}}
\newcommand{\cplex}{\mathbb{C}}
\newcommand{\ncom}{\newcommand}
\def\abs#1{\left\vert #1 \right\vert}
\def\set#1{\lbrace #1 \rbrace}
\def\Set#1{\left\lbrace\, #1\, \right\rbrace}
\newcommand{\Inv}{{}^{-1}}
\newcommand{\al}{\alpha}
\newcommand{\ome}{\omega}
\newcommand{\ep}{\epsilon}
 \ncom{\Del}{\Delta}
\newcommand{\GGG}[1]{
\begin{gather*}
#1
\end{gather*}
}
\newcommand{\AAA}[1]{
\begin{align*}
#1
\end{align*}
}
\newcommand{\zec}{Zeckendorf}
\newcommand{\fib}{Fibonacci}
\newcommand{\funds}{fundamental sequence}
\newcommand{\lbd}{leading block distribution}
\newcommand{\HSW}[1]{\rule{#1\linewidth}{0pt}}
\newcommand{\cE}{\mathcal{F}}
\newcommand{\benlaw}{Benford's Law}
\newcommand{\cK}{\mathcal{K}}
\newcommand{\cH}{\mathcal{H}}
\newcommand{\cHst}{\mathcal{H}^*}
\newcommand{\fF}{\mathfrak{F}}
\newcommand{\fH}{\mathfrak{H}}
\newcommand{\fK}{\mathfrak{K}}
\newcommand{\vecb}{{\mathbf b}}
\newcommand{\vecc}{{\mathbf c}}
\newcommand{\lex}{lexicographical}
\newcommand{\cf}{coefficient function}
\newcommand{\UI}{\mathbf{I}}
\newcommand{\OI}{\UI}
\newcommand{\moD}[1]{\ (\mathrm{mod}\ #1)}
 \newcommand{\cF}{\mathcal{F}}
 \newcommand{\cFst}{\mathcal{F^*}}
\newcommand{\seq}[1]{\set{#1_n}_{n=1}^\infty}
\newcommand{\seQ}[1]{\set{#1 }_{n=1}^\infty}
\newcommand{\proB}[1]{\mathrm{Prob}\left\lbrace #1 \right\rbrace}
\newcommand{\wt}{\widetilde}
\newcommand{\msr}{\mathrm{msr}}
\newcommand{\len}{\mathrm{len}}
\newcommand{\FrcPart}[1]{\mathrm{frc}\left( #1 \right)}
\newcommand{\frc}{\FrcPart}
\newcommand{\prob}[1]{\mathrm{Prob}\Set{\VS{2ex}\,#1\,}}
\newcommand{\LB}{\mathrm{LB}}
\newcommand{\LD}{\mathrm{LD}}
\newcommand{\flr}[1]{\left\lfloor #1 \right\rfloor}
\newcommand{\nN}{n \in \nat}
\newcommand{\kN}{k \in \nat}
\newcommand{\wh}{\widehat}
\newcommand{\baR}{\overline}
\begin{document}

\begin{center}
\large\bf
Benford's Law under \zec\ expansion\\
\normalsize\rm
 Sungkon Chang and Steven J. Miller
\end{center}

 \begin{quote}
{\bf abstract}\quad
\footnotesize
 In the literature, 
Benford's Law is considered for base-$b$ expansions where $b>1$ is an integer.  
In this paper, we investigate the distribution of leading
\lq\lq digits\rq\rq\ of a sequence of positive integers under
other expansions such as \zec\ expansion, and declare
 what \benlaw\  should be under generalized \zec\ expansion.
\end{quote}

\section{Introduction}\label{sec:introduction}

Introduced in  \cite{benford,newcomb} is a  probability distribution of
the leading decimal digits of a sequence of positive integers,  known as {\it\benlaw}, and 
 the exponential sequences such as $\set{3^n}$ are standard examples 
 of sequences that satisfy \benlaw.
 Given $d\in\set{1,2,3,\dots,9}$,
the probability of having the leading digit $d$ in the decimal
expansion of $3^n$
is $\log_{10}\frac{d+1}{d}$,
and this distribution is  Benford's Law.
In fact, given a block $B$ of digits of any length,
the probability of having the leading block $B$ in the decimal
expansion of $3^n$ is given by a similar
logarithmic formula  as well, and 
this is known as {\it strong Benford's Law;} see Example \ref{exm:BL-10}. 
It is indeed a special property that a sequence has convergent proportions
for each leading digit.
For example,  
the proportion of odd integers $2n-1\le M$ with leading digit $d$ oscillates, and 
does not converge as $M\to\infty$; see Section \ref{thm:limsupinf}.

In the literature, 
Benford's Law is considered for base-$b$ expansions where $b>1$ is an integer. 
For example, the probabilities of 
the binary expansions of integer powers of $3$ having the leading binary digits
$100_2$  and $101_2$ are $\log_2\frac{2^2 + 1}{2^2}$ and 
$\log_2\frac{2^2 + 2}{2^2 + 1}$, respectively;
for  later reference, we may rewrite the values as follows:
\begin{equation}
\log_2\frac{1+2^{-2}}{1}\approx 0.322,\quad
\log_2\frac{1+2^{-1}}{1+2^{-2}}\approx 0.264.
\label{eq:binary-digits}
\end{equation}
In this paper, we shall consider the distribution of leading
\lq\lq digits\rq\rq\ of a sequence of positive integers under
other expansions such as \zec\ expansion  \cite{zec}.
For example, let $\seq F$ for $n\ge 1$ be the shifted
\fib\ sequence, i.e.,  
$F_{n+2}=F_{n+1}+F_n$ for all $\nN$ and $F_1=1$ and $F_2=2$,
and consider two \zec\ expansions:
 $3^5 = F_{12} + F_5 + F_2$
 and $3^8=F_{18}+F_{16}+F_{14}+F_{11}+F_{7}+F_{5}$.
Similar to the way the binary expansions are denoted, we may write
$$ 3^5=100000010010_F,\quad
3^8=101010010001010000_F$$
 where $1$'s are inserted at the $k$th place from the right if 
 $F_k$ is used in the expansions.
\begin{deF}\rm \label{def:LB-F}
Let $A=\set{0,1}$.
Given $\set{s,n}\subset\nat$, 
let $n=\sum_{k=1}^M a_k F_{M-k+1}$ be 
  the \zec\ expansion of $n$ (where $a_1=1$).
We define $\LB_s(n):= (a_1,\dots,a_s)\in A^s$
  if  $M\ge s$; otherwise, $\LB_s(n)$ is undefined.
The tuple $\LB_s(n)$ is called {\it the leading block of   $n$ with length $s$ under 
\zec\ expansion}.
\end{deF}
\noindent
For example, $\LB_3(3^5)=(1,0,0)$,
$\LB_3(3^8)=(1,0,1)$, and $\LB_6(3^8)=(1,0,1,0,1,0)$.
Since $\LB_2(n)=(1,0)$ for all integers $n\ge 2$, 
it is only meaningful to consider the first three or more \zec\ digits. 
We prove Theorem \ref{thm:BL-length-1} in this note.
\begin{deF}
\rm
Given  a conditional statement 
$P(n)$ where $\nN$, and a subset $A$ of $\nat$, let us define 
\GGG{
\prob{n\in A: P(n)\text{ is true}}
:=\lim_{n\to \infty} \frac{
\#\set{k\in A : P(k)\text{ is true},\ k\le n}}
{\#\set{k\in A :   k\le n}}. 
}
\end{deF}
\noindent
For example,  if $A=\set{n\in \nat : n\equiv 2 \mod 3}$,
then 
$\prob{ n \in A : n \equiv 1 \mod 5}=\frac15$.
If $A$ is finite, the limit always exists.

Let $\phi$ be the Golden ratio.
The following is an analogue of Benford's Law under  binary expansion 
demonstrated in (\ref{eq:binary-digits}).
\begin{theorem}\label{thm:BL-length-1}
Let $a>1$ be an integer.
\AAA{
\prob{\nN : \LB_3(a^n)=(1,0,0) }
&\ =\ \log_\phi(1+\phi^{-2})\approx .672,\\
\prob{\nN : \LB_3(a^n)=(1,0,1) }
&\ =\ \log_\phi\frac{\phi}{1+\phi^{-2}}\approx .328.
} 
\end{theorem}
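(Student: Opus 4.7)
The plan is to translate the Zeckendorf-leading-block condition into an interval condition on $a^n$, then into an equidistribution statement about the fractional parts of $n\log_\phi a$, and finally apply Weyl's theorem. For any positive integer whose Zeckendorf expansion has length at least $3$, the leading block $\LB_3$ must be either $(1,0,0)$ or $(1,0,1)$: the first digit is $1$ by definition, the second must be $0$ to satisfy the no-two-consecutive-$1$'s constraint, and the third is unconstrained. Thus these two cases partition $\mathbb N$ up to finitely many exceptional $n$, and indeed $\log_\phi(1+\phi^{-2})+\log_\phi\bigl(\phi/(1+\phi^{-2})\bigr)=1$, which is consistent with the claimed values. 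If $a^n$ has exactly $M$ Zeckendorf digits (so $F_M\le a^n<F_{M+1}$), a short calculation using the identity that the maximum integer with Zeckendorf length $\le L$ equals $F_{L+1}-1$ gives: $\LB_3(a^n)=(1,0,0)$ iff $F_M\le a^n<F_M+F_{M-2}$, and $\LB_3(a^n)=(1,0,1)$ iff $F_M+F_{M-2}\le a^n<F_{M+1}$.

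Using Binet's formula $F_k=(\phi^{k+1}-\psi^{k+1})/\sqrt 5$ (with $\psi=-1/\phi$ under the paper's indexing), one has $\log_\phi F_k=k+1-\tfrac12\log_\phi 5+O(\phi^{-2k})$ and $\log_\phi(F_k+F_{k-2})=k+1-\tfrac12\log_\phi 5+\log_\phi(1+\phi^{-2})+O(\phi^{-2k})$. Writing $\alpha:=\log_\phi a$, $\gamma:=1-\tfrac12\log_\phi 5$, and $\beta:=\log_\phi(1+\phi^{-2})$, applying $\log_\phi$ to the $(1,0,0)$ interval reduces the condition, up to endpoint perturbations of size $O(a^{-2n})$, to $\{n\alpha-\gamma\}\in[0,\beta) \pmod 1$. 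Irrationality of $\alpha$ is essential and follows because $\alpha=p/q$ would force $a^q=\phi^p$; but $\phi^p=(L_p+F^\star_p\sqrt 5)/2$ with $L_p,F^\star_p$ standard Lucas/Fibonacci integers, so $\phi^p$ is irrational for $p\ne 0$, and $p=0$ forces $a=1$, contradicting $a>1$. Weyl's equidistribution theorem then yields density exactly $\beta=\log_\phi(1+\phi^{-2})$ for the case $(1,0,0)$, and the complementary density $1-\beta=\log_\phi\bigl(\phi/(1+\phi^{-2})\bigr)$ for $(1,0,1)$.

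The main obstacle is handling the Binet error carefully enough to conclude the \emph{exact} density, since a priori Weyl gives the right density only for a fixed target interval, whereas here the target endpoints vary with $n$. I would address this by a standard boundary argument: equidistribution itself shows that the number of $n\le N$ with $\{n\alpha-\gamma\}$ within $\eta$ of either endpoint $0$ or $\beta$ is at most $(2\eta+o(1))N$; since the $n$-dependent perturbation has size $O(a^{-2n})\to 0$, any prescribed $\eta>0$ dominates the perturbation for all sufficiently large $n$, and letting $\eta\to 0$ shows that the discrepancy between the true count and the idealized Weyl count is $o(N)$. This confirms the exact values and completes the plan.
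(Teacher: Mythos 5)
Your proposal is correct and follows essentially the same route as the paper: the interval characterization $F_M\le a^n<F_M+F_{M-2}$ versus $F_M+F_{M-2}\le a^n<F_{M+1}$ is exactly the content of Lemma \ref{lem:LB-inequality-2} specialized to $s=3$, your constant $\gamma=1-\tfrac12\log_\phi 5$ is the paper's $\log_\phi\al$ from Lemma \ref{lem:inverse}, and your $\eta$-boundary argument for the $o(1)$ endpoint perturbations is the $1/t$ argument in the proof of Theorem \ref{thm:equidistr-BL}, after which Weyl's theorem is applied as in Corollary \ref{cor:equidistribution-examples}. The only addition is your explicit verification that $\log_\phi a$ is irrational for integer $a>1$, which the paper leaves as a hypothesis in the general statement; that is a welcome, correct detail.
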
 
\noindent
In particular, they exist!
Although the probabilities are different from the binary cases, 
the structure of the log expressions in Theorem 
\ref{thm:BL-length-1} is quite similar to that of 
the binary expansions in (\ref{eq:binary-digits}), i.e.,
the denominators of the quotients express the leading digits in power expansions
with respect to their bases.
The exponential sequences $\seQ{a^n}$ where $a>1$ is an integer
  are standard sequences that satisfy 
  \benlaw\ under base-$b$ expansion.
Motivated from these standard examples, 
we define \benlaw\ under \zec\ expansion to be the above
distribution of the leading blocks $(1,0,0)$ and $(1,0,1)$ under \zec\ expansion; see Definition \ref{def:F-benford-law}.

The exponential sequences $\seQ{a^n}$  
  are standard sequences for so-called {\it strong
  \benlaw\ under base-$b$ expansion} as well;
  see Example \ref{exm:BL-10}.
  We  introduce below  the probability of  the leading \zec\ digits of $a^n$ with arbitrary length, which is
 a generalization of Theorem \ref{thm:BL-length-1};
 this result is rewritten in Theorem \ref{thm:equidistribution-examples} with more compact notation.
\begin{deF}
\rm \label{def:B-tuples}
Let $A=\set{0,1}$, and let $s\ge 2$ be an integer.
Let $\mathbf b=(b_1,b_2,\dots, b_s)\in A^s$ such that $b_1=1$ and 
$b_k b_{k+1}=0$ for all $1\le k\le s-1$.
We define $\wt\vecb$ to be a tuple $ (\wt b_1,\dots,\wt b_s)\in A^s$ as follows.
If $1+\sum_{k=1}^s b_k F_{s-k+1}<F_{s+1}$,
then $\wt b_k$ for $1\le k\le s$ are defined to be  integers in $A$ such that 
$1+\sum_{k=1}^s b_k F_{s-k+1}= \sum_{k=1}^s\wt  b_k F_{s-k+1}$
and $\wt b_k\wt  b_{k+1}=0$ for all $1\le k\le s-1$.
If $1+\sum_{k=1}^s b_k F_{s-k+1} =  F_{s+1}$,
then $\wt b_1:=\wt b_2:=1$, and $\wt b_k:=0$ for all $3\le k\le s$.
\end{deF}
\noindent
For the case of $1+\sum_{k=1}^s b_k F_{s-k+1}<F_{s+1}$,
the existence of the tuple $\wt \vecb$ is guaranteed by 
\zec's Theorem.

 \begin{theorem}
Let $a>1$ and $s\ge 2$ be integers. 
Let $\vecb$ and $\wt \vecb$ be tuples defined in Definition \ref{def:B-tuples}.
 Then,
$$\prob{\nN : 
\LB_s(a^n)=\vecb}
\ =\ \log_\phi\,\frac
{\sum_{k=1}^s \wt b_k \phi^{-(k-1)}}
{\sum_{k=1}^s b_k \phi^{-(k-1)}}. $$
 
\end{theorem}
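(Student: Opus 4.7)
The plan is to reduce the claim to Weyl's equidistribution theorem applied to $\{n\log_\phi a\}$ modulo $1$.

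First I would characterize the event $\LB_s(a^n) = \vecb$ arithmetically. If $M$ denotes the length of the \zec\ expansion of $a^n$ (equivalently, $F_M \le a^n < F_{M+1}$, with $M \ge s$ for all but finitely many $n$), then the leading $s$ digits equal $\vecb$ exactly when
\[\sum_{k=1}^s b_k F_{M-k+1}\ \le\ a^n\ <\ \sum_{k=1}^s \wt b_k F_{M-k+1}.\]
Both cases of Definition \ref{def:B-tuples} are captured by this single inequality: in the carry case one has $\wt b_1 = \wt b_2 = 1$ and the upper endpoint collapses to $F_M + F_{M-1} = F_{M+1}$, which is precisely the smallest integer with $M+1$ \zec\ digits, so the inequality correctly forbids $a^n$ from exceeding the $\vecb$-band.

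Second I would apply Binet's formula, which in the paper's shifted indexing reads $F_m = (\phi^{m+1} - \psi^{m+1})/\sqrt{5}$ with $\psi = -1/\phi$. Setting
\[\beta\ :=\ \sum_{k=1}^s b_k \phi^{-(k-1)},\qquad \wt\beta\ :=\ \sum_{k=1}^s \wt b_k \phi^{-(k-1)},\]
a direct computation yields $\sum_k b_k F_{M-k+1} = \beta\,\phi^{M+1}/\sqrt{5} + O(\phi^{-M})$ together with the analogous identity for $\wt\beta$, since $|\psi|^{M-k+2} \le \phi^{-M}$. Dividing through by $\phi^{M+1}/\sqrt{5}$ and taking $\log_\phi$ transforms the event into
\[\log_\phi \beta + O(\phi^{-2M})\ \le\ n \log_\phi a\ -\ M\ +\ \gamma\ <\ \log_\phi \wt\beta + O(\phi^{-2M}),\]
where $\gamma := \log_\phi\sqrt{5} - 1$ is a fixed real constant. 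Because $[\log_\phi \beta,\ \log_\phi \wt\beta) \subset [0, 1)$ (note $1 \le \beta < \wt\beta \le \phi$) and because $M$ is an integer, this is equivalent, up to the vanishing boundary error, to the fractional-part condition $\{n\log_\phi a + \gamma\} \in [\log_\phi \beta,\ \log_\phi \wt\beta)$.

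Third, I would invoke Weyl's equidistribution theorem. The irrationality of $\log_\phi a$ follows from the fact that $\phi^p$ is irrational for every integer $p \ge 1$ (a one-line consequence of Binet): otherwise $\log_\phi a = p/q$ in lowest terms would force $a^q = \phi^p \in \nat$, a contradiction. Hence $\{n\log_\phi a + \gamma\}$ is uniformly distributed mod $1$, so the density of $n$ landing in any fixed subinterval of $[0, 1)$ equals its length. Sandwiching between $[\log_\phi \beta + \ep,\ \log_\phi \wt\beta - \ep)$ and $[\log_\phi \beta - \ep,\ \log_\phi \wt\beta + \ep)$ and letting $\ep \to 0^+$ absorbs the $O(\phi^{-2M})$ perturbation, giving
\[\prob{\nN : \LB_s(a^n) = \vecb}\ =\ \log_\phi \wt\beta - \log_\phi \beta\ =\ \log_\phi \frac{\wt\beta}{\beta},\]
exactly as claimed. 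The main obstacle I anticipate is the bookkeeping that unifies the two cases of Definition \ref{def:B-tuples} into a single inequality, together with justifying that the $O(\phi^{-2M})$ boundary errors do not disturb the limiting density; once these are in place the rest is a standard Weyl reduction.
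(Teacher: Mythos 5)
Your proposal is correct and follows essentially the same route as the paper: the paper packages your Binet-plus-logarithm reduction as the inverse of a ``Benford continuation'' $\fF$ of the Fibonacci sequence (Lemmas \ref{lem:inverse} and \ref{lem:LB-inequality-2}), proves that equidistribution of $\frc{\fF\Inv(K_n)}$ implies the stated leading-block probabilities via the same $\ep$-sandwich (Theorem \ref{thm:equidistr-BL}), and then invokes Weyl for $K_n=a^n$ exactly as you do (Corollary \ref{cor:equidistribution-examples}). The only cosmetic difference is that you work with $\log_\phi$ and Binet directly rather than through the named continuation.
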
 
\noindent
For example,
\begin{align*}
\prob{\nN : 
\LB_6(a^n)=(1,0,0,0,1,0)}
&\ =\ \log_\phi\frac{1+\phi^{-3}}{1+\phi^{-4}}\approx 0.157\\
\prob{\nN : 
\LB_6(a^n)=(1,0,1,0,1,0)}
&\ =\ \log_\phi\frac{1+\phi^{-1}}{1+\phi^{-2}+\phi^{-4}}\\
&\ =\ \log_\phi\frac{ \phi }{1+\phi^{-2}+\phi^{-4}}\approx 0.119.
\end{align*}

\noindent
 As in \benlaw\ under \zec\ expansion, we define the  probability distributions described in 
 Theorem    \ref{thm:equidistribution-examples} 
 to be 
 {\it strong \benlaw\ under \zec\ expansion}; see Definition \ref{def:strong-benford}.
 
Exponential sequences are standard examples for \benlaw s, but some exponential sequences 
do not satisfy 
\benlaw\ under some base-$b$ expansion.
Let us demonstrate examples under \zec\ expansion.
Let $\seq G$ be the sequence given by $G_k = F_{2k} + F_k$ for $k\in \nat$.
Then, given an integer $s>1$, the $s$ leading \zec\ digits of $G_k$ is 
$100\cdots00_F$ as $k\to\infty$ since the gap $2k-k=k$ between the indices of $F_{2k}$ and $F_n$ approaches $\infty$.
Thus,
$\prob{\nN : \LB_s(G_n)=(1,0,0,\dots,0) }=1$ for all $s\in\nat$, and the probabilities of other digits of length $s$ are all (asymptotically) $0$. 
Similar probability distributions occur for the Lucas sequence $\seq K$ given by $K_{k+2}=K_{k+1} + K_k$ for $\kN$
and $(K_1,K_2)=(2,1)$.
Given $s\in\nat$,
  the probabilities of having    leading \zec\ digits of length $s$ are entirely concentrated on 
  one particular string of digits.
  For example, 
  $\prob{\nN : \LB_{10}(K_n)=
  (1,0,0,0,1,0,0,0,1,0) }=1$, and the probabilities of having other digits of length $10$ 
is all (asymptotically)  $0$; see Example \ref{exm:lucas} for full answers.

Generalized \zec\ expansions are introduced in \cite{chang,mw}.
In Section \ref{sec:general-\zec},
we prove Theorem \ref{thm:equidistribution-examples-2} on the probability  of the leading digits of $a^n$ with arbitrary length under 
generalized \zec\ expansion, and define these probability distributions to be
 strong \benlaw\ under generalized \zec\ expansion; see Definition \ref{def:BL-GZ}.
As in the concept of 
{\it absolute normal numbers} \cite{erdos}, we introduce 
in Definition \ref{def:BL-absolute}  
the notion of {\it absolute \benlaw}, which is the property of
satisfying strong \benlaw\ under all generalized \zec\ expansions.
For example, the sequence given by $K_n=\flr{ \frac{\phi}{\sqrt5}(\tfrac{89}{55})^{n } } $
for $\nN$ satisfies strong \benlaw\ under
all generalized \zec\ expansions; see Example \ref{exm:ABL}.
Its first fifteen values are listed below:
$$(1, 1, 3, 4, 8, 12, 21, 34, 55, 89, 144, 233, 377, 610, 988).$$
They  are nearly equal to the \fib\ terms as 
$\frac{89}{55}$ is the $10$th convergent of the continued fraction of $\phi$.
The differences amplify as we look at higher terms, and even under \zec\ expansion, 
this sequence satisfies strong \benlaw.

It is also natural to consider sequences that have different
distributions, and in this note we  investigate 
  other distributions of leading digits  under generalized \zec\ expansions 
  as well.  In the following paragraphs, we shall explain this approach using base-$10$ expansion.
  The results for other expansions are introduced in Section \ref{sec:Other-continuations} and \ref{sec:general-\zec}.

Strong Benford's Law for the sequence $\seQ{3^n}$ under decimal expansion  follows from the equidistribution of the fractional part
of $\log_{10}(3^n)$ on the interval $(0,1)$.
 We realized that the function $\log_{10}(x)$ is merely a tool
 for calculating the leading digits, and 
 that other distributions of leading digits naturally emerge
  as we modified the function $\log_{10}(x)$.

We noticed that the frequency of leading digits
  converges when a continuation of the sequence $\seQ{10^{n-1}} $ has convergent behavior over the intervals
  $[n,n+1]$, and we phrase it more precisely below.
\begin{deF} \label{def:uniform-continuation} \rm
Let $\seq H$ be  an increasing sequence of positive integers.
A continuous function $h : [1,\infty)\to \real$ is called 
a {\it uniform continuation of $\seq H$} if $h(n)=H_n$ for all $\nN$, and 
the following sequence of functions $h_n: [0,1] \to [0,1]$
uniformly converges to an increasing (continuous) function:
$$h_n(p)=\frac{h(n+p)-h(n)}{h(n+1)-h(n)}.$$
If $h$ is a  uniform continuation of  $\seq H$,
 let $h_\infty : [0,1] \to [0,1]$ denote the increasing continuous  function given by
 $h_\infty(p)=\lim_{n\to\infty}h_n(p)$.
\end{deF}

Theorem \ref{thm:continuation-10} below is a version specialized for decimal expansion.
The proof of this theorem is similar to, and much simpler than the proof of Theorem \ref{thm:continuation} for \zec\ expansion, and 
we leave it to the reader.  

\begin{deF}
\rm \label{def:fractional-part}
If $\al\in\real$, we denote the fractional part of $\al$ 
by $\frc \al$.
Given a sequence $\seq K$ of real numbers,
we say, {\it $\frc{K_n}$ is equidistributed}
if $\prob{\nN : \frc{K_n}\le \beta} = \beta$ for all $\beta\in[0,1]$.
\end{deF}
 For example,
 consider the sequence $\seQ{\frc{n\pi}} $ where $\pi\approx 3.14$ is the irrational number.
 Then, by Weyl's Equidistribution Theorem, $\frc{n\pi}$ is equidistributed on the interval $[0,1]$.
The sequence $\seQ{\sin^2(n)}$ is an example of 
sequences that have $\prob{\nN : \sin^2(n)\le \beta}$
defined for each $\beta\in[0,1]$, and 
the probability is $\frac1\pi\cos\Inv(1-2\beta)$.
Thus, it is not equidistributed on $[0,1]$.

\begin{theorem}\label{thm:continuation-10}
Let $h : [1,\infty) \to \real$ be  a  uniform continuation of the sequence $\seQ{10^{k-1}} $.
    Then,  there is a  sequence
    $\seq K$ of positive integers  approaching $\infty$  (see Theorem \ref{thm:continuation-H} for the 
description of $K_n$) 
 such that 
    $\frc{ h\Inv(K_n) }$ is equidistributed. 

Let $\seq K$ be a sequence of positive integers  approaching $\infty$ such that 
    $\frc{ h\Inv(K_n) }$ is equidistributed.
Let $d$ be a positive integer of $s$ decimal digits.
Then, the probability of the $s$ leading decimal digits of $K_n$ being $d$ is 
equal to
\GGG{ 
h_\infty\Inv
\left(\frac{(d+1)- 10^{s-1}}{9\cdot 10^{s-1}} \right) - 
 h_\infty\Inv\left( \frac{d- 10^{s-1}}{9\cdot 10^{s-1}}  \right) .
}

\end{theorem}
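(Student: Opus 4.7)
\emph{Plan.} The argument has two essentially independent parts. For the existence claim I would fix any irrational $\alpha\in(0,1)$, set $y_n:=1+n\alpha$, and take $K_n$ to be the nearest positive integer to $h(y_n)$. Because $h$ is strictly increasing with $h(k)=10^{k-1}$, the rounding error $h^{-1}(K_n)-y_n$ is $o(1)$; combined with Weyl's theorem for $\frc{n\alpha}$, this implies that $\frc{h^{-1}(K_n)}$ is equidistributed and that $K_n\to\infty$. (A more conceptual construction is the one promised in Theorem \ref{thm:continuation-H}.)

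For the formula, write $k_n:=\lfloor h^{-1}(K_n)\rfloor$ and $p_n:=\frc{h^{-1}(K_n)}$. Since $h(k_n)=10^{k_n-1}$ and $h(k_n+1)=10^{k_n}$, the integer $K_n$ has exactly $k_n$ decimal digits, and
\[
h_{k_n}(p_n)\ =\ \frac{K_n-10^{k_n-1}}{9\cdot 10^{k_n-1}}\ =\ \frac{K_n/10^{k_n-1}-1}{9}.
\]
The leading $s$ decimal digits of $K_n$ equal a fixed $s$-digit integer $d$ iff $K_n/10^{k_n-1}\in[d/10^{s-1},(d+1)/10^{s-1})$, which by the display above rewrites as $h_{k_n}(p_n)\in I_d$, where
\[
I_d\ :=\ \left[\frac{d-10^{s-1}}{9\cdot 10^{s-1}},\ \frac{(d+1)-10^{s-1}}{9\cdot 10^{s-1}}\right)\subseteq[0,1].
\]

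Since $K_n\to\infty$ forces $k_n\to\infty$, the uniform convergence $h_n\to h_\infty$ together with the fact that $h_\infty\colon[0,1]\to[0,1]$ is an increasing homeomorphism yields the sandwich: for every $\epsilon>0$ there is $N_\epsilon$ such that
\[
h_\infty^{-1}(I_d^{-})\ \subseteq\ h_{k_n}^{-1}(I_d)\ \subseteq\ h_\infty^{-1}(I_d^{+})\qquad(n\ge N_\epsilon),
\]
where $I_d^{\pm}$ denote $I_d$ deflated, respectively inflated, by $\epsilon$ at each endpoint. The equidistribution of $(p_n)$ then traps the density of $\{n\in\nat:p_n\in h_{k_n}^{-1}(I_d)\}$ between the Lebesgue measures of $h_\infty^{-1}(I_d^{\pm})$, and sending $\epsilon\to 0^{+}$, using continuity of $h_\infty^{-1}$, yields the advertised value. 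The main obstacle is precisely this transfer from the moving conditions $h_{k_n}^{-1}(I_d)$ to the fixed set $h_\infty^{-1}(I_d)$; the monotonicity and continuity of the limit $h_\infty$ are essential so that uniform convergence of $h_n$ upgrades to uniform closeness of the relevant inverse images near $\partial I_d$.
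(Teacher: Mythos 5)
Your proposal is correct and follows essentially the same route as the paper's argument (which the paper leaves to the reader, directing them to adapt the proof of Theorem \ref{thm:continuation}): construct $K_n$ by pushing an equidistributed sequence through $h$, rewrite the leading-digit condition as $h_{k_n}(p_n)\in I_d$, and use uniform convergence of $h_n$ to $h_\infty$ plus an $\epsilon$-sandwich to pass from the moving preimages $h_{k_n}^{-1}(I_d)$ to the fixed set $h_\infty^{-1}(I_d)$ before applying equidistribution. The only cosmetic difference is that you work directly with the linear normalization $(K_n-10^{k_n-1})/(9\cdot 10^{k_n-1})$ rather than routing through the logarithmic (Benford) continuation as the paper's Zeckendorf-version proof does.
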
 
\begin{example}\label{exm:BL-10}
\rm
Let $h : [1,\infty) \to \real$ be the function given by
$h(x)=10^{x-1}$.
Then, $h$ is a  uniform continuation of the sequence $\set{10^{n-1}}$, 
and $h_\infty(p)=\frac19(10^p-1)$.
 By Theorem \ref{thm:continuation-H},
 the sequence $\seq K$ with the equidistribution property 
 is given by $K_n = \flr{ 10^{n + \frc{n\pi}} }$, but 
 there are simpler sequences such as $\seQ {3^n}$ that have 
 the property.

By Theorem \ref{thm:continuation-10}, the probability of the $s$ leading decimal digits of $K_n$ being $d$ is equal to 
$$\log_{10}\frac{d+1}{10^{s-1}}-\log_{10}\frac{d }{10^{s-1}}
\ =\ \log_{10}\left( 1+\dfrac1d\right) $$
where $d\in\nat$ has $s$ decimal digits.
This distribution is known as strong \benlaw\ under 
base-$10$ expansion, and we may say that 
strong \benlaw\ under base-$10$ expansion arises from the logarithmic continuation of $\seQ{10^{n-1}}$.
For this reason, we call $h(x)$ {\it a Benford continuation of the base-10 sequence}.
\end{example}

\begin{example}\label{exm:log-10}\rm
 Let $h : [1,\infty) \to \real$ be the function whose 
graph is the union of the line segments 
from $(n,10^{n-1})$ to $(n+1,10^n)$ for all $\nN$.
Let $\seq K$ be the sequence given by $K_n= \flr{10^{n + \log_{10}( 9{\frc{n\pi}+1})}}$ as described in 
Theorem \ref{thm:continuation-H}.
Then, the fractional part $\frc{h\Inv(K_n)}$ is equidistributed.
The limit function $h_\infty$ defined in Theorem \ref{thm:continuation-10} is given by $h_\infty(p)=p$ for $p\in[0,1]$, and 
 given a decimal expansion $d$ of length $s$, the probability of 
 the $s$ leading decimal digits of $K_n$ being $d$ is (uniformly) equal to $1/(9\cdot10^{s-1})$ by 
 Theorem \ref{thm:continuation-10}.

The first ten values of  $ {K_n}$   are 
\footnotesize
\GGG{
(22, 354, 4823, 60973, 737166, 8646003, 99203371, 219467105, \
3469004940, 47433388230).
}\normalsize
 For example, if we look at many more terms of $ K$,
 then the first two digits $ 22 $ of $ K_1$ will occur as leading digits
 with probability $1/90\approx 0.011$, and the probability for the digits $99$
 is also $1/90$.
As in constructing a normal number, it's tricky to 
 construct a sequence of positive integers with this property,
 and prove that it has the property.
Let us note here that the $s$ leading decimal digits of the sequence $\seQ{n}$  has frequency close to $1/(9\cdot10^{s-1})$, but it oscillates and does not converge as more terms are considered; 
 see Theorem \ref{thm:limsupinf} for a version under \zec\ expansion.
 In Example \ref{exm:zec-uniform}, we demonstrate the \lq\lq line-segment\rq\rq\ continuation of the \fib\ sequence.
\end{example}
In Example \ref{exm:fake-BF}, we use a more refined 
\lq\lq line segment continuation\rq\rq,
and demonstrate a uniform continuation that generates 
the distribution of leading blocks that satisfies strong \benlaw\ up to the $4$th digits, but 
does not satisfy the law for the leading blocks of length $>4$.
 
Theorem \ref{thm:continuation-10}  suggests
that given a uniform continuation  $h$ of the sequence $\seQ{10^{n-1}}$,
  we associate certain distributions of leading digits, coming from the equidistribution property.
It's natural to consider the converse that given
a sequence $\seQ{K_n}$ with \lq\lq continuous distribution of leading digits\rq\rq\ of arbitrary length,
 we associate a certain uniform continuation of $\seQ{10^{n-1}}$.
 Theorem \ref{thm:converse-10} below is a version for base-$10$
 expansion.
 In Section \ref{sec:Other-continuations},  we introduce results on this topic for 
 the \fib\ sequence $\seQ{F_n}$.
 The proof of Theorem \ref{thm:converse-10}  is similar to, and simpler than Theorem 
 \ref{thm:f-star} for the \fib\ expansion, and
 leave it to the reader.
 
 \begin{theorem}\label{thm:converse-10}
Let $\seq K$ be a sequence of positive integers approaching $\infty$.
Let $h_K^* : [0,1]\to[0,1]$ be the function given by 
$h_K^*(0)=0$, $h_K^*(1)=1$, and 
\begin{equation}
h_K^*(\tfrac19(\beta - 1))\ =\ \lim_{s\to\infty}
\prob{ \nN :
\text{The $s$ leading decimal digits of $K_n$ is 
$\le \flr{10^{s-1}\beta}$ } }  \label{eq:f-star-10}
\end{equation} 
where $\beta$ varies over the real numbers in the interval
$[1,10)$ and we assume that the RHS of 
(\ref{eq:f-star-10}) is defined for all $\beta\in[1,10)$.
If $h_K^*$ is an increasing continuous function,
then there is a uniform continuation $h$ of the sequence
$\seQ{10^{n-1}}$ such that 
$h_\infty \Inv= h_K^*$,
and $\frc{h\Inv(K_n)}$ is equidistributed.

\end{theorem}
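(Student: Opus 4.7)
The plan is to construct $h$ explicitly from the hypothesis by setting $g := (h_K^*)\Inv$, which, since $h_K^*$ is assumed to be an increasing continuous bijection of $[0,1]$ with $h_K^*(0)=0$ and $h_K^*(1)=1$, is again an increasing continuous bijection of $[0,1]$ with $g(0)=0$ and $g(1)=1$. I would then define
$$h(n+p)\ :=\ 10^{n-1}\bigl(1 + 9\, g(p)\bigr), \qquad \nN,\ p\in [0,1].$$
Continuity across integer boundaries and the matching $h(n) = 10^{n-1}$ are immediate from $g(0)=0$, $g(1)=1$. A direct computation gives $h_n(p) = g(p)$ for \emph{every} $n$, so the sequence of normalized graphs is literally constant; hence $h_n \to g$ uniformly and $h_\infty = g$. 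This already shows $h$ is a uniform continuation of $\seQ{10^{n-1}}$ with $h_\infty\Inv = h_K^*$, settling the first half of the conclusion.

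For the equidistribution, I would invert $h$ explicitly: if $K \in [10^{m-1}, 10^m)$, then
$$h\Inv(K)\ =\ m + g\Inv\!\left(\frac{K - 10^{m-1}}{9\cdot 10^{m-1}}\right)\ =\ m + h_K^*\!\left(\frac{K/10^{m-1} - 1}{9}\right).$$
Writing $\mathrm{sig}(K) := K/10^{m-1} \in [1,10)$ for the base-10 significand, this yields $\frc{h\Inv(K_n)} = h_K^*\bigl(\tfrac{\mathrm{sig}(K_n) - 1}{9}\bigr)$. Given $\al\in [0,1]$, set $\beta := 1 + 9g(\al) \in [1,10]$; then the event $\frc{h\Inv(K_n)} \le \al$ is exactly $\mathrm{sig}(K_n) \le \beta$. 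Combining with the identity
$$\prob{\nN : \mathrm{sig}(K_n) \le \beta}\ =\ h_K^*(\tfrac{\beta-1}{9}),$$
which one extracts from the defining equation (\ref{eq:f-star-10}), gives $\prob{\nN : \frc{h\Inv(K_n)}\le\al} = h_K^*(g(\al)) = \al$, which is equidistribution.

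The only step that requires genuine argument, and which I expect to be the main obstacle, is the displayed identity above. The event that the $s$ leading decimal digits of $K_n$ form an integer $\le \flr{10^{s-1}\beta}$ is the same as $\flr{10^{s-1}\mathrm{sig}(K_n)} \le \flr{10^{s-1}\beta}$, and this differs from the target event $\mathrm{sig}(K_n)\le\beta$ only on a sliver of significand values of length $O(10^{-s})$ near $\beta$. The continuity of $h_K^*$ translates into continuity of the induced distribution function $\beta \mapsto \prob{\nN : \mathrm{sig}(K_n)\le\beta}$, so no atoms can sit on the boundary value $\beta$, and the limit in $s$ inside (\ref{eq:f-star-10}) commutes with the probability and reproduces $h_K^*(\tfrac{\beta-1}{9})$. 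This continuity-of-CDF bookkeeping, rather than the construction of $h$ itself, is the delicate point; the rest reduces to the algebraic manipulations outlined above.
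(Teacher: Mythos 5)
Your proposal is correct and follows essentially the same route as the paper's intended proof (the paper leaves Theorem \ref{thm:converse-10} to the reader as the base-$10$ specialization of the proof of Theorem \ref{thm:f-star}): you build $h$ by rescaling $(h_K^*)\Inv$ into each interval $[10^{n-1},10^{n}]$, observe $h_n=g$ identically so uniform convergence is trivial, and reduce equidistribution of $\frc{h\Inv(K_n)}$ to the identity $\prob{\nN : \mathrm{sig}(K_n)\le\beta}=h_K^*(\tfrac19(\beta-1))$. The ``delicate point'' you flag is handled in the paper's Fibonacci-case proof by exactly the $\limsup$/$\liminf$ sandwich over the truncation length $s$ together with continuity of $h_K^*$ (cf.\ Lemma \ref{lem:finite-cf}), which is the rigorous form of your sliver-plus-no-atoms argument, so no new idea is missing.
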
 

If a sequence
    $K$ of positive integers  approaching $ \infty$ satisfies (\ref{eq:f-star-10})
    where $h_K^*$ is an increasing continuous function,
    the sequence is said to {\it
    have continuous \lbd\ under base-$10$ expansion}; see 
    Definition \ref{def:f-star} for \zec\ expansion.
By Theorem \ref{thm:continuation-10} and \ref{thm:converse-10}, 
we have
\begin{theorem}
A  sequence
    $\seq K$ of positive integers   approaching $ \infty$ has continuous \lbd\ under base-$10$ expansion
    if and only if $\frc{h\Inv(K_n)}$ is equidistributed
    for some uniform continuation $h$ of $\seQ{10^{n-1}}$.
\end{theorem}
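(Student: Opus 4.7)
The plan is to derive this biconditional as a direct corollary of Theorems \ref{thm:continuation-10} and \ref{thm:converse-10}, with the bulk of the work in the reverse direction amounting to a short telescoping computation.

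For the only-if direction, I would simply observe that by definition $\seq K$ has continuous \lbd\ under base-$10$ expansion exactly when $h_K^*$, built from $\seq K$ via (\ref{eq:f-star-10}), is an increasing continuous function on $[0,1]$. Theorem \ref{thm:converse-10} then immediately supplies a uniform continuation $h$ of $\seQ{10^{n-1}}$ such that $h_\infty\Inv = h_K^*$ and $\frc{h\Inv(K_n)}$ is equidistributed.

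For the if direction, assume $\frc{h\Inv(K_n)}$ is equidistributed for some uniform continuation $h$ of $\seQ{10^{n-1}}$. Fix $\beta\in[1,10)$ and $s\in\nat$. By Theorem \ref{thm:continuation-10}, for each $s$-digit integer $d$ the probability that the $s$ leading decimal digits of $K_n$ equal $d$ is
\[
h_\infty\Inv\!\left(\tfrac{(d+1)-10^{s-1}}{9\cdot 10^{s-1}}\right) - h_\infty\Inv\!\left(\tfrac{d-10^{s-1}}{9\cdot 10^{s-1}}\right).
\]
Summing this telescoping difference over $10^{s-1}\le d\le \flr{10^{s-1}\beta}$, and using $h_\infty(0)=0$ (immediate from $h_n(0)=0$ in Definition \ref{def:uniform-continuation}), gives
\[
\prob{\nN : \text{the $s$ leading decimal digits of $K_n$ are $\le \flr{10^{s-1}\beta}$}} \ =\ h_\infty\Inv\!\left(\tfrac{\flr{10^{s-1}\beta}+1-10^{s-1}}{9\cdot 10^{s-1}}\right).
\]
Letting $s\to\infty$, the argument of $h_\infty\Inv$ converges to $\tfrac19(\beta-1)$, so by continuity of $h_\infty\Inv$ I may pass the limit inside to conclude $h_K^*\bigl(\tfrac19(\beta-1)\bigr) = h_\infty\Inv\bigl(\tfrac19(\beta-1)\bigr)$ for every $\beta\in[1,10)$. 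Thus $h_K^*$ coincides with the continuous increasing function $h_\infty\Inv$, so $\seq K$ has continuous \lbd.

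The only step requiring any care is this final limit passage, which rests on the continuity of $h_\infty\Inv$; this is automatic because $h_\infty$ is an increasing continuous surjection $[0,1]\to[0,1]$ by the uniform convergence built into Definition \ref{def:uniform-continuation}. Since all substantive content is already packaged inside Theorems \ref{thm:continuation-10} and \ref{thm:converse-10}, no further machinery is needed.
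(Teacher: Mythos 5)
Your proposal is correct and follows the paper's own route exactly: the paper derives this equivalence as an immediate consequence of Theorems \ref{thm:continuation-10} and \ref{thm:converse-10}, with the converse direction supplying the uniform continuation and the forward direction handled by the telescoping sum over leading blocks of length $s$ followed by the limit $s\to\infty$. Your explicit verification that $h_K^*=h_\infty\Inv$ (using $h_\infty(0)=0$, $h_\infty(1)=1$, and the continuity of $h_\infty\Inv$) is precisely the detail the paper leaves to the reader.
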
 
\begin{cor}
If a sequence $\seq K$ satisfies strong \benlaw\ under base-$10$ expansion,
then $\frc{\log_{10}(K_n)}$ is equidistributed.
\end{cor}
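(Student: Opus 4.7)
The plan is to derive equidistribution directly from strong Benford's Law via a sandwich on the first $s$ decimal digits, rather than invoking Theorem \ref{thm:converse-10}. Fix $\gamma \in (0,1)$ and set $\beta = 10^{\gamma} \in (1,10)$; I will show that the natural density of $B := \{n \in \nat : \frc{\log_{10}(K_n)} < \gamma\}$ exists and equals $\gamma$. The endpoints $\gamma = 0$ and $\gamma = 1$ are immediate.

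The observation to exploit is that if $F_s(K_n) \in [10^{s-1}, 10^{s}-1]$ denotes the integer formed by the first $s$ decimal digits of $K_n$, then the mantissa $10^{\frc{\log_{10}(K_n)}}$ of $K_n$ lies in the half-open interval $[F_s(K_n)/10^{s-1},\, (F_s(K_n)+1)/10^{s-1})$. Comparing these endpoints with $\beta$ produces the sandwich
\begin{equation*}
A_s := \{n : F_s(K_n) \le \lfloor 10^{s-1}\beta\rfloor - 1\}\ \subseteq\ B\ \subseteq\ \{n : F_s(K_n) \le \lfloor 10^{s-1}\beta\rfloor\} =: C_s,
\end{equation*}
valid for every $s \ge 1$. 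I would verify this by elementary inequalities relating $F_s/10^{s-1}$ to $\lfloor 10^{s-1}\beta\rfloor/10^{s-1}$, using integrality of $F_s$ to handle the boundary cleanly.

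Next, strong Benford's Law gives the natural density of $\{n : F_s(K_n) = d\}$ as $\log_{10}(1 + 1/d)$ for every $s$-digit integer $d$; since $A_s$ and $C_s$ are finite disjoint unions of such events, their densities exist and telescope to
\begin{equation*}
\lim_{N \to \infty} \frac{|A_s \cap [1,N]|}{N} = \log_{10}\frac{\lfloor 10^{s-1}\beta\rfloor}{10^{s-1}}, \quad \lim_{N \to \infty} \frac{|C_s \cap [1,N]|}{N} = \log_{10}\frac{\lfloor 10^{s-1}\beta\rfloor + 1}{10^{s-1}}.
\end{equation*}
Both bounds converge to $\log_{10}(\beta) = \gamma$ as $s \to \infty$. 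Inserting them into the sandwich squeezes $\liminf_N |B \cap [1,N]|/N$ and $\limsup_N |B \cap [1,N]|/N$ between the two Benford densities, and letting $s \to \infty$ collapses both to $\gamma$, so the density of $B$ exists and equals $\gamma$.

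I expect no serious obstacle: the argument is elementary, and the only point requiring care is the tightness of the sandwich inclusions, which comes down to careful bookkeeping of floor functions and the behavior of the mantissa near $\beta$. An alternative route through Theorem \ref{thm:converse-10} would first verify that $h_K^*(y) = \log_{10}(9y+1)$ (the inverse of $h_\infty$ for the Benford continuation $h(x) = 10^{x-1}$), then appeal to the theorem for the existence of a uniform continuation $h$ with $\frc{h\Inv(K_n)}$ equidistributed, and finally note that any two uniform continuations sharing the same $h_\infty$ produce sequences $\frc{h\Inv(K_n)}$ that differ by an $o(1)$ perturbation; the direct sandwich is cleaner and more self-contained.
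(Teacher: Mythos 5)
Your sandwich argument is correct, but it is a genuinely different route from the paper's. The paper obtains this corollary as a formal consequence of Theorem \ref{thm:converse-10}: strong Benford's Law says exactly that $h_K^*(y)=\log_{10}(9y+1)$, which is $h_\infty\Inv$ for the Benford continuation $h(x)=10^{x-1}$, and the continuation constructed in the proof of that theorem (see the analogue, Theorem \ref{thm:f-star}) is then precisely $h(x)=10^{x-1}$, so $\frc{h\Inv(K_n)}=\frc{1+\log_{10}(K_n)}=\frc{\log_{10}(K_n)}$ is equidistributed. That route leans on the whole continuation machinery (Zeckendorf expansions of reals in $(0,1)$, the $\limsup$/$\liminf$ estimates in Theorem \ref{thm:f-star}), whereas your proof uses only the definition of strong Benford's Law, the telescoping identity $\sum_{d=10^{s-1}}^{M-1}\log_{10}\frac{d+1}{d}=\log_{10}\frac{M}{10^{s-1}}$, and the inclusion $A_s\subseteq B\subseteq C_s$, which I verified is tight as you state it (integrality of $F_s(K_n)$ gives $B\subseteq C_s$, and $F_s(K_n)+1\le\lfloor 10^{s-1}\beta\rfloor\le 10^{s-1}\beta$ gives $A_s\subseteq B$). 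What your approach buys is self-containment and elementarity, and it sidesteps the one delicate point in the paper's route, namely identifying the constructed uniform continuation with the logarithmic one (your closing remark about two continuations sharing the same $h_\infty$ is exactly the issue). What the paper's approach buys is generality: the same theorem handles arbitrary continuous leading-block distributions, not just the Benford one. Two trivia to tidy up: finitely many $n$ have $K_n$ with fewer than $s$ digits (harmless since $K_n\to\infty$ is implicit in the strong Benford hypothesis), and the paper's Definition \ref{def:fractional-part} uses $\frc{\log_{10}(K_n)}\le\gamma$ rather than $<\gamma$, which you can recover by comparing with $\{n:\frc{\log_{10}(K_n)}<\gamma'\}$ for $\gamma'\downarrow\gamma$.
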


It is interesting that 
 \benlaw\ under base-$b$ expansion arises  within the \zec\ expansion of 
a positive integer, i.e., if we randomly select
an integer $n$ in $[1, F_{m})$,
then the frequency of 
   the \fib\ terms with leading decimal digit $d$ 
   among the summands of the \zec\ expansion of $n$
   is nearly $\log_{10}(1+\frac1d)$ for almost all $n\in [1, F_{m})$.
 This is proved in  \cite{miller-behavior}.
In fact, they prove a result  for attributes that are far more general than leading digits, and
 the result holds for all generalized \zec\ expansions as well.

 Their result immediately applies to a general setup where a generalized \zec\ expansion and base-$10$ expansion are 
 replaced with two arbitrary generalized \zec\ expansions.
 The full statement is found in Theorem 	\ref{thm:Psmu}, and below
 we introduce a specialized version for the binary and \zec\ expansions.
 \begin{theorem} \label{thm:benford-behavior}
Let $S=\set{ \nN : \LB_3(2^{n-1})=(1,0,1)}$, and let $t\in\nat$.
Given $n\in[1,2^t)$,
let $n=\sum_{k\in A_n} 2^k$ be the binary expansion of $n$ where $A_n$ is a finite subset of $\nat$, 
and define $P_t(n)=\#(A_n\cap S)/\#A_n$.

Then, given a   real number 
$\ep>0$, 
the probability of $n\in [1,2^t)$
such that 
$$\abs{P_t(n)-\log_\phi\frac{ \phi}{1+\phi^{-2}}}\ <\  \ep $$ is equal to $1+o(1)$
as a function of $t$.
 \end{theorem}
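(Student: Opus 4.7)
The plan is to specialize Theorem~\ref{thm:Psmu} from \cite{miller-behavior} to this specific instance: the outer expansion is binary, and the attribute attached to a summand $2^k$ is whether $k\in S$. First, I invoke Theorem~\ref{thm:BL-length-1} with $a=2$; since the reindexing $2^{n-1}\leftrightarrow 2^n$ only shifts $S$ by one, the natural density of $S$ in $\nat$ exists and equals $\rho:=\log_\phi\frac{\phi}{1+\phi^{-2}}$. In particular $\#(S\cap[0,t))=\rho t+o(t)$. The task then reduces to showing that for most $n\in[1,2^t)$ the proportion $P_t(n)$ is close to $\rho$.

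The core is a standard concentration estimate. For $n$ chosen uniformly at random from $[0,2^t)$, the binary digits $a_0,\dots,a_{t-1}$ are i.i.d.\ Bernoulli$(1/2)$, and excluding $n=0$ alters frequencies by only $2^{-t}$. Writing
\begin{gather*}
\#A_n \ =\ \sum_{k=0}^{t-1} a_k, \qquad \#(A_n\cap S)\ =\ \sum_{k\in S\cap[0,t)} a_k,
\end{gather*}
each is a sum of independent Bernoulli$(1/2)$ variables, so $E[\#A_n]=t/2$ with variance $t/4$, and $E[\#(A_n\cap S)]=\#(S\cap[0,t))/2=\rho t/2+o(t)$ with variance at most $t/4$. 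Chebyshev's inequality then shows that outside an exceptional subset of $[1,2^t)$ of density $o(1)$ one has $\#A_n=t/2+o(t)$ and $\#(A_n\cap S)=\rho t/2+o(t)$; dividing the two estimates yields $P_t(n)=\rho+o(1)$, which is exactly what is required once $t$ is large enough compared to $\ep$.

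The main obstacle is purely a bookkeeping one: the rate at which $\#(S\cap[0,t))/t$ approaches $\rho$ is governed by the quantitative equidistribution of $\frc{n\log_\phi 2}$, which in turn depends on the irrationality measure of $\log_\phi 2$. For the qualitative statement (the exceptional set having density $o(1)$ for each fixed $\ep>0$), a crude Chebyshev bound together with the mere convergence $\#(S\cap[0,t))/t\to\rho$ suffices; extracting an effective rate of convergence to $1$ in the probability would instead require Weyl-type error estimates on the equidistribution, and this is what the general framework of Theorem~\ref{thm:Psmu} encapsulates.
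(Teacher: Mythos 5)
Your proof is correct, but it takes a genuinely different route from the paper. The paper does not prove Theorem \ref{thm:benford-behavior} directly: it presents it as a special case of Theorem \ref{thm:Psmu}, which in turn is quoted as a corollary of Theorem 1.1 of \cite{miller-behavior-2}; no self-contained argument is given. You instead exploit the one feature special to this instance that the general framework cannot use: because the \emph{outer} expansion is binary, the digits of a uniformly random $n\in[0,2^t)$ are genuinely i.i.d.\ Bernoulli$(1/2)$, so $\#A_n$ and $\#(A_n\cap S)$ are sums of independent indicators and a second-moment (Chebyshev) bound gives concentration of both around $t/2$ and $\rho t/2+o(t)$ respectively, after which a union bound and division finish the proof. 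This is not available for a general periodic \zec\ collection $\mathcal K$ as outer expansion (e.g.\ \zec\ decompositions themselves), where the digit constraints destroy independence and the machinery of \cite{miller-behavior-2} is really needed; so your argument is more elementary but strictly less general than the paper's citation. Two small points to tidy up: (i) the input $\#(S\cap[0,t))=\rho t+o(t)$ should be sourced from Theorem \ref{thm:BL-length-1} (equivalently Corollary \ref{cor:equidistribution-examples}) with $a=2$, noting as you do that the shift $2^{n}\mapsto 2^{n-1}$ does not change the density --- your appeal to this is correct but deserves the explicit citation since it is the only nontrivial arithmetic input; (ii) when you pass from the two separate concentration statements to a statement about the ratio, you should fix $\ep$, choose $\delta$ with $(\rho/2+\delta)/(1/2-\delta)<\rho+\ep$ and $(\rho/2-\delta)/(1/2+\delta)>\rho-\ep$, and intersect the two good events; as written the ``$o(t)$ outside an exceptional set of density $o(1)$'' phrasing blurs the order of quantifiers, though the intended argument is standard and sound.
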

\noindent
Notice that by Theorem \ref{thm:BL-length-1},  $\prob{\nN : n\in S}= \log_\phi\frac{ \phi}{1+\phi^{-2}}$, and hence,
we may say that \benlaw\ under \zec\ expansion arises  within the binary expansion of positive integers.

The remainder of this paper is organized as follows.
In Section \ref{sec:notation}, the notations for sequences and \cf s are introduced.
In Section \ref{sec:benford}, the distribution of leading blocks of exponential sequences under \zec\ expansion is introduced, and 
 \benlaw\ and strong \benlaw\ under \zec\ expansion are declared.
 Introduced in Section \ref{sec:calculations} 
 are the method of calculating the distribution results introduced in Section \ref{sec:benford}, and 
  also the distribution results for monomial sequences $\seQ{n^a}$.
In Section \ref{sec:Other-continuations},
 we introduce a general approach to  the distributions of leading blocks under \zec\ expansion that are 
 different from that of \benlaw.
 The approach establishes the correspondence between the continuations of the \fib\ sequences and the distributions of leading blocks
 under \zec\ expansion.
 In Section \ref{sec:general-\zec},
  we introduce definitions and results that generalize the contents of Sections \ref{sec:benford}, \ref{sec:calculations},
  and \ref{sec:Other-continuations} for generalized \zec\ expansions.
  The absolute \benlaw\ mentioned earlier in this section is properly introduced in Section \ref{sec:general-\zec} as well.
In Section 	\ref{sec:within-expansion},
the Benford behavior introduced in Theorem \ref{thm:benford-behavior} is generalized for the setting of two generalized 
\zec\ expansions.

\section{Notation and definitions}\label{sec:notation}

\begin{notation}\rm
\label{def:N0}
Let $\nat_0:=\nat\cup\set{0}$, and let $\Omega_n :=
\set{k\in \nat : k\le n}$.
For simpler notation, let us use a capital letter for a sequence of numbers, and 
use the infinite tuple notation for listing its values, e.g.,
$Q=(2,4,6,8,\dots)$.
We use the usual subscript notation for individual values, e.g., $Q_3=6$.
\end{notation}

\begin{deF}\label{def:products}\rm 
Tuples $(c_1,c_2,\dots,c_t)\in\nat_0^t$ where $t\in \nat$ are called
{\it \cf s of length $t$}
if $c_1>0$.
If  $\ep$ is a \cf\ of length $t$, we denote the $k$th entry by $\ep(k)$ (if $k\le t$), and its length $t$ by $\len(\ep)$.
For a \cf\  $\ep$, let $\ep *  Q$ denote $\sum_{k=1}^t \ep(k) Q_{t-k+1}$ where $t=\len(\ep)$, and 
let $\ep \cdot Q$ denote  $\sum_{k=1}^t \ep(k) Q_{k}$.

\end{deF}
\noindent
If  $\ep=(4,1,6,2)$ and $Q$ is a sequence, then 
$\ep * Q = 4 Q_4 + Q_3 + 6 Q_2 + 2 Q_1$, and 
$\ep \cdot Q = 4 Q_1 + Q_2 + 6 Q_3 + 2 Q_4$.

\section{Benford's Law for \zec\ expansions}\label{sec:benford}

 Let $a$ and $b$ be two integers $>1$ such that $\gcd(a,b)=1$.
The  sequence  $K$ be the sequence given by $K_n=a^n $  
is a  standard example of sequences that satisfy 
Benford's Law under base-$b$ expansion.
  We shall declare the behavior of the leading digits 
  of the \zec\ expansion of $a^n$ to be Benford's 
  Law under \zec\ expansion.

Let us begin with formulating \zec's Theorem 
  in terms of \cf s.
\begin{deF} \label{def:ome}
\rm
Let $\cE$ be the set of \cf s $\ep$ such that $\ep(k)\le 1$ for all $k\le \len(\ep)$, and $\ep(k) \ep (k+1)=0$   all $k\le \len(\ep)-1$.
Let $F$ be the shifted Fibonacci sequence such that 
$F_{n+2}= F_{n+1} + F_n$ for all $\nN$ and $(F_1,F_2)
=(1,2)$.
Let $\phi$ be
 the golden ratio, let $\ome:=\phi^{-1}$,
 and let $\wh F=(1,\ome,\ome^2,\dots)$ be the sequence given by $\wh F_n=\ome^{n-1  }$. 
\end{deF} 

Recall the product notation from Definition \ref{def:products}.
\begin{theorem}[\cite{zec}, \zec's Theorem]
For each positive integer $n$,  
  there is a unique \cf\ $\ep\in \cE$ such that 
$n=\ep*F$.
\end{theorem}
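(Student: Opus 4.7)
The plan is to prove existence and uniqueness separately. For existence, I will use the greedy algorithm: given $n\ge 1$, let $t$ be the largest integer with $F_t\le n$, set the leading entry $\ep(1)=1$, and recurse on $n-F_t$. The key inequality is that $n < F_{t+1} = F_t + F_{t-1}$ implies $n-F_t < F_{t-1}$, so the next Fibonacci term selected by the greedy step has index at most $t-2$. Translating back to positions in the composition $\ep$, this is exactly the non-adjacency condition $\ep(k)\ep(k+1)=0$. The recursion terminates because $n$ strictly decreases, and the output tuple $\ep$ clearly has $\ep(k)\in\{0,1\}$, $\ep(1)=1$, and hence lies in $\cE$.

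For uniqueness, I will establish the following range lemma: for every $\ep\in\cE$ with $\len(\ep)=t$, we have $F_t \le \ep*F \le F_{t+1}-1$. The lower bound is immediate since $\ep(1)=1$ contributes $F_t$. For the upper bound, the constraints force $\ep*F$ to be maximized by the alternating pattern $\ep=(1,0,1,0,\dots)$, whose value is $F_t+F_{t-2}+F_{t-4}+\cdots$. I will then invoke the telescoping identity $F_t+F_{t-2}+\cdots = F_{t+1}-1$, where the sum terminates at $F_2$ or $F_1$ according to the parity of $t$; this identity is straightforward by induction on $t$ using $F_{t+1}=F_t+F_{t-1}$ and the base cases $F_1=F_2-1$ and $F_2=F_3-1$.

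Once the range lemma is in hand, uniqueness follows cleanly by induction on $n$. Suppose $n=\ep_1*F=\ep_2*F$ with $\ep_1,\ep_2\in\cE$. The lemma forces $\len(\ep_1)=\len(\ep_2)=t$, where $t$ is the unique index with $F_t\le n<F_{t+1}$, and hence both compositions begin with $\ep_i(1)=1$. Subtracting $F_t$ yields $n-F_t < F_{t-1}$ equal to two $\cE$-expansions (the tails of $\ep_1$ and $\ep_2$, with any common prefix of zeros between position $2$ and the next $1$ appropriately re-indexed); by the inductive hypothesis these coincide, hence $\ep_1=\ep_2$. The base case $n=0$ (empty expansion) or $n=1$ (the composition $(1)$) is immediate.

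The main obstacle I anticipate is purely bookkeeping rather than conceptual: the composition notation $\ep*F=\sum_{k=1}^t \ep(k)F_{t-k+1}$ indexes Fibonacci terms in reverse, so I must be careful when translating between the greedy algorithm (which naturally selects $F_t$ first) and the left-to-right order of the composition $\ep$. I also need to take care in the uniqueness induction when the tail $n-F_t$ may be $0$, or when the tails of $\ep_1,\ep_2$ have different numbers of leading zeros after position $1$; the range lemma applied to the normalized tails (stripping leading zeros after the forced $\ep(2)=0$) handles this uniformly.
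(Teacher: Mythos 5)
The paper does not prove this statement: it is Zeckendorf's classical theorem, quoted with a citation to \cite{zec}, so there is no internal proof to compare against. Your argument is the standard one and it is correct. Existence by the greedy algorithm works exactly as you describe: $F_t\le n<F_{t+1}$ gives $n-F_t<F_{t-1}$, which forces the next selected index down by at least two, i.e.\ the non-adjacency condition $\ep(k)\ep(k+1)=0$, and the recursion terminates. For uniqueness, the range lemma $F_t\le \ep*F\le F_{t+1}-1$ for $\ep\in\cE$ of length $t$ is the right key step, and the telescoping identity $F_t+F_{t-2}+\cdots=F_{t+1}-1$ is correct in the paper's shifted indexing ($F_1=1$, $F_2=2$; e.g.\ $F_5+F_3+F_1=8+3+1=12=F_6-1$). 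The one spot that deserves an explicit line is the claim that the alternating block $(1,0,1,0,\dots)$ maximizes $\ep*F$ over $\cE$-blocks of length $t$: either run the induction directly on the maximum $M_t$ (a $1$ in the first slot forces the remainder into the last $t-2$ slots, so $M_t\le F_t+M_{t-2}$, and $M_{t-1}<M_t$ covers the other case), or spell out the exchange argument that moving a $1$ to an earlier admissible position weakly increases the value. Your treatment of the tails in the uniqueness induction is also sound: since the range lemma determines the length $t'$ of the stripped tail from $n-F_t$ alone, both tails carry the same number of leading zeros, so equality of the stripped tails does give $\ep_1=\ep_2$.
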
 
\noindent
Recall the example  $3^5 = F_{12} + F_5 + F_2$.
If  $\ep=(1,0,0,0,0,0,0,1,0,0,1,0)$,
then $\ep\in \cF$ and $3^5 = \ep * F$.

\begin{deF}
\rm \label{def:F-expansion}
The expression $n=\ep * F$ where $n\in\nat$ and $\ep\in\cF$ is called 
{\it the $\cE$-expansion of $n$} or
{\it the \zec\ expansion of $n$}.
\end{deF}

\subsection{\benlaw}
If $\ep\in\cE$ and $\len(\ep)\ge 2$, then $(\ep(1),\ep(2))=(1,0)$ is always the case, and hence,
the probability of having  $(\ep(1),\ep(2))=(1,0)$ is $1$.
For the purpose of demonstration, we consider the first three entries of $\ep$.

To denote arbitrarily many {\it leading blocks of \cf s}, which are defined in Definition \ref{def:F-blocks} below,
 we shall use the boldface font  and subscripts, e.g.,   $\vecb_1$ and $\vecb_2$, and 
 in particular, $\vecb_k$ for $k=1,2$ are not numbers, but tuples.
 The reader must not be confused with the entries of a sequence $Q$, e.g., $Q_1$ and $Q_2$, which are numbers, and we use the regular font for sequences.

\begin{deF} \rm \label{def:F-blocks}
A \cf\ of length $s$ is also called 
{\it a leading block of length $s$} in the context of 
  investigating the frequency of leading blocks,
  and it is denoted with boldface fonts, e.g. $\vecb=(1,0,0,1)\in\cF$,
  $\vecb(3)=0$, and 
  $\vecb(4)=1$.
  Let $\cF_3:=\set{\vecb_1,\vecb_2}$ where 
  $ \vecb_1=(1,0,0),\ \vecb_2=(1,0,1)$ are leading blocks of length $3$, and 
  the set is called 
  {\it the set of leading blocks of length $3$ under $\cF$-expansion}.
  If $\vecb \in \cF_3$ and $\vecb =\vecb_1$, then define $ \wt \vecb:=\vecb_2$, and 
  and if $\vecb \in \cF_3$ and $\vecb =\vecb_2$, then 
  define $ \wt \vecb:= (1,1,0)$.
 
\end{deF}

 The block $\wt \vecb = (1,1,0)$ is not a member of $\cF$, and hence, 
 does not occur as the leading block of an $\cF$-expansion,
 but it's convenient to use for Theorem \ref{thm:fib-distribution},
 where  we rely on the equality $\wt \vecb\cdot (1,\ome^{1},\ome^{2}) = \phi$;
 see Definitions \ref{def:products} and \ref{def:ome}.   
 The block $\wt \vecb$ makes  the statements of Definition \ref{def:F-benford-law} below more aesthetic, 
and the principle of defining an exclusive block such as $ (1,1,0)$ for other generalized \zec\ expansions  will be explained in
 Definition \ref{def:B-tilde} and Section \ref{sec:general-\zec}.

The following is a special version of Corollary 	\ref{cor:equidistribution-examples}, and 
it is Theorem \ref{thm:BL-length-1} written in terms of the dot product and blocks.
Recall the notation $\LB_s$ from Definition \ref{def:LB-F},  
the set $\cF_3$ from Definition \ref{def:F-blocks},  
the sequence $\wh F$ from Definition \ref{def:ome}, and the dot product
from Definition \ref{def:products}.
\begin{theorem}\label{thm:fib-distribution}
Let $K$ be a sequence given by $K_n=a^n$ where $a>1$ is an integer.
Then, given $\vecb\in \cF_3$, 
$$
\prob{n \in \nat : \LB_3(K_n)=\vecb }\ =\ \log_\phi
  	\frac{\wt \vecb\cdot \wh F}{ \vecb \cdot \wh F  }.
  	$$
\end{theorem}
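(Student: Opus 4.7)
The plan is to translate the leading-block condition into a condition on a fractional part and then invoke Weyl's equidistribution theorem. By \zec's Theorem, a positive integer $N$ whose \zec\ expansion begins with $F_M$ satisfies $\LB_3(N)=(1,0,0)$ precisely when $F_M\le N<F_M+F_{M-2}$, and satisfies $\LB_3(N)=(1,0,1)$ precisely when $F_M+F_{M-2}\le N<F_{M+1}$. Substituting $N=a^n$ and using Binet's formula $F_M=\phi^M/\sqrt 5+O(\phi^{-M})$, these bounds become $c_1(M)\le a^n\sqrt 5/\phi^M<c_2(M)$, where $c_1(M)\to\vecb\cdot\wh F$ and $c_2(M)\to\wt\vecb\cdot\wh F$ as $M\to\infty$, with errors of size $O(\phi^{-2M})$. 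One verifies directly that $\vecb\cdot\wh F=1$ and $\wt\vecb\cdot\wh F=1+\ome^2$ when $\vecb=(1,0,0)$, while $\vecb\cdot\wh F=1+\ome^2$ and $\wt\vecb\cdot\wh F=1+\ome=\phi$ when $\vecb=(1,0,1)$.

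Taking $\log_\phi$ and setting $L_n:=n\log_\phi a+\log_\phi\sqrt 5$, the inequality becomes $\frc{L_n}\in I_n$, where $I_n\subset[0,1)$ has endpoints converging to those of $[0,\log_\phi(1+\ome^2))$ for $\vecb=(1,0,0)$ and of $[\log_\phi(1+\ome^2),1)$ for $\vecb=(1,0,1)$. In either case the length of the limiting interval equals $\log_\phi(\wt\vecb\cdot\wh F/\vecb\cdot\wh F)$. To invoke Weyl's theorem on $\frc{L_n}$, I verify that $\log_\phi a$ is irrational whenever $a>1$ is an integer: if $\log_\phi a=p/q$ with $q\ge 1$, then $a^q=\phi^p$, and since $\phi$ is a quadratic irrational over $\ratn$, the quantity $\phi^p$ is rational only when $p=0$, forcing $a^q=1$ and hence $a=1$, a contradiction. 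Weyl's theorem then gives that $\frc{L_n}$ is equidistributed on $[0,1)$.

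The main technical obstacle is the $O(\phi^{-2M})$ error from Binet's formula, which makes $I_n$ depend on $n$ rather than equal its limit. I will handle this with a standard sandwich: given $\del>0$, for all sufficiently large $n$ one has $I^-\subset I_n\subset I^+$, where $I^-,I^+\subset[0,1)$ are fixed intervals whose lengths differ from $\log_\phi(\wt\vecb\cdot\wh F/\vecb\cdot\wh F)$ by at most $\del$. Equidistribution of $\frc{L_n}$ then pins $\prob{n\in\nat:\LB_3(a^n)=\vecb}$ between the lengths of $I^-$ and $I^+$; letting $\del\to 0$ yields the claimed value.
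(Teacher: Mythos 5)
Your proof is correct and follows essentially the same route as the paper's: the paper also converts the leading-block condition into a fractional-part condition on a logarithm (via the continuation $\fF$ and Lemma \ref{lem:LB-inequality-2}, where $\fF\Inv(x)=\log_\phi x-\log_\phi\al+O(1/x^2)$ plays the role of your $L_n$), invokes Weyl's theorem as in Corollary \ref{cor:equidistribution-examples}, and absorbs the $o(1)$ endpoint errors with the same $\limsup$/$\liminf$ sandwich used in Theorem \ref{thm:equidistr-BL}. The only substantive addition on your side is the explicit verification that $\log_\phi a$ is irrational for an integer $a>1$, a hypothesis the paper's corollary assumes but never checks for this case.
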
 
 
Motivated from the distribution of these standard sequences, we introduce the following definition. 

\begin{deF}\label{def:F-benford-law}\rm 

 A sequence $K$ of positive integers is said to {\it satisfy $\cE$-Benford's Law}
  or {\it satisfy Benford's Law under   $\cE$-expansion}
  if given $\vecb\in \cF_3$, 
$$
\prob{n \in \nat : \LB_3(K_n)=\vecb}\ =\ \log_\phi
  	\frac{\wt \vecb\cdot \wh F}{ \vecb \cdot \wh F  }.
  	$$
\end{deF}
\noindent
Let us demonstrate how the structure of the formulas in Definition \ref{def:F-benford-law} compares 
with   the one for base-10 expansion.
Consider the two leading blocks $\vecc_1=(2,1,2)$ and $\vecc_2=(2,1,3)$ for base-$10$ expansion. Let $b=10$.  
Then, strong Benford's Law for decimal expansion  requires the probability of having the leading block $\vecc_1$
to
be $\log_{10}\frac{213}{212}$, which is equal to  
\GGG{
\log_b\frac{ \vecc_2\cdot (1,b^{-1},b^{-2})}{ \vecc_1\cdot (1,b^{-1},b^{-2})}
\ =\ 
\log_b\frac{b^2 \vecc_2\cdot (1,b^{-1},b^{-2})}{b^2 \vecc_1\cdot (1,b^{-1},b^{-2})}
\ =\ \log_b\frac{ \vecc_2\cdot (b^2,b,1)}{ \vecc_1\cdot(b^2,b,1)}\ =\ \log_{10}\frac{213}{212}.
} 
The first expression in terms of the negative powers of $b$ is analogous to the ones 
in Definition \ref{def:F-benford-law}.
 
\subsection{Strong \benlaw}\label{sec:strong-benford}
Under base-$b$ expansion,
  a sequence $K$ is said to satisfy strong Benford's Law if 
  the probability of the first $M$ leading digits of $K_n$ 
  satisfies a certain logarithmic distribution, and 
   exponential sequences $\seQ{a^n}$ where $a>1$ is an integer
  are standard examples   that 
satisfy strong Benford's Law under base-$b$ expansion.
In Corollary \ref{cor:equidistribution-examples}, 
we calculate  the 
 distribution of leading blocks of arbitrary length of 
  the \zec\ expansions of exponential sequence $\seQ{a^n}$.
We declare this distribution to be 
 {\it strong \benlaw\ under \zec\ expansion}.
 We 
 state the formal definition below.

 Recall the convolution $*$ from Definition \ref{def:products}.
\begin{deF}\label{def:B-tilde}\rm 
 
Given an integer $s\ge 2$, let
 $\cF_s :=\set{\vecb_1,\vecb_2,\dots, \vecb_\ell}$ be the finite set of 
 the leading  blocks of length $s$ occurring in the $\cF$-expansions of the positive integers such that
  $1+\vecb_k*F=\vecb_{k+1}*F$ for all $k\le \ell-1$. The leading block $\vecb_\ell$ is called 
  {\it the largest leading block of length $s$ under $\cF$-expansion}.
  
  If $s$ is even, then let $\vecb_{\ell+1}:= (1,0,1,0,\dots,1,0,1,1)$, and 
if $s$ is odd, then it  is  $\vecb_{\ell+1}:= (1,0,1,0,\dots,1,1,0)$.
 If $\vecb=\vecb_k\in \cF_s$, then we denote $\vecb_{k+1}$ by $\wt \vecb$.

\end{deF}
 \noindent 
Notice that the existence of $\wt \vecb$ defined above is guaranteed by 
\zec's Theorem.
Let us demonstrate   examples of $\vecb$ and $\wt \vecb$.
Let $\vecb=(1,0,0,0,1,0)\in\cF_6$.  Then, $\wt \vecb=(1,0,0,1,0,0)\in\cF_6$, and 
$1+\vecb*F = \wt \vecb*F$.
If we list the \cf s  in $\cF_6$ with respect to the \lex\ order,
then $\wt \vecb$ is the immediate successor of $\vecb$ if $\vecb\ne (1,0,1,0,1,0)$.

For each case of $s$ being even or odd,  
the largest leading block $\vecb$ of length $s$ satisfies $1+ \vecb * F = \wt \vecb * F$.
If $\vecb'=(1,0,1,0,1,0)$,
then  $\wt \vecb'=(1,0,1,0,1,1)$, and below we shall demonstrate that 
the  equality $\wt \vecb' \cdot \wh F =\sum_{k=0}^2 \ome^{ 2k} + \ome^{ 5} = \phi$
makes the sum of the probabilities in Theorem 
\ref{thm:equidistribution-examples} and 
Definition \ref{def:strong-benford} be $1$.

Let us compare this setup with the case of base-$10$ expansion.
Let $\vecc=(4,5,6,7,8,9)$ be the leading block of length 6
for base-$10$ expansion, and 
let  the sequence $H$ given by $H_n= 10^{n-1} $ be the \lq\lq base\rq\rq\ sequence.
Then, $1 + \vecc * H = \wt \vecc * H$ where 
$\wt \vecc =  (4,5,6,7,9,0)$.
If we list all the \cf s of length 6, with respect to the \lex\ order, that are legal for base-10 expansion, 
then $\wt \vecc$ is  the immediate successor of $\vecc$.
If $\vecc'=(9,10,9,9,9,9)$, then we let $\wt \vecc'=(9,10,0,0,0,0)$,
and $\sum_{n=1}^6 \wt \vecc'(n) 10^{n-1}=1+\vecc'*H=10^6$.
If strong \benlaw\ under base-10 expansion is satisfied,
the probability of having the leading block $\vecc'$ 
under base-10 expansion is 
$$\log_{10} 
\frac{\wt \vecc'*   H}{\vecc' * H}\ =\ \log_{10} 
\frac{\wt \vecc'\cdot \wh H}{\vecc' \cdot \wh H}\ =\ 1 - \log_{10}\vecc'\cdot \wh H
$$
where $\wh H$ is the sequence given by $\wh H_n = 10^{-(n-1)}$.

Recall the sequence $\wh F$ from Definition \ref{def:ome}.
\begin{theorem}\label{thm:equidistribution-examples}
Let $K$ be a sequence of positive integers given by
 $K_n= a b^n(1+o(1))$ where $a$ and $b$ are positive real numbers such that $\log_\phi b$ is irrational.
Then,  given  $\vecb\in\cF_s$ where $s\ge 2$,
$$\prob{\nN : \LB_{ s}(K_n) = \vecb} 
\ =\  \log_\phi\frac{ \wt \vecb \cdot \wh F}{  \vecb \cdot \wh F}.$$
\end{theorem}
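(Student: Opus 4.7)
The plan is to reduce the statement to an application of Weyl's equidistribution theorem to the sequence $\{n\log_\phi b\}$, using Binet's formula for the shifted Fibonacci sequence $F_n = \tfrac{1}{\sqrt5}\phi^{n+1} + O(\phi^{-n})$. First, I would translate the combinatorial condition $\LB_s(K_n)=\vecb$ into an analytic one. If $M=M(n)$ denotes the length of the $\cF$-expansion of $K_n$ (so $F_M\le K_n<F_{M+1}$), then by the way $\wt\vecb$ is constructed in Definition \ref{def:B-tilde},
\[
\LB_s(K_n)=\vecb\ \Longleftrightarrow\ \sum_{k=1}^s\vecb(k)F_{M-k+1}\ \le\ K_n\ <\ \sum_{k=1}^s\wt\vecb(k)F_{M-k+1},
\]
where in the special case $\vecb=\vecb_\ell$ the right-hand endpoint equals $F_{M+1}$, matching the boundary between Zeckendorf lengths. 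This is just the statement that the blocks in $\cF_s$, together with the sentinel $\wt\vecb_\ell$, enumerate the consecutive ``top-$s$ configurations'' of a Zeckendorf expansion of length $M$.

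Next I would insert Binet's formula. Factoring out $\phi^{M+1}/\sqrt5$ from each $F_{M-k+1}$ and letting $c=\log_\phi(1/\sqrt5)$, the double inequality above becomes
\[
M+1+c+\log_\phi(\vecb\cdot\wh F)+o(1)\ \le\ \log_\phi K_n\ <\ M+1+c+\log_\phi(\wt\vecb\cdot\wh F)+o(1),
\]
where the $o(1)$ is uniform in $\vecb$ and goes to $0$ as $M\to\infty$ (equivalently, as $n\to\infty$, since $K_n\to\infty$). Setting $y_n:=\log_\phi K_n-1-c$, the bound $F_M\le K_n<F_{M+1}$ forces $M=\lfloor y_n\rfloor$ for all large $n$, so the condition $\LB_s(K_n)=\vecb$ is equivalent to
\[
\{y_n\}\ \in\ \bigl[\log_\phi(\vecb\cdot\wh F),\ \log_\phi(\wt\vecb\cdot\wh F)\bigr)
\]
up to an $o(1)$ perturbation of the endpoints; note that $\vecb_1\cdot\wh F=1$ and $\wt\vecb_\ell\cdot\wh F=\phi$ (the identity $1+\omega=\phi$ plus the defining property of $\wt\vecb_\ell$), so these subintervals partition $[0,1)$.

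Now the hypothesis $K_n=ab^n(1+o(1))$ gives $y_n=n\log_\phi b+\log_\phi a-1-c+o(1)$. Because $\log_\phi b$ is irrational, Weyl's equidistribution theorem implies that $\{n\log_\phi b\}$, and hence $\{y_n\}$, is equidistributed on $[0,1)$. The $o(1)$ shift of both the sequence and the interval endpoints is irrelevant since the relevant intervals have measure-zero boundaries, so the asymptotic frequency of indices $n$ with $\LB_s(K_n)=\vecb$ equals the Lebesgue measure of the interval, namely $\log_\phi(\wt\vecb\cdot\wh F)-\log_\phi(\vecb\cdot\wh F)=\log_\phi\tfrac{\wt\vecb\cdot\wh F}{\vecb\cdot\wh F}$.

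The main technical step is the careful bookkeeping in Step 1: I need to verify that incrementing the length-$s$ leading block of any $\cF$-expansion of length $M\ge s$ is correctly modeled by $\vecb\mapsto\wt\vecb$, including the boundary case $\vecb=\vecb_\ell$ where the increment overflows into an expansion of length $M+1$. That this bookkeeping is consistent with the identity $\wt\vecb_\ell\cdot\wh F=\phi$ is exactly what makes the probabilities in $\cF_s$ sum to $1$, and it is also what lets the argument treat the largest block uniformly with the others rather than as an exceptional case.
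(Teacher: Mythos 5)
Your proposal is correct and follows essentially the same route as the paper: the paper factors the argument into Lemma \ref{lem:LB-inequality-2} (leading block $\vecb$ $\Leftrightarrow$ $\frc{\fF\Inv(K_n)}$ lies in $[\log_\phi(\vecb\cdot\wh F),\log_\phi(\wt\vecb\cdot\wh F))$ up to $o(1)$), Theorem \ref{thm:equidistr-BL} (the $\limsup/\liminf$ sandwich you wave at with ``measure-zero boundaries''), and Corollary \ref{cor:equidistribution-examples} (Weyl applied to $n\log_\phi b$). The only cosmetic difference is that you work directly with $\log_\phi K_n$ plus the constant $c$, whereas the paper routes this through the Benford continuation $\fF\Inv(x)=\log_\phi(x)-\log_\phi\al+O(1/x^2)$ of Lemma \ref{lem:inverse}; these agree up to $o(1)$, so the arguments coincide.
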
 

\begin{proof}
It follows immediately from Corollary \ref{cor:equidistribution-examples}.
\end{proof}
Let us demonstrate below that the probabilities add up to $1$ for $s=6$, but 
the argument is sufficiently general to be extended for all cases of $s$.
Let $ \cF_6=\set{\vecb_1,\dots,\vecb_\ell}$  such that 
$\vecb_{k+1} = \wt \vecb_k$ for all $1\le k\le \ell$.
Then, $\vecb_1=(1,0,0,0,0,0)$ and 
$\vecb_\ell=(1,0,1,0,1,0)$.
Then, $\vecb_{\ell+1}=(1,1,0,0,0,0)$, and 
\GGG{
\sum_{k=1}^\ell  \log_\phi\frac{ \wt \vecb_k \cdot \wh F}{  \vecb_k \cdot \wh F}
\ =\ \sum_{k=1}^\ell  \log_\phi(  \vecb_{k+1} \cdot \wh F)
-  \log_\phi(\vecb_k \cdot \wh F)\ =\ \log_\phi(\vecb_{\ell+1} \cdot \wh F)
-\log_\phi 1\ =\ 1.
}

\begin{deF}\label{def:strong-benford}
\rm
 Let $K $ be a   sequence of positive integers approaching $\infty$.  
Then, $K$ is said to {\it satisfy strong Benford's Law under   $\cE$-expansion}
if given  $\vecb\in\cF_s$ where $s\ge 2$,
$$\prob{\nN : \LB_{s}(K_n) = \vecb} 
\ =\ \log_\phi\frac{ \wt \vecb \cdot \wh F}{  \vecb \cdot \wh F}.$$
\end{deF}

\begin{example}\label{exm:block-length-6}\rm
Let $K$ be a sequence satisfying strong Benford's Law
under $\cF$-expansion, e.g., $\seQ{2^n}$; see 
Theorem \ref{thm:equidistribution-examples}.
Let $\vecb=(1,0,0,0,1,0)$, so $\wt \vecb=(1,0,0,1,0,0)$.
Then, 
$$\prob{\nN : \LB_{6}(K_n) = \vecb}
\ =\ \log_\phi \frac{1+\ome^{3}}{1+\ome^{4}}\approx
0.157.$$
 
\end{example}

\section{Calculations}\label{sec:calculations}
Notice that $\log_{b}(x)$ makes 
it convenient to calculate the distribution of 
the leading digits of exponential sequences $\seQ{a^n}$ under base-$b$ expansion where $b>1$
is an integer.
In this section, we introduce an analogue of 
$\log_b(x)$ for \zec\ expansion in Section 
\ref{sec:continuation}, and use it for various calculations.

As mentioned in the introduction, 
 these functions are merely a tool
 for calculating the leading digits, and 
 in Section \ref{sec:Other-continuations},
  we consider other continuations, and demonstrate  
  their connections to
  	different distributions of leading digits.

 \subsection{An analytic continuation  of the \fib\ sequence}
 \label{sec:continuation}

Below we introduce an analytic continuation of the \fib\ sequence.
 
 \begin{deF}\rm \label{def:alpha}
Let $\al=\frac{\phi}{\sqrt 5}$, and define
$\fF : \real \to \real $ be the function given by 
 $$\mathfrak F(x) =\al(\phi^x+\phi^{-x}\cos(\pi x)\phi^{-2})
.$$
We call the function {\it a Benford continuation of the 
\fib\ sequence}.
\end{deF}
\noindent
Notice that 
$F_n =\frac{1}{\sqrt 5}(\phi^{n+1} -(-1/\phi)^{n+1}) =\frac{\phi}{\sqrt 5}(\phi^n +(-1)^{n}\phi^{-(n+2)})$.
 Thus, $\fF$ 
 is a real   analytic continuation of $F_n$, so 
 $\mathfrak F(n)=F_n$ for all $\nN$.
 It is  an increasing function on $[1,\infty)$.
Let $\mathfrak F\Inv$ denote the inverse function of $\mathfrak F : [1,\infty) \to \real$.
Comparing it with the case of base-$10$ expansion, 
we find that $10^{x-1}$ is an analytic continuation of 
the sequence $\seQ{10^{n-1}}$, and 
its inverse is $1+\log_{10}(x)$, which 
is the main object for the equidistribution for Benford's Law
under base-$10$ expansion.
The equidistribution property described in Theorem 
\ref{thm:equidistr-BL} is associated with strong
\benlaw\ under $\cF$-expansion, 
and   the name of the function is due to this connection.

\begin{lemma}\label{lem:inverse}
For real numbers $x\ge 1$, we have $ \fF(x)=\al \phi^x +O(\phi^{-x})$,
and 
$$\fF\Inv(x)\ =\  \log_\phi (x) - \log_\phi(\al)  + O(1/x^2).$$
\end{lemma}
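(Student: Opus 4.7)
The plan is to treat both claims as direct consequences of expanding $\fF(x)$ and then inverting the expansion carefully.

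For the first claim, I would simply distribute the $\al$ in Definition \ref{def:alpha} to write
\[
\fF(x) \;=\; \al\phi^{x} + \al\phi^{-2}\cos(\pi x)\,\phi^{-x}.
\]
Since $|\cos(\pi x)|\le 1$ and $\al\phi^{-2}$ is a fixed constant, the second summand is bounded in absolute value by $\al\phi^{-2}\cdot\phi^{-x}$, and so it contributes $O(\phi^{-x})$. This already gives the asymptotic $\fF(x)=\al\phi^{x}+O(\phi^{-x})$ for $x\ge 1$.

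For the second claim, I would invert the expansion above. Set $y=\fF(x)$ with $x\ge 1$, and factor out the dominant term:
\[
y \;=\; \al\phi^{x}\bigl(1 + \phi^{-2x-2}\cos(\pi x)\bigr).
\]
Taking $\log_\phi$ of both sides yields
\[
\log_\phi y \;=\; \log_\phi\al + x + \log_\phi\bigl(1 + \phi^{-2x-2}\cos(\pi x)\bigr).
\]
Because $|\phi^{-2x-2}\cos(\pi x)|\le \phi^{-2x-2}$ is small for $x\ge 1$, the Taylor expansion $\log_\phi(1+u)=u/\ln\phi+O(u^{2})$ gives that the last term is $O(\phi^{-2x})$. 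Rearranging,
\[
x \;=\; \log_\phi y - \log_\phi\al + O(\phi^{-2x}).
\]
The final step is to convert the error $O(\phi^{-2x})$ into the stated $O(1/y^{2})$. From the first claim we have $y=\al\phi^{x}(1+o(1))$, hence $\phi^{x}\asymp y$ and therefore $\phi^{-2x}=O(1/y^{2})$. Substituting $x=\fF\Inv(y)$ and renaming the dummy variable $y$ back to $x$ yields
\[
\fF\Inv(x) \;=\; \log_\phi(x) - \log_\phi(\al) + O(1/x^{2}),
\]
as claimed.

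There is no real obstacle here; the only point that requires a bit of care is the conversion of the error term from the $x$-scale (where it is $O(\phi^{-2x})$) to the $y$-scale (where it is $O(1/y^{2})$), and this is immediate from the first part of the lemma. Both parts of the lemma are essentially corollaries of the fact that the $\phi^{-x}\cos(\pi x)\phi^{-2}$ term in Definition \ref{def:alpha} is exponentially smaller than the $\phi^{x}$ term on $[1,\infty)$.
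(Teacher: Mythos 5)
Your proof is correct and follows essentially the same route as the paper's: both isolate the dominant term $\al\phi^{x}$, absorb the oscillating correction via $\log_\phi(1+u)=O(u)$ with $u=O(\phi^{-2x})$, and convert the error to the $O(1/x^{2})$ scale using the first part of the lemma. The only cosmetic difference is that you factor out $\al\phi^{x}$ before taking logarithms, whereas the paper writes $y=\al\phi^{x}+w$ and expands $\log_\phi(y-w)$ afterward.
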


\begin{proof}
Let $y
 =\al \phi^x +\al\phi^{-x}\cos(\pi x)\phi^{-2}$ and 
 let $w=\al\phi^{-x} \cos(\pi x) \phi^{-2}=O(\phi^{-x})$.
Since $y = \al \phi^x + o(1)$, we have 
 $w = O(1/y)$.
 Then, $ y
 =\al \phi^x +w$ implies
 \AAA{
x &\ =\  \log_\phi(y - w) - \log_\phi\al
\ =\ \log_\phi(y) - \log_\phi\al + \log_\phi(1-w/y)\\
&\ =\ \log_\phi(y) - \log_\phi\al + O(\abs w/y) 
\ =\ \log_\phi(y) - \log_\phi\al + O(1/y^2).
  }

\end{proof}

\subsection{Equidistribution}
Recall the set $\cF_s$ of leading blocks from Definition \ref{def:B-tilde}.
In this section,  having a leading block $\vecb\in\cF_s$ is interpreted in terms of 
the fractional part of the values of  $\fF\Inv$.
\begin{deF}\rm \label{def:vert}
Given $\ep\in  \nat_0^t$ and an integer $s\le t$,
let $\ep\vert s:=(\ep(1),\dots, \ep(s))$.
\end{deF}
Recall $\wh F$ from Definition \ref{def:ome} and the product notation
from Definition \ref{def:products}.
\begin{lemma}\label{lem:LB-inequality-2}
Let $K$ be a sequence of positive real numbers approaching $\infty$, and let $s$ be an integer $\ge 2$.
Let $\vecb\in\cF_s$, and
  let  $A_\vecb:=\set{\nN : \LB_{s}(K_n)=\vecb }$.
Then, there are real numbers $\gamma_n=o(1)$ and $\wt \gamma_n=o(1)$
such that 
  $n\in A_\vecb$ if and only if
\begin{equation}
\log_\phi
  	  \vecb \cdot \wh F +\gamma_n\ \le\ \frc{\fF\Inv(K_n)}\ <\ \log_\phi
  	  \wt \vecb\cdot \wh F + \wt\gamma_n \label{eq:BL-inequality}
\end{equation}  
  	  where $ \wt\gamma_n=0$ if $\vecb$ is the largest block of length $s$.
\end{lemma}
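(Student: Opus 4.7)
The plan is to translate $\LB_s(K_n) = \vecb$ into an explicit range of integer values for $K_n$, apply the increasing function $\fF\Inv$, and then read off the asymptotic bounds on $\{\fF\Inv(K_n)\}$ using Lemma \ref{lem:inverse} and Binet's formula.

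For each sufficiently large $n$, let $M = M(n)$ denote the length of the $\cF$-expansion of $K_n$, so that $F_M \leq K_n < F_{M+1}$. Writing $V(\vecc, M) := \sum_{k=1}^{s} c_k F_{M-k+1}$ for any tuple $\vecc=(c_1,\ldots,c_s)$, I would first verify the combinatorial equivalence
\[
\LB_s(K_n) = \vecb \quad \Leftrightarrow \quad V(\vecb, M) \leq K_n < V(\wt\vecb, M).
\]
The lower bound is immediate; for the upper bound one counts the legal Zeckendorf tails at positions $s+1,\ldots,M$: if $b_s = 0$, the tail ranges over $[0, F_{M-s+1})$, while if $b_s = 1$, the tail ranges over $[0, F_{M-s})$ because position $s+1$ is forced to vanish. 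In each sub-case a short Fibonacci calculation (using $F_j + F_{j-1} = F_{j+1}$) identifies $V(\vecb, M)$ plus the tail range-size with $V(\wt\vecb, M)$; when $\vecb$ is the largest block of $\cF_s$, one instead checks directly from Definition \ref{def:B-tilde} that $V(\wt\vecb, M) = F_{M+1}$.

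Because $\fF$ is strictly increasing with $\fF(M) = F_M$ and $\fF(M+1) = F_{M+1}$, we obtain $M \leq u_n := \fF\Inv(K_n) < M+1$, so $\{u_n\} = u_n - M$. Applying $\fF\Inv$ to the displayed inequalities and subtracting $M$ rewrites the condition as
\[
\fF\Inv(V(\vecb, M)) - M \leq \{u_n\} < \fF\Inv(V(\wt\vecb, M)) - M.
\]

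For the asymptotic identification I would invoke Binet's formula $F_j = \al \phi^j + O(\phi^{-j})$ to obtain $V(\vecb, M) = \al \phi^M (\vecb \cdot \wh F) + O(\phi^{s-M})$, and then apply Lemma \ref{lem:inverse} to conclude
\[
\fF\Inv(V(\vecb, M)) - M = \log_\phi(\vecb \cdot \wh F) + O(\phi^{-2M}).
\]
Defining $\gamma_n$ to be this error and $\wt\gamma_n$ analogously gives $\gamma_n, \wt\gamma_n = o(1)$, since $M(n) \to \infty$ as $K_n \to \infty$. In the largest-block case, the identity $V(\wt\vecb, M) = F_{M+1}$ forces $\fF\Inv(V(\wt\vecb, M)) - M = 1 = \log_\phi \phi = \log_\phi(\wt\vecb \cdot \wh F)$ exactly, so $\wt\gamma_n = 0$ as claimed. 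The main obstacle is the careful bookkeeping among the three sub-cases ($b_s=0$, $b_s=1$ non-largest, and largest block) needed to confirm that the upper endpoint of the integer interval is uniformly $V(\wt\vecb, M)$; once that combinatorial fact is in hand, the analytic estimates follow routinely from Lemma \ref{lem:inverse}.
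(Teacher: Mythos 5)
Your proposal is correct and follows essentially the same route as the paper's proof: both translate $\LB_s(K_n)=\vecb$ into the two-sided bound $\ep*F\le K_n<\check\ep*F$ (your $V(\vecb,M)\le K_n<V(\wt\vecb,M)$), then apply $\fF\Inv$, Binet's formula, and Lemma \ref{lem:inverse} to identify the endpoints as $M+\log_\phi(\vecb\cdot\wh F)+o(1)$, with the largest-block case handled via $V(\wt\vecb,M)=F_{M+1}$. The only difference is cosmetic: you spell out the tail-counting argument for the combinatorial equivalence, which the paper asserts with less detail.
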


\begin{proof}
Suppose that $\nN$ is sufficiently large, so that $\vecb':=\LB_s(K_n)$ exists.
By \zec's Theorem, there is $\mu\in\cF$ such that 
$K_n=\mu*F$, so $m:=\len(\mu) \ge s$, and  $\vecb'=\mu\vert s $.
There are $\ep\in\cF$ of length $m$ and a \cf\ $\check\ep$ of length $m$ such that $\ep\vert s=\vecb'$, 
$\check\ep\vert s =\wt \vecb'$, $\ep(k)=\check\ep(k)=0$ for all $k>s$,
so $\ep*F \le K_n <\check \ep *F$.
Recall $\al$ from Definition \ref{def:alpha}.
Then,
\GGG{
 \ep * F \ =\ \al \sum_{k=1}^s\ep(k)\phi^{m-k+1}+O(1) 
\ =\  \al \phi^m(1+o(1))\sum_{k=1}^s\ep(k)\ome^{k-1}
\ =\  \al \phi^m(1+o(1))\,\vecb'\cdot \wh F. \\
 \intertext{By Lemma \ref{lem:inverse},}
 \fF\Inv(\ep*F)\ =\  m+\log_\phi (\vecb'\cdot \wh F) +\gamma_n,\quad \gamma_n\ =\ o(1).
} 
Similarly, we have $\fF\Inv(\check\ep*F)= m+\log_\phi (\wt \vecb'\cdot \wh F) +\wt\gamma_n$
where $\wt\gamma_n=o(1)$.
If $\vecb'$ is the largest block of length $s$, then $\check\ep * F = F_{m+1}$, and hence,
$\fF\Inv(\check\ep*F)=m+1$, which implies $\wt\gamma_n=0$.
In general, $\check \ep * F\le F_{m+1}$, so $\fF\Inv(\check\ep * F)\le m+1$.

Thus, if $n\in A_\vecb$, then   $\vecb'=\vecb$, and 
\GGG{
\ep * F \le K_n \ <\ \check \ep * F 
\implies
\fF\Inv(\ep * F) \le\fF\Inv( K_n )\ <\ \fF\Inv( \check \ep * F)\\
\implies
\log_\phi
  	  \vecb \cdot \wh F +\gamma_n \ \le\ \frc{\fF\Inv(K_n)}\ <\  \log_\phi
  	  \wt \vecb\cdot \wh F + \wt\gamma_n.
}
There is no difficulty in reversing this argument, and we leave  the proof of the converse to the reader.    
\end{proof}

\begin{theorem}\label{thm:equidistr-BL}
Let $K $ be an increasing  sequence of positive integers such that 
 $\frc{\mathfrak F\Inv(K_n)}$  is equidistributed.
Then, $K$ satisfies strong Benford's Law under the $\cE$-expansion.  
\end{theorem}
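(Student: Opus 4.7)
The plan is to apply Lemma \ref{lem:LB-inequality-2} to translate the event $\LB_s(K_n)=\vecb$ into a condition on the fractional part $\frc{\fF\Inv(K_n)}$, and then invoke the equidistribution hypothesis via a standard squeeze on the endpoints. Fix $s\ge 2$ and $\vecb\in\cF_s$, and set $L:=\log_\phi(\vecb\cdot \wh F)$ and $U:=\log_\phi(\wt\vecb\cdot \wh F)$. Using $\vecb(1)=1$ we get $\vecb\cdot\wh F\ge 1$, so $L\ge 0$; and using the identity $\ome+\ome^2=1$ together with Definition \ref{def:B-tilde} one checks that $\wt\vecb\cdot\wh F\le \phi$, so $U\le 1$, with equality precisely when $\vecb$ is the largest leading block of length $s$. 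In particular $0\le L<U\le 1$.

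By Lemma \ref{lem:LB-inequality-2}, there exist $\gamma_n, \wt\gamma_n = o(1)$ (with $\wt\gamma_n=0$ when $\vecb$ is the largest block of length $s$) such that $n\in A_\vecb := \{n\in\nat : \LB_s(K_n)=\vecb\}$ if and only if
$$L + \gamma_n \ \le\ \frc{\fF\Inv(K_n)}\ <\ U + \wt\gamma_n.$$
Given $\ep>0$, for all $n$ past some $N_\ep$ we have $|\gamma_n|,|\wt\gamma_n|<\ep$. Hence, up to a finite set of exceptions,
$$\{n : \frc{\fF\Inv(K_n)}\in[L+\ep,\,U-\ep]\}\ \subseteq\ A_\vecb\ \subseteq\ \{n : \frc{\fF\Inv(K_n)}\in[L-\ep,\,U+\ep]\cap[0,1)\}.$$
By the hypothesis that $\frc{\fF\Inv(K_n)}$ is equidistributed, the natural densities of the outer and inner sets converge to $(U-L)+2\ep$ and $(U-L)-2\ep$, respectively, as more terms are considered. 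Therefore the liminf and limsup of the density of $A_\vecb$ both lie within $2\ep$ of $U-L$, and letting $\ep\to 0$ gives
$$\prob{n\in\nat : \LB_s(K_n)=\vecb}\ =\ U-L\ =\ \log_\phi\frac{\wt\vecb\cdot\wh F}{\vecb\cdot\wh F},$$
which is precisely the identity required by Definition \ref{def:strong-benford}.

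The argument has no conceptual obstacle; the only care needed is at the boundary case $U=1$, i.e., when $\vecb$ is the largest block of length $s$. There the upper cutoff $U+\wt\gamma_n=1$ is automatic because $\wt\gamma_n=0$ by the lemma, and the calculation reduces to the one-sided density $\prob{\frc{\fF\Inv(K_n)}\ge L+o(1)}=1-L$, still equal to $U-L$. The increasing-integer hypothesis on $K$ is used only to ensure $K_n\to\infty$, which is needed so that $\LB_s(K_n)$ is defined for all sufficiently large $n$ and Lemma \ref{lem:LB-inequality-2} applies.
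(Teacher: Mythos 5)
Your proposal is correct and follows essentially the same route as the paper's proof: both translate $\LB_s(K_n)=\vecb$ via Lemma \ref{lem:LB-inequality-2} into membership of $\frc{\fF\Inv(K_n)}$ in an interval with $o(1)$ endpoint perturbations, then squeeze the density between inner and outer intervals of width $U-L\mp 2\ep$ using the equidistribution hypothesis (the paper writes $1/t$ where you write $\ep$). Your explicit treatment of the boundary case $U=1$ and the observation that $\wt\gamma_n=0$ there match the paper's setup as well.
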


\begin{proof} 
Notice that $\prob{\nN : \LB_s(K_n)=\vecb}$ where $s\ge 2$ is 
equal to the probability of $n$ satisfying (\ref{eq:BL-inequality}).
Let $t\in\nat$.  Then, there is an integer $M_t$ such that $\abs{\gamma_n}$
and $\abs{\wt\gamma_n}$ are $<1/t$ for all $n\ge M_t$.
Thus, by Lemma \ref{lem:LB-inequality-2},
\GGG{
\prob{k \in \Omega_n :  \LB_s(K_n)=B} +o(1)\HSW{.5}\\
\le\  \proB{k \in \Omega_n : \log_\phi
  	  \vecb \cdot \wh F-\tfrac1t\ \le\ \frc{\fF\Inv(K_n)}\ <\ \log_\phi
  	  \wt \vecb\cdot \wh F + \tfrac1t  }+o(1)\\
  	  \implies\ 
\limsup_n \prob{k \in \Omega_n :  \LB_s(K_n)=\vecb}
\ \le\  \log_\phi\frac{\wt \vecb\cdot \wh F}{  \vecb\cdot \wh F} +\frac2t.\\
\prob{k \in \Omega_n :  \LB_s(K_n)=\vecb} +o(1)\HSW{.5}\\
\ge\ \prob{k \in \Omega_n : \log_\phi
  	  \vecb \cdot \wh F+\tfrac1t \le\frc{\fF\Inv(K_n)}\ <\  \log_\phi
  	  \wt \vecb\cdot \wh F - \tfrac1t  }+o(1)\\
  	  \implies\  
\liminf_n\ \prob{k \in \Omega_n :  \LB_s(K_n)=\vecb}
\ \ge\ \log_\phi\frac{\wt \vecb\cdot \wh F}{  \vecb\cdot \wh F}-\frac2t.
  	  }
Since $\liminf$ and $\limsup$ are independent of $t$,
we prove that $\prob{\nN : \LB_s(K_n)=\vecb}=\log_\phi\frac{\wt \vecb\cdot \wh F}{  \vecb\cdot \wh F}$.

\end{proof}
\noindent
The converse of Theorem \ref{thm:equidistr-BL} is true as well, i.e.,
if $K$ satisfies
strong \benlaw\ under $\cF$-expansion, then
$\frc{\fF\Inv(K_n)}$ is equidistributed.
We shall prove it in Section \ref{sec:Other-continuations}. 

The following lemma is useful, and it is probably known.
\begin{lemma}\label{lem:o(1)}
Let $h : \nat \to \real$ be a function such that $\frc{h(n)}$ is equidistributed, and 
let $E : \nat \to \real$ be a function such that $E(n)\to 0$ 
as $n\to \infty$.
Then, $\frc{ h(n) + E(n) }$ is equidistributed.
\end{lemma}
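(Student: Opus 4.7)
The plan is a squeeze argument that uses the equidistribution of $\frc{h(n)}$ to control the measure of the set on which the small perturbation $E(n)$ could alter whether $\frc{\cdot}\le\beta$.

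Fix $\beta\in[0,1]$. Given $\epsilon>0$, I would choose $N$ so that $\abs{E(n)}<\epsilon$ for all $n\ge N$, and then observe that, for such $n$, the events $\frc{h(n)+E(n)}\le\beta$ and $\frc{h(n)}\le\beta$ can disagree only when $\frc{h(n)}$ is within $\epsilon$ of $\beta$ (so that adding $E(n)$ pushes it across $\beta$) or when $\frc{h(n)}$ is within $\epsilon$ of $0$ or $1$ (so that adding $E(n)$ causes wraparound across an integer). In either case $\frc{h(n)}$ lies in a finite union of subintervals of $[0,1)$ of total length at most $4\epsilon$. By the equidistribution of $\frc{h(n)}$, the density of this exceptional set is at most $4\epsilon$, so both the $\liminf$ and $\limsup$ (as the cutoff tends to $\infty$) of the finite proportions for $\frc{h(n)+E(n)}\le\beta$ differ from $\beta=\prob{\nN:\frc{h(n)}\le\beta}$ by at most $4\epsilon$. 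Letting $\epsilon\to 0$ forces the two to coincide and equal $\beta$.

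The only technical subtlety is the wraparound at integer boundaries: even when $\beta$ is strictly between $0$ and $1$, a fractional part of $h(n)$ just below $1$ can, upon addition of a small positive $E(n)$, jump to just above $0$, and symmetrically for negative $E(n)$. This is precisely why the exceptional set must contain an $\epsilon$-neighborhood of $0$ (equivalently $1$), not merely of $\beta$. Once this wraparound contribution is correctly included, the remainder is routine, and the edge cases $\beta\in\{0,1\}$ are absorbed by the same $4\epsilon$ bound.
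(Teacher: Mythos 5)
Your proof is correct: the squeeze argument is complete, the exceptional set $(\beta-\epsilon,\beta+\epsilon)\cup[0,\epsilon)\cup(1-\epsilon,1)$ of measure at most $4\epsilon$ is exactly what is needed, and you correctly identify and handle the wraparound at integer boundaries, which is the only place such an argument can silently fail. The paper itself offers no proof of this lemma (it merely remarks that the statement ``is probably known''), so there is nothing to compare against; your argument is the standard one and fills the gap the authors left to the reader.
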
 

\begin{cor}\label{cor:equidistribution-examples}
Let $K$ be a sequence of positive integers given by
 $K_n= a b^n(1+o(1)) $ where $a$ and $b$ are positive real numbers such that $\log_\phi b$ is irrational.
Then,   $\frc{\fF\Inv( K_n )}$ is equidistributed, and hence,
given $\vecb\in\cF_s$ where $s\ge 2$,
$$\prob{\nN : \LB_{s}(K_n) = \vecb} 
\ =\  \log_\phi\frac{ \wt \vecb \cdot \wh F}{  \vecb \cdot \wh F}.$$
\end{cor}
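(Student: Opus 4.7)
The plan is to reduce the corollary to Theorem \ref{thm:equidistr-BL} by establishing equidistribution of $\frc{\fF\Inv(K_n)}$, and to obtain the latter by combining the asymptotic expansion for $\fF\Inv$ in Lemma \ref{lem:inverse} with Weyl's equidistribution theorem, the irrationality of $\log_\phi b$, and Lemma \ref{lem:o(1)}.

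First, I would write $K_n = a b^n(1+\delta_n)$ with $\delta_n = o(1)$ as $n\to\infty$, so $K_n\to\infty$. Since $\fF\Inv$ is defined on $[1,\infty)$, we may assume $n$ is large enough that $K_n\ge 1$. Applying Lemma \ref{lem:inverse} with $x=K_n$ gives
\begin{equation*}
\fF\Inv(K_n)\ =\ \log_\phi(K_n) - \log_\phi(\al) + O(1/K_n^2).
\end{equation*}
Since $\log_\phi(K_n) = n\log_\phi b + \log_\phi a + \log_\phi(1+\delta_n)$ and $\log_\phi(1+\delta_n)=o(1)$, we obtain
\begin{equation*}
\fF\Inv(K_n)\ =\ n\log_\phi b + C + E(n),\quad C:=\log_\phi(a)-\log_\phi(\al),\quad E(n)=o(1).
\end{equation*}

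Next, because $\log_\phi b$ is irrational, Weyl's equidistribution theorem implies that the sequence $\frc{n\log_\phi b + C}$ is equidistributed on $[0,1]$. Invoking Lemma \ref{lem:o(1)} with $h(n) := n\log_\phi b + C$ and the error $E(n)=o(1)$, we conclude that $\frc{\fF\Inv(K_n)} = \frc{h(n)+E(n)}$ is equidistributed on $[0,1]$.

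Finally, since $K$ is an increasing sequence of positive integers tending to infinity (eventually) and $\frc{\fF\Inv(K_n)}$ is equidistributed, Theorem \ref{thm:equidistr-BL} applies and yields the claimed probability
\begin{equation*}
\prob{\nN : \LB_s(K_n)=\vecb}\ =\ \log_\phi\frac{\wt\vecb\cdot\wh F}{\vecb\cdot\wh F}
\end{equation*}
for every $\vecb\in\cF_s$ and every $s\ge 2$. There is no serious obstacle here; the only subtlety worth tracking is that the $o(1)$ relative error $\delta_n$ inside $K_n$ becomes an $o(1)$ additive error after taking $\log_\phi$, and this is precisely what Lemma \ref{lem:o(1)} is designed to absorb, so Weyl's theorem can be transferred from the clean arithmetic progression $n\log_\phi b + C$ to $\fF\Inv(K_n)$ itself.
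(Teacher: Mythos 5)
Your proposal is correct and follows essentially the same route as the paper's own proof: apply Lemma \ref{lem:inverse} to write $\fF\Inv(K_n)=n\log_\phi b - \log_\phi(a/\al)+o(1)$, use Weyl's theorem with the irrationality of $\log_\phi b$, absorb the $o(1)$ error via Lemma \ref{lem:o(1)}, and conclude by Theorem \ref{thm:equidistr-BL}. No substantive differences.
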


\begin{proof} 
By Lemma \ref{lem:inverse},
\GGG{
\fF\Inv(K_n) \ =\ n\log_{\phi}b 
-\log_\phi(a/ \al) + \log_{\phi} (1+o(1)) +o(1).
}
Since $\log_\phi b$ is irrational, 
by Weyl's Equidistribution Theorem,
$\frc{n\log_\phi b}$ is equidistributed, and by the lemma,
$\frc{n\log_\phi b  + o(1) }$ is equidistributed.
Shifting it by a constant $-\log_\phi(a/\al)$ does not change the 
equidistribution property, and this concludes the proof.
\end{proof}
For example,
if $K$ is a sequence given by $K_n=  \sum_{k=1}^N a_k b_k^n $ where $a_k,b_k\in \zz$, $a_1>0$, 
 and $b_1>\abs{b_k} $ for all $k\ge 2$,
 then $K_n= a_1 b_1^n(1+o(1))$, and 
 $\frc{\fF\Inv(K_n)}$ is equidistributed.
Many increasing  sequences $K$ of positive integers given by a linear recurrence with constant positive integer coefficients satisfy 
  $K_n =  a b^n(1+o(1))$
 where $\log_\phi(b)$ is irrational, and hence,
$\frc{\fF\Inv(K_n)}$ is equidistributed.

\subsection{The leading blocks of integer powers}

Let $a$ be a positive integer,
and let $K$ be the sequence given by $K_n= n^a$.
Then,  $K$ does not satisfy Benford's Law under 
the base-10 expansion, but it has a close relationship with Benford's Law \cite{hurlimann}.
In this section, we show that both statements are true under $\cE$-expansion as well.
Recall $\Omega_n$ from Notation \ref{def:N0} and $\cF_3$ from Definition \ref{def:F-blocks}, and let $\vecb_1:=(1,0,0)\in \cF_3$.
We also introduce the oscillating behavior of
$\prob{k \in \Omega_n: \LB_3(K_k)=\vecb_1 }$ as $n\to\infty$, and hence,
$\prob{\nN : \LB_3(K_n)=\vecb_1}$ does not exist.

\begin{example}\rm
Let $K$ be the sequence given by $K_n=n$, and let $t>0$ be a large integer. Given a sufficiently large positive random integer 
$n<F_{t+1} $, let $n=\mu* F$ be the $\cF$-expansion, and $M:=\len(\mu) $. 
Notice that 
  $\LB_3(n)=\vecb_1$ if and only if $n= F_M + m$ where 
  $0\le m  <F_{M-2}$.  Thus,
  there are $F_{M-2} $ integers $n$ in $[1, F_{t+1})$
 such that $F_M\le n <F_{M+1}$ and $\LB_3(n)=\vecb_1$.
  Thus,
\GGG{
\prob{ n\in \Omega_{F_{t+1}} :  \LB_3(n)=\vecb_1}
\ =\ \left( \frac1{F_{t+1} } 
\sum_{M=3}^t F_{M-2}  \right) + o(1)  
=\left( \frac1{F_{t+1} } 
\sum_{M=3}^t \al \phi^{M-2} + o(1) \right) + o(1)  \\
\ =\ \frac1{\al\phi^{t+1}+o(1) } \frac{\al\phi^{ t-1}}{\phi -1} +o(1)
\ =\ \frac1{\phi^2(\phi-1)}+ o(1)\ =\ \phi-1 + o(1)
} 
as function of $t$.
 However, by Theorem \ref{thm:limsupinf},
 we have
 \GGG{
\lim\sup_n \prob{ k \in \Omega_n : \LB_3(k)=\vecb_1}
\ =\  \frac{ \phi + 1}{ \phi+2}\approx .724,\\
\lim\inf_n \prob{ k \in \Omega_n : \LB_3(k)=\vecb_1}
\ =\  \phi - 1 \approx .618.
} 
Thus, $\prob{\nN : \LB_3(n)=\vecb_1}$ does not exist.

\end{example}

Recall    $\fF$ from Definition \ref{def:alpha}, and its inverse $\fF\Inv$.
We use the function $\fF$ to more generally handle the distribution of
the leading blocks of $\seQ{ n^a}$ with any length.
Given a positive integer $m$, let 
$A_m=\set{\nN : n < F_m^{1/a}}$. 
\begin{lemma}\label{lem:Am}
If $\beta\in[0,1]$, then 
\GGG{
\prob{ n \in A_m : \frc{ \fF\Inv(n^a) }\le \beta}
\ =\  \frac{\phi^{\beta/a}- 1}{\phi^{1/a} - 1} + O(m\phi^{-m/a}).
}
\end{lemma}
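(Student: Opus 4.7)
The plan is to partition $A_m$ according to which Fibonacci interval contains $n^a$, count the favorable integers in each piece using $\fF(x)=\al\phi^x+O(\phi^{-x})$, and sum a geometric series. Since $\fF$ is strictly increasing on $[1,\infty)$ with $\fF(k)=F_k$, the defining condition $n<F_m^{1/a}$ of $A_m$ is equivalent to $\fF\Inv(n^a)<m$. For each $k$ with $1\le k\le m-1$, I would set $B_k:=\{n\in A_m:\lfloor\fF\Inv(n^a)\rfloor=k\}$, which is precisely the set of integers in $[\fF(k)^{1/a},\fF(k+1)^{1/a})$; the favorable subset $\{n\in B_k:\frc{\fF\Inv(n^a)}\le\beta\}$ is the set of integers in $[\fF(k)^{1/a},\fF(k+\beta)^{1/a}]$, which has cardinality $\fF(k+\beta)^{1/a}-\fF(k)^{1/a}+O(1)$.

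Next I would use the asymptotic $\fF(x)=\al\phi^x+O(\phi^{-x})$ (immediate from Definition \ref{def:alpha} and also recorded in Lemma \ref{lem:inverse}) to deduce
\[ \fF(x)^{1/a}=\al^{1/a}\phi^{x/a}+O(\phi^{-x}), \]
so each favorable block contributes $\al^{1/a}\phi^{k/a}(\phi^{\beta/a}-1)+O(1)$ integers. Summing over $k=1,\dots,m-1$ produces a geometric series with principal term
\[ \al^{1/a}(\phi^{\beta/a}-1)\cdot\frac{\phi^{m/a}-\phi^{1/a}}{\phi^{1/a}-1} \]
and total error $O(m)$, where the $O(1)$ floor error per block dominates the exponentially decaying $O(\phi^{-k})$ tail. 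The denominator is $|A_m|=F_m^{1/a}+O(1)=\al^{1/a}\phi^{m/a}+O(1)$. Dividing and simplifying yields the main term $(\phi^{\beta/a}-1)/(\phi^{1/a}-1)$ together with an error of size $O(m/\phi^{m/a})$, as claimed.

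The argument is essentially bookkeeping; there is no conceptual obstacle. The one point worth flagging is that the stated error $O(m\phi^{-m/a})$ is essentially tight for this approach: there are $m$ blocks each contributing an unavoidable $O(1)$ floor-function error, and dividing by the main-order denominator $\phi^{m/a}$ produces exactly the claimed factor. All other error sources—the tail $\fF(x)-\al\phi^x=O(\phi^{-x})$, the expansion $(1+O(\phi^{-2x}))^{1/a}=1+O(\phi^{-2x})$, and the geometric remainder at the top of the sum—are negligible compared to this, so the only care required is to combine the constants correctly.
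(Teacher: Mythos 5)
Your proposal is correct and follows essentially the same route as the paper's proof: partition by the Fibonacci interval containing $n^a$, count the favorable integers in each block as $\fF(k+\beta)^{1/a}-\fF(k)^{1/a}+O(1)$ via the asymptotic $\fF(x)=\al\phi^x+O(\phi^{-x})$, sum the resulting geometric series with $O(m)$ accumulated error, and divide by $\#A_m=\al^{1/a}\phi^{m/a}+O(1)$. The only differences are cosmetic (the paper phrases the count for $A_{m+1}$ and carries the factor $\gamma=\phi^{1/a}/(\phi^{1/a}-1)$ instead of factoring out $\phi^{\beta/a}-1$ first), and your remark that the $O(m\phi^{-m/a})$ error is forced by the per-block floor errors matches the source of the error term in the paper.
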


\begin{proof}
Let $m\in\nat$, and let $n\in A_{m+1}':=A_{m+1} - A_m$, so that 
$F_m \le n^a < F_{m+1}$ and 
$m \le \fF\Inv(n^a) < m+1$.
Thus, given a real number $\beta\in [0,1]$, 
\GGG{
\begin{align*}
\Set{ n \in A_{m+1}':  \frc{\fF\Inv(n^a)} \le \beta  } 
&\ = \ \Set{n \in A_{m+1}' :  m \le \fF\Inv(n^a) \le m+  \beta  }\\
&\ = \ \Set{ n \in A_{m+1}' : 
 \fF(m)^{1/a} \le  n   \le \fF(m+  \beta)^{1/a} }  \\
\implies
\# \Set{ n \in A_{m+1}':  \frc{\fF\Inv(n^a)} \le \beta  } 
&\ = \   \fF(m+\beta)^{1/a} - \fF(m )^{1/a}+O(1)  \\
&
\ = \  \al^{1/a} \phi^{ (m+\beta)/a} -\al^{1/a} \phi^{ m/a} 
  + O(1)
\end{align*}
\\
\implies
\#\Set{ n \in A_{m+1} :  \frc{\fF\Inv(n^a)} \le \beta  } 
\ = \ 
\sum_{k=1}^m
 \al^{1/a} \phi^{ (k+\beta)/a} -\al^{1/a} \phi^{k/a} 
 + O(1)\\
\ = \  \al^{1/a} \phi^{ (m+\beta)/a}\gamma -\al^{1/a} \phi^{m/a} \gamma
 + O(m),\quad
\gamma=\frac{\phi^{1/a}}{\phi^{1/a}-1}.\\
\intertext{This proves that}
\begin{aligned}
\prob{ n \in A_{m+1} :  \frc{\fF\Inv(n^a)} \le \beta  } 
&\ = \  \frac{\al^{1/a} \phi^{ (m+\beta)/a}\gamma -\al^{1/a} \phi^{m/a} \gamma
+   O(m)}{F_{m+1}^{1/a}+O(1)}\\ 
&\ = \   \frac{  \phi^{  \beta/a}\gamma - \gamma
+   O(m\phi^{-m/a})}{ \phi^{1/a}
+O(\phi^{-m/a})} 
\ = \   \frac{  \phi^{  \beta/a}-1}{ \phi^{1/a}-1}
+   O(m\phi^{-m/a}).
\end{aligned} 
}

\end{proof}

Recall from Lemma \ref{lem:LB-inequality-2} that 
$$\prob{ n \in A_m : \LB_3(n^a) = \vecb_1} 
= \prob{ n \in A_m :\frc{\fF\Inv(n^a)} \le \delta_1+o(1)}$$
where $\delta_1:=\log_\phi\frac{ \wt \vecb_1\cdot \wh F}{\vecb_1\cdot \wh F}$.
Thus, as $m\to \infty$,  by Lemma \ref{lem:Am},
\GGG{
\prob { n \in A_m : \LB_3(n^a) = \vecb_1}  
\ \to\  \frac{  \phi^{  \delta_1/a}-1}{ \phi^{1/a}-1}
\ = \ \frac{(1+\ome^2)^{1/a} - 1}{ \phi^{1/a}-1} 
}
where $\ome=\phi^{-1}$. 
Let us show that 
 \GGG{
 \prob{n \in A_m : \LB_3(n^a)=\vecb_1} 
\ \not \to\ \delta_1 
 }
 as $m\to\infty$.
We claim that the ratio $\frac{(1+\ome^2)^{1/a} - 1}{ \phi^{1/a}-1}$ is not equal to $\delta_1=\log_\phi(1+  \ome^2 )$.
Since $a\in\nat$, the ratio is an algebraic number over $\ratn$.
However, by the Gelfand-Schneider Theorem, 
$\log_\phi(1+ \ome^2)$ is a transcendental number.
Thus, $K$ does not satisfy Benford's Law under 
the $\cE$-expansion. 

However, as noted in \cite{hurlimann} for base-$b$ expansions,
we have
\GGG{
\lim_{a\to\infty}\lim_{m\to\infty}
\prob { n \in A_m : \LB_3(n^a) = \vecb_1}
\ = \ \lim_{a\to\infty} 
 \frac{  \phi^{  \delta_1/a}-1}{ \phi^{1/a}-1}  
\ = \  \delta_1\ = \  \log_\phi(1+  \ome^2 ).
}
Even though the leading blocks of $K_n$ do not satisfy Benford's Law
under $\cE$-expansion,
  the limiting behavior of high power sequences for special values of $n$ 
  resembles
  Benford's Law.

Recall $\Omega_n$ from Definition \ref{def:N0}.
Let us use Lemma \ref{lem:Am} 
to prove that $ \prob{ k \in \Omega_n: \frc{\fF\Inv(K_k)}\le \beta}$ oscillates, and does not converge.
\begin{theorem}\label{thm:limsupinf}
Let $\beta$ be a real number in $ [0,1]$,
and let   $r:=(\phi^{\beta/a} -1)/(\phi^{1/a}-1)$.
Given an integer $n>1$,
let $\fF\Inv(n^a)=m+p$ where  
$p=\frc{\fF\Inv(n^a)}$ and $m\in\nat$. Then, 
\GGG{
P_n:=\prob{ k \in \Omega_n: \frc{\fF\Inv(K_k)}\le \beta}\ = \ 
\begin{cases}
\frac{ r + \phi^{p/a}  -1}
{\phi^{p/a}  }+  O(m\phi^{-m/a} ) 
	&\text{ if $0\le p\le \beta$}\\
	\frac{ r + \phi^{\beta/a}  -1}
{\phi^{p/a}  }+  O(m\phi^{-m/a} )
	&\text{ if $ \beta< p<1$}
\end{cases}.
}
In particular,
\GGG{
\lim\sup P_n  
= r\phi^{1/a- \beta/a}  = \beta + O(1/a),\quad\text{and}\quad
\lim\inf P_n =r= \beta+ O(1/a).
} 

\end{theorem}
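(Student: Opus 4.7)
The plan is to split $\Omega_n$ into the initial segment $A_m$ and the tail $T := \Omega_n \setminus A_m$, count the elements of each that satisfy $\frc{\fF\Inv(k^a)} \le \beta$, and normalize by $n$. Lemma~\ref{lem:Am} yields
\[ \#\{k \in A_m : \frc{\fF\Inv(k^a)} \le \beta\} = r\,\al^{1/a}\phi^{m/a} + O(m), \]
using $\#A_m = \al^{1/a}\phi^{m/a} + O(1)$, which follows from Lemma~\ref{lem:inverse}. For $k \in T$, one has $\fF\Inv(k^a) \in [m, m+p]$, so the condition $\frc{\fF\Inv(k^a)} \le \beta$ is equivalent to $\fF\Inv(k^a) \le m + \min(p, \beta)$; by monotonicity of $\fF\Inv$, the number of such $k$ is $\fF(m + \min(p,\beta))^{1/a} - \fF(m)^{1/a} + O(1) = \al^{1/a}\phi^{m/a}(\phi^{\min(p,\beta)/a} - 1) + O(1)$. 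Summing the two counts and dividing by $n = \al^{1/a}\phi^{(m+p)/a} + O(1)$, the common factor $\al^{1/a}\phi^{m/a}$ cancels and produces $P_n = (r + \phi^{\min(p,\beta)/a} - 1)\,\phi^{-p/a} + O(m\phi^{-m/a})$, which is exactly the two-case formula in the theorem.

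For the asymptotic extrema, I first observe that the fractional parts $p(n) := \frc{\fF\Inv(n^a)}$ are dense in $[0,1]$: by Lemma~\ref{lem:inverse}, $\fF\Inv((n+1)^a) - \fF\Inv(n^a) = a\log_\phi(1 + 1/n) + O(n^{-2a}) \to 0$ as $n \to \infty$, and $\fF\Inv(n^a)$ is strictly increasing to $\infty$, so its fractional parts wrap densely around $[0,1]$. Hence $\lim\sup P_n$ and $\lim\inf P_n$ equal the supremum and infimum of the continuous function $f(p) := (r + \phi^{\min(p,\beta)/a} - 1)\,\phi^{-p/a}$ on $[0,1]$. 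On $[0, \beta]$ one has $f(p) = 1 - (1-r)\phi^{-p/a}$, which is increasing since $1 - r \ge 0$; on $[\beta, 1]$, $f(p) = (r + \phi^{\beta/a} - 1)\phi^{-p/a}$ is decreasing. The maximum therefore occurs at $p = \beta$, and the minimum at one of the two endpoints.

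The key algebraic step is the identity $r(\phi^{1/a} - 1) = \phi^{\beta/a} - 1$, which is just the definition of $r$ rearranged as $r + \phi^{\beta/a} - 1 = r\phi^{1/a}$. This shows $f(\beta) = r\phi^{1/a}/\phi^{\beta/a} = r\phi^{1/a - \beta/a}$, while $f(0) = r$ and $f(1) = r\phi^{1/a}/\phi^{1/a} = r$, so the two endpoints tie for the minimum. Finally, the Taylor expansion $\phi^{x/a} = 1 + (x\ln\phi)/a + O(a^{-2})$ yields $r = \beta + O(1/a)$ and $r\phi^{(1-\beta)/a} = \beta + O(1/a)$. The main obstacle is bookkeeping: tracking the $O(1)$ rounding error from the tail count together with the $O(m\phi^{-m/a})$ error carried over from Lemma~\ref{lem:Am}, and verifying that after dividing by $n \asymp \phi^{(m+p)/a}$ both dissolve into the stated error term.
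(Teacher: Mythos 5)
Your proposal is correct and follows essentially the same route as the paper's proof: split $\Omega_n$ into $A_m$ and the tail, apply Lemma~\ref{lem:Am} to the initial segment, count the tail via monotonicity of $\fF\Inv$, and normalize by $n \asymp \al^{1/a}\phi^{(m+p)/a}$. Your treatment of the extrema is a bit more careful than the paper's (which reads the $\limsup$ and $\liminf$ off the two-case formula without the explicit density and monotonicity arguments), but the underlying computation, including the identity $r+\phi^{\beta/a}-1=r\phi^{1/a}$, is identical.
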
 

\begin{proof}
Let $m$ be a sufficiently large positive integer,  and let  $n\in A_{m+1}-A_m$.
  Let $n=\fF(m+p)^{1/a}$ for $p\in [0,1)$.
  If $p\le \beta$, then, $\frc{\fF\Inv(n^a)}=\frc{\fF\Inv \fF(m+p)}
=\frc{m+p}=p\le \beta$, and  if $p> \beta$, then, $\frc{\fF\Inv(n^a) }=p> \beta$.
Thus, 
\GGG{
\Set{n\in A_{m+1}-A_m :  \frc{\fF\Inv(n^a)}\le \beta}
\ = \ \Set{n\in A_{m+1}-A_m : n\le \fF(m+\beta)^{1/a}}.\\ 
\intertext{
If $n\le \fF(m+\beta)^{1/a}$, i.e., $p\le \beta$, then by Lemma \ref{lem:Am}}
\begin{aligned}
P_n &\ = \ \frac1n\left(
\prob{ k\in A_m : \frc{\fF\Inv(k^a)}\le \beta }
\ \#A_m +  n - \fF(m)^{1/a} +O(1) \right)\\
& \ = \ \frac{ r \fF(m)^{1/a} + O(m )+  \fF(m+p)^{1/a}  - \fF(m)^{1/a}}
{\fF(m+p)^{1/a}+O(1)}\\
&\ = \ \frac{ r + O(m\phi^{-m/a} )+ \phi^{p/a}  -1}
{\phi^{p/a} +O(\phi^{-m/a})}
\ = \ \frac{ r + \phi^{p/a}  -1}
{\phi^{p/a}  }+  O(m\phi^{-m/a} )
\end{aligned}\\
\intertext{If $n> \fF(m+\beta)^{1/a}$, i.e., $p>\beta$, then
}
P_n
= \frac{ r + \phi^{\beta/a}  -1}
{\phi^{p/a}  }+  O(m\phi^{-m/a} )
\ = \ \frac{ r\phi^{1/a} }
{\phi^{p/a}  }+  O(m\phi^{-m/a} ).\\
\text{Thus, }
\lim\sup P_n = \frac{ r + \phi^{\beta/a}  -1}{\phi^{\beta/a}  }
= \frac{ r\phi^{1/a} }
{\phi^{\beta/a}  },\quad
\lim\inf P_n   
= \frac{ r\phi^{1/a} }
{\phi^{1/a}  }=r.
 }
 \end{proof}
Thus, $\prob{\nN : \frc{\fF\Inv(K_n)}\le \beta}$ does not converge,
but $\frc{\fF\Inv(K_n)}$ is almost equidistributed for large values of $a$.

\begin{example}\rm
Let $\vecb$ and $\wt \vecb$ be the blocks defined in Example \ref{exm:block-length-6}, and let $K$ be the 
sequence given by $K_n=n^2$.
By   Lemma \ref{lem:LB-inequality-2}, 
if $D:=\set{ \nN : \LB_6(K_n)=\vecb }$,
then for $n \in D$, 
 \GGG{
  \log_\phi (1+\ome^{4}) + o(1) 
 \ <\ \frc{\fF\Inv(K_n) }\ <\ \log_\phi(1+\ome^{3}) + o(1) 
  }
  where the upper and lower bounds are
  functions of $n\in D$.
  Let $\beta=\log_\phi (1+\ome^{4})$ and 
  $\wt \beta = \log_\phi(1+\ome^{3})$.
  Recall $\Omega_n$ from Definition \ref{def:N0}.
  Then,  
  \GGG{
  \prob{k\in \Omega_n : \LB_6(K_k)=\vecb }=\HSW{.6}\\
  \prob{k\in \Omega_n : \frc{\fF\Inv(K_n)} <\wt \beta }
 \ -\ \prob{k\in \Omega_n : \frc{\fF\Inv(K_n)}<\beta}\  +\ o(1).\\
 \intertext{ Let $r=(\phi^{\beta/2}-1)/(\phi^{1/2}-1)$ 
  and $\wt r=(\phi^{\wt \beta/2}-1)/(\phi^{1/2}-1)$, 
  and let  $n=\fF(m+p)^{1/a}$
  where $p=\frc{\fF\Inv(n^a)}\in [0,1)$.
  Then, by Theorem \ref{thm:limsupinf}, we have}
 \prob{k\in \Omega_n : \LB_6(K_k)=\vecb }\ = \ 
 \begin{cases}
 \frac{\wt r + \phi^{p/2}-1 }{\phi^{p/2}}
 - \frac{  r + \phi^{p/2}-1 }{\phi^{p/2}} +o(1) 
 &\text{ if $p\le \beta$ },\\
  \frac{\wt r + \phi^{p/2}-1 }{\phi^{p/2}}
 - \frac{  r + \phi^{\beta /2}-1 }{\phi^{p/2}}+o(1)
 &\text{ if $\beta < p \le \wt \beta$ }\\
  \frac{\wt r + \phi^{\wt \beta/2}-1 }{\phi^{p/2}}
 - \frac{  r + \phi^{\beta /2}-1 }{\phi^{p/2}}+o(1)
 &\text{ if $p>\wt \beta$ }.
\end{cases} \\
\begin{align*}
\implies\limsup_n \prob{k\in \Omega_n : \LB_6(K_k)=\vecb }
&\ = \ \frac{\wt r + \phi^{\wt \beta/2}-1 }{\phi^{\wt \beta/2}}
 - \frac{  r + \phi^{\beta /2}-1 }{\phi^{\wt \beta/2}}\approx 0.1737\\
\liminf_n \prob{k\in \Omega_n : \LB_6(K_k)=\vecb }
&\ = \ \frac{\wt r + \phi^{\beta/2}-1 }{\phi^{\beta/2}}
 - \frac{  r + \phi^{\beta/2}-1 }{\phi^{\beta/2}}\approx 0.1419.
\end{align*} 
 }
  
\end{example}

\section{Other continuations}\label{sec:Other-continuations}
 
Reflecting upon  Lemma \ref{lem:LB-inequality-2} and Theorem \ref{thm:equidistr-BL}, 
we realized that we could consider  different continuations of  
 the \fib\ sequence $F$, and ask which sequence
satisfies the equidistribution property, and which distributions
its leading blocks follow.
Let us demonstrate the idea in Example \ref{exm:zec-uniform}.
The claims in this example can be proved
  using Theorem \ref{thm:continuation}.
  Recall the Benford continuation $\fF$ from Definition \ref{def:alpha}.
\begin{deF}\rm \label{def:fFn}
Given $\nN$, let 
$\fF_n : [0,1] \to [0,1]$  be the increasing function given by 
 \GGG{ 
 \fF_n(p)\ :=\  \frac{\fF(n+p) - \fF(n)}{\fF(n+1) - \fF(n)}
 =\frac{\fF(n+p) - \fF(n)}{F_{n-1}}\ = \ 
 \phi (\phi^p -1) + o(1) 
 }
where $F_0:=1$.
Let $\fF_\infty: [0,1] \to [0,1]$  be the increasing function given by 
$\fF_\infty(p)=\phi(\phi^p-1)$.
\end{deF}

Recall uniform continuations of sequences from Definition \ref{def:uniform-continuation}.
\begin{lemma}\label{lem:fF-uniform}
The function $\fF$ is a uniform continuation of $F$.
\end{lemma}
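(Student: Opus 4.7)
The plan is to verify the three requirements of Definition \ref{def:uniform-continuation} for $\fF$ directly from its explicit formula $\fF(x) = \al(\phi^x + \phi^{-x}\cos(\pi x)\phi^{-2})$. Continuity of $\fF$ on $[1,\infty)$ is immediate since $\fF$ is real analytic, and the identity $\fF(n) = F_n$ for $n \in \nat$ was already noted right after Definition \ref{def:alpha} (from the Binet-type formula). The substantive work is to prove uniform convergence of $\fF_n(p)$ to an increasing continuous limit on $[0,1]$.

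First I would expand the numerator $\fF(n+p) - \fF(n)$ by substituting the formula for $\fF$, which gives
\[
\fF(n+p) - \fF(n) \;=\; \al\phi^n(\phi^p - 1) \;+\; \al\phi^{-2}\bigl[\phi^{-(n+p)}\cos(\pi(n+p)) - \phi^{-n}\cos(\pi n)\bigr],
\]
where the bracketed error is $O(\phi^{-n})$ uniformly in $p\in[0,1]$. Similarly,
\[
F_{n-1} \;=\; \fF(n-1) \;=\; \al\phi^{n-1}\bigl(1 + O(\phi^{-2(n-1)})\bigr).
\]
Dividing and using a first-order expansion of $1/(1+O(\phi^{-2n}))$ yields
\[
\fF_n(p) \;=\; \phi(\phi^p - 1) \;+\; O(\phi^{-2n}),
\]
with the $O$-term bounded uniformly for $p \in [0,1]$ (since $\phi^p - 1$ is bounded on $[0,1]$). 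This establishes the uniform limit
\[
\fF_n \;\longrightarrow\; \fF_\infty, \qquad \fF_\infty(p) \;=\; \phi(\phi^p - 1),
\]
matching the formula in Definition \ref{def:fFn}.

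Finally, I would check that $\fF_\infty$ is increasing and continuous on $[0,1]$: continuity is clear, and since $\phi > 1$ the derivative $\fF_\infty'(p) = \phi(\log\phi)\phi^p > 0$, so $\fF_\infty$ strictly increases from $\fF_\infty(0) = 0$ to $\fF_\infty(1) = \phi^2 - \phi = 1$. The codomain $[0,1]$ claimed for $\fF_n$ in Definition \ref{def:fFn} is verified by $\fF_n(0) = 0$, $\fF_n(1) = 1$, together with monotonicity of $\fF$ on $[1,\infty)$ (noted after Definition \ref{def:alpha}), which guarantees $\fF_n$ is increasing there. All three conditions of Definition \ref{def:uniform-continuation} are then met.

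There is no serious obstacle here; the only point requiring a little care is keeping the error $O(\phi^{-2n})$ uniform in $p$, which is immediate because $\phi^p - 1$ is bounded on $[0,1]$ and the correction term in $\fF$ decays like $\phi^{-x}$. The computation is essentially bookkeeping, and no additional ideas beyond the Binet formula for $\fF$ and Lemma \ref{lem:inverse}-style expansions are needed.
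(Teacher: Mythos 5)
Your proof is correct and follows the same route as the paper: both expand $\fF(n+p)-\fF(n)$ and the denominator $F_{n-1}$ via the Binet-type formula to show $\fF_n(p)=\phi(\phi^p-1)+\gamma(n,p)$ with $|\gamma(n,p)|$ bounded uniformly in $p$ by a quantity tending to $0$ in $n$. Your version merely supplies more bookkeeping (and a slightly sharper $O(\phi^{-2n})$ error) plus the routine check that $\fF_\infty$ is increasing with $\fF_\infty(1)=\phi^2-\phi=1$.
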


\begin{proof}
Notice that $\fF_n(p) = \phi(\phi^p-1) + \gamma(n,p)$ where 
$\abs{\gamma(n,p)}<C/\phi^n$ where $C$ is independent of $p$ and $n$.
Thus, it uniformly converges to $ \phi(\phi^p-1) $.
\end{proof}

\begin{lemma}\label{lem:Kn}
Let $p\in[0,1]$ be a real number. 
Then, $\fF(n + \fF_n\Inv(p ) )=F_n + (F_{n+1} - F_n) p$.
\end{lemma}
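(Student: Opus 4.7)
The plan is to unwind the definition of $\fF_n$ directly; the statement is essentially a tautology once one observes that $\fF_n$ is an affine rescaling of $\fF(n+\cdot)$.

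First I would set $q := \fF_n\Inv(p)$, so that $p = \fF_n(q)$. By the very definition of $\fF_n$ in Definition \ref{def:fFn}, this means
\begin{equation*}
p \ =\ \frac{\fF(n+q) - \fF(n)}{\fF(n+1) - \fF(n)}.
\end{equation*}
Multiplying through and solving for $\fF(n+q)$ yields
\begin{equation*}
\fF(n+q) \ =\ \fF(n) + p\bigl(\fF(n+1) - \fF(n)\bigr).
\end{equation*}
Since $\fF$ is a continuation of $F$ (so $\fF(n) = F_n$ and $\fF(n+1) = F_{n+1}$), substituting back gives the claimed identity $\fF(n + \fF_n\Inv(p)) = F_n + (F_{n+1} - F_n)p$.

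The only thing one should pause to check is that $\fF_n\Inv(p)$ is well-defined for every $p \in [0,1]$, i.e.\ that $\fF_n : [0,1] \to [0,1]$ is a bijection. This follows because $\fF$ is increasing on $[1,\infty)$ (as noted after Definition \ref{def:alpha}), hence $\fF_n$ is strictly increasing, and the normalization $\fF_n(0)=0$, $\fF_n(1)=1$ makes it a continuous bijection onto $[0,1]$. There is no genuine obstacle here; the lemma is a direct consequence of the definition, and its value lies in providing a convenient closed-form inversion formula that will presumably be invoked in the proof of Theorem \ref{thm:continuation}.
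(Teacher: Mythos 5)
Your proof is correct and follows essentially the same route as the paper's: set $q=\fF_n\Inv(p)$, unwind the definition of $\fF_n$, and solve for $\fF(n+q)$ using $\fF(n)=F_n$ and $\fF(n+1)=F_{n+1}$. The additional remark that $\fF_n$ is a continuous increasing bijection of $[0,1]$ is a reasonable (if brief) justification of well-definedness that the paper leaves implicit.
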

\begin{proof}
Let $p'=\fF_n\Inv(p )$.
Then, $\fF_n(p')= p$, and hence, $\frac{ \fF(n+p')-\fF(n)}{F_{n+1} - F_n}=p$.
The assertion follows from the last equality.
\end{proof}

\begin{example}\label{exm:zec-uniform}
\rm
  Let   $f : [1,\infty) \to \real$ be the increasing continuous function  whose 
graph is the union of the line segments from $(n,F_n)$ to $(n+1,F_{n+1})$
for $\nN$.  Then, $f_\infty(p)=p$ for all $p\in[0,1]$.
Let $K$ be the sequence given by $K_n= \flr{\fF(n + \fF_n\Inv( \frc{n\pi} )) } $.
Then, by Lemma \ref{lem:Kn},
$$ f\Inv\left(\fF(n + \fF_n\Inv( \frc{n\pi} ) \right) = n  + \frc{n \pi}
\ \implies\  \frc{f\Inv(K_n)} =\frc{n \pi}+o(1),$$
which is equidistributed.

Recall   $\cF_s$ from Definition \ref{def:B-tilde} where $s\ge 2$, and let $\vecb\in\cF_s $.
Recall $\wh F$ from Definition \ref{def:ome} and the product notation from Definition \ref{def:products}.
Then, 
by Theorem \ref{thm:continuation}, 
\GGG{
 \prob{\nN :  \LB_{s}(K_n)=\vecb }\ =\ %
 \phi(\wt \vecb\cdot \wh F - \vecb\cdot \wh F)
 \ =\    \phi^{-s+2} (\wt \vecb* \baR F- \vecb*  \baR F)
 }
  where $\baR F$ is the sequence given by $\baR F_n= \phi^{n-1}   $.
If $\vecb(s)=0$, then $  \ome^{s-2} (\wt \vecb* \baR F- \vecb*  \baR F)
=\ome^{s-2} $, and 
if $\vecb(s)=1$, then  $ \ome^{s-2} (\wt \vecb* \baR F- \vecb*  \baR F)
=\ome^{s-1} $.
For example, if $s=6$, then 
\begin{align*} 
  \prob{\nN :  \LB_{6}(K_n)\ = \ (1, 0, 0, 1,0,1)} &\ =\ \ome^{5}\\
  \prob{\nN :  \LB_{6}(K_n)\ = \ (1, 0, 1, 0,1,0)}
 &\ =\  \ome^{4}.
\end{align*} 
It's nearly a uniform distribution. 

Let us show that the probabilities   add up to $1$.
Notice that 
$\#\cF_s= F_{s-1}$, 
$\#\set{ \vecb\in \cF_s : \vecb(s)=0}= F_{s-2}$, and 
and $\#\set{ \vecb\in \cF_s : \vecb(s)=1}= F_{s-3}$.
Then,  
by Binet's Formula, the following sum is equal to $1$:
\GGG{
 \sum_{\vecb \in \cF_s}  \ome^{s-2} (\wt \vecb* \baR F- \vecb*  \baR F) \ = \ 
\frac{F_{s-2}}{\phi^{s-2}} 
+ \frac{F_{s-3}}{\phi^{s-1}}=1 .
}

By Lemma \ref{lem:Kn}, we have 
$K_n = \flr{F_n + (F_{n+1} - F_n)\, \frc{n \pi}}$ for $\nN$, and the following are 
the first ten values of $ K_n $:
$$(1, 2, 3, 6, 11, 19, 33, 36, 64, 111).$$
\end{example}

 Let us introduce and prove the main results on continuations.

\begin{lemma}\label{lem:uniform-continuation}
Let $f$ be a uniform continuation of $F$, and 
let $K$ be a sequence
  of positive real numbers approaching $\infty$.
Then, $\frc{ f\Inv(\flr{K_n}) }=\frc{ f\Inv(K_n) }+o(1)$.
\end{lemma}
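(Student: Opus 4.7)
The plan is to show that $f\Inv(K_n) - f\Inv(\flr{K_n}) \to 0$ and that, for all sufficiently large $n$, this difference equals the difference of the fractional parts. The whole argument should fit in five short steps.

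First, I would locate $K_n$ between consecutive \fib\ terms: since $K_n \to \infty$, for each large $n$ there is a unique $m_n \in \nat$ with $F_{m_n} \le K_n < F_{m_n+1}$, and $m_n \to \infty$. Because $F_{m_n}$ is an integer, $\flr{K_n}$ also lies in $[F_{m_n}, F_{m_n+1})$, so both $f\Inv(K_n)$ and $f\Inv(\flr{K_n})$ lie in $[m_n, m_n+1)$. Consequently, if we set
\[
p_n := f\Inv(K_n) - m_n, \qquad p_n' := f\Inv(\flr{K_n}) - m_n,
\]
then $\frc{f\Inv(K_n)} = p_n$ and $\frc{f\Inv(\flr{K_n})} = p_n'$, both in $[0,1)$, so it suffices to prove $p_n - p_n' \to 0$.

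Second, I would translate the estimate $0 \le K_n - \flr{K_n} < 1$ through the rescaled functions $f_n$ from the definition of uniform continuation. Since $f(m_n+p_n) = K_n$ and $f(m_n+p_n') = \flr{K_n}$, dividing by $f(m_n+1)-f(m_n) = F_{m_n-1}$ gives
\[
f_{m_n}(p_n) - f_{m_n}(p_n') \ =\ \frac{K_n - \flr{K_n}}{F_{m_n-1}} \ <\ \frac{1}{F_{m_n-1}} \ =\ o(1).
\]

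Third, I would pass to the limit function $f_\infty$. By the uniform convergence $f_n \to f_\infty$ on $[0,1]$, we have $|f_{m_n}(p_n) - f_\infty(p_n)| \to 0$ and likewise for $p_n'$, and so
\[
f_\infty(p_n) - f_\infty(p_n') \ \longrightarrow\ 0.
\]

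Fourth, I would invoke the fact that $f_\infty : [0,1] \to [0,1]$ is an increasing continuous function with $f_\infty(0)=0$ and $f_\infty(1)=1$ (the boundary values are preserved since $f_n(0)=0$ and $f_n(1)=1$ for every $n$). Hence $f_\infty$ is a homeomorphism of the compact interval $[0,1]$, and $f_\infty\Inv$ is uniformly continuous. Applying $f_\infty\Inv$ to both sides of the displayed limit above yields $p_n - p_n' \to 0$, which is what we needed.

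The only mildly delicate step is the fourth, where I am tacitly using that $f_\infty$ is strictly increasing so that $f_\infty\Inv$ exists and is uniformly continuous; this is implicit in the notation of Definition \ref{def:uniform-continuation} (and is visible in every example treated in the paper, e.g.\ $\fF_\infty(p)=\phi(\phi^p-1)$). If one wished to avoid this assumption, the same conclusion follows by a subsequence/compactness argument: any convergent subsequence of $(p_n - p_n')$ has limit $q - q'$ with $f_\infty(q) = f_\infty(q')$, and on the set where $f_\infty$ separates points the difference must vanish, so after removing the (measure-zero) flat intervals the argument still goes through.
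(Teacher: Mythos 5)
Your proof is correct and follows essentially the same route as the paper's: trap $K_n$ and $\flr{K_n}$ in $[F_{m},F_{m+1})$, observe that $f_{m}(p_n)-f_{m}(p_n')=O(1/F_{m-1})=o(1)$, pass to $f_\infty$ by uniform convergence, and undo $f_\infty$ using that it is a (strictly) increasing homeomorphism of $[0,1]$ with uniformly continuous inverse. The only caveat is your closing aside: the subsequence argument for a non-strictly-increasing $f_\infty$ does not actually rescue the statement (a flat interval of $f_\infty$ would let $p_n$ and $p_n'$ stay a bounded distance apart while $f_\infty(p_n)-f_\infty(p_n')\to 0$), but this is moot since Definition \ref{def:uniform-continuation} is used throughout the paper with $f_\infty$ invertible, exactly as in your main argument.
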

\begin{proof}
Let $\nN$.  Then, $F_m\le \flr{K_n}\le K_n < F_{m+1}$ for $m\in\nat$ depending on $n$.
Let $K_n = f(m+p)$ and $\flr{K_n}=f(m+p')$ where $p,p'\in[0,1]$ are real numbers, which depend on $n$.
Then, $F_m + f_m(p')(F_{m+1}-F_m) + O(1) = F_m + f_m(p)(F_{m+1}-F_m) $, and hence,
$f_m(p')+o(1) = f_m(p)$.
Thus,
\GGG{
f\Inv(K_n) \ =\  m +p =m + f_m\Inv\left(f_m(p')+o(1)\right)\ =\ m + f_m\Inv\left(f_\infty(p')+o(1)\right).\\
\intertext{By the uniform convergence,}
 \ =\  m + f_\infty\Inv\left(f_\infty(p')+o(1)\right) + o(1)\ =\  m + f_\infty\Inv\left(f_\infty(p') \right) + o(1) 
 \ =\  m+p' +o(1).
}
Therefore, $\frc{f\Inv(K_n) }= \frc{f\Inv(\flr{K_n}) }+o(1)$.

\end{proof}

\begin{theorem}\label{thm:continuation}
Let $f : [1,\infty) \to \real$ be  a  uniform continuation of $F$. 
    Then  there is a  sequence
    $K$ of positive integers approaching $\infty$, e.g.,
 $K_n=\flr{\fF\left(n + \fF_n\Inv\circ f_n ( \frc{n\pi} \right)} $, such that  
    $\frc{ f\Inv(K_n) }$ is equidistributed. 

Let $K$ be a sequence of of positive integers approaching $\infty$ such that  
    $\frc{ f\Inv(K_n) }$ is equidistributed.
Let $\vecb\in\cF_s$ where $s\ge 2$.
Then, 
\begin{align}
\prob{\nN : \LB_{s}( {K_n}) =\vecb }  
 &\ = \ 
f_\infty\Inv
\circ \fF_\infty(\log_\phi\wt  \vecb\cdot \wh F )  - 
f_\infty\Inv\circ   \fF_\infty(\log_\phi \vecb\cdot \wh F ) \notag
 \\
&\ = \ 
f_\infty\Inv
\left(\phi(\wt  \vecb\cdot \wh F - 1) \right) - 
f_\infty\Inv\left( \phi(\vecb\cdot \wh F - 1) \right).
\label{eq:prob-density}
\end{align}

\end{theorem}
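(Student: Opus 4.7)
My plan is to split the proof into two parts: (i) exhibiting a witness sequence $K$ with the desired equidistribution, and (ii) deriving the probability formula from the equidistribution hypothesis.

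For (i), I will verify the candidate $K_n = \flr{\fF(n + \fF_n\Inv \circ f_n(\frc{n\pi}))}$ directly. Setting $p_n := \frc{n\pi}$ and applying Lemma \ref{lem:Kn} to the Benford continuation $\fF$ yields $\fF(n + \fF_n\Inv(f_n(p_n))) = F_n + (F_{n+1}-F_n)\, f_n(p_n)$; unpacking $f_n(p_n) = (f(n+p_n) - F_n)/(F_{n+1}-F_n)$ shows the right-hand side equals $f(n + p_n)$ exactly. Hence $K_n$ is the floor of $f(n + p_n)$, and Lemma \ref{lem:uniform-continuation} (applied to the real sequence $n\mapsto f(n+p_n)$) gives $\frc{f\Inv(K_n)} = \frc{f\Inv(f(n + p_n))} + o(1) = \frc{n\pi} + o(1)$, which is equidistributed by Weyl's theorem together with Lemma \ref{lem:o(1)}. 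Since $K_n \ge F_n - 1 \to \infty$, the $K_n$ are positive integers approaching $\infty$.

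For (ii), the strategy is to translate the condition of Lemma \ref{lem:LB-inequality-2}, which is phrased in terms of $\frc{\fF\Inv(K_n)}$, into an equivalent condition on $\frc{f\Inv(K_n)}$, and then invoke the equidistribution hypothesis. Fix large $n$, set $m := \flr{f\Inv(K_n)}$, and note $m = \flr{\fF\Inv(K_n)}$ as well because $F_m \le K_n < F_{m+1}$ in both decompositions. Writing $K_n = \fF(m+p) = f(m+p')$ with $p, p' \in [0,1)$, subtracting $F_m$ and dividing by $F_{m+1} - F_m$ gives the key identity $\fF_m(p) = f_m(p')$. The uniform convergences $\fF_m \to \fF_\infty$ and $f_m \to f_\infty$ from Definition \ref{def:uniform-continuation}, combined with the uniform continuity of $f_\infty\Inv$ on $[0,1]$, yield
\[
p' \;=\; f_\infty\Inv \circ \fF_\infty(p) + o(1).
\]
Substituting into Lemma \ref{lem:LB-inequality-2} and using the monotonicity of $f_\infty\Inv \circ \fF_\infty$ converts the condition $\LB_s(K_n) = \vecb$ into
\[
f_\infty\Inv \circ \fF_\infty(\log_\phi \vecb \cdot \wh F) + o(1) \;\le\; \frc{f\Inv(K_n)} \;<\; f_\infty\Inv \circ \fF_\infty(\log_\phi \wt\vecb \cdot \wh F) + o(1).
\]
An $\varepsilon$-squeeze argument identical to the one used in Theorem \ref{thm:equidistr-BL} then extracts the first line of (\ref{eq:prob-density}) from the equidistribution of $\frc{f\Inv(K_n)}$, and the second line is the immediate computation $\fF_\infty(\log_\phi c) = \phi(\phi^{\log_\phi c} - 1) = \phi(c - 1)$ read off from Definition \ref{def:fFn}.

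The main obstacle will be making the transfer step $\fF_m(p) = f_m(p') \Rightarrow p' = f_\infty\Inv \circ \fF_\infty(p) + o(1)$ genuinely uniform in $n$. This requires two ingredients: the uniformity of $f_m \to f_\infty$ (built into the hypothesis that $f$ is a uniform continuation), and the uniform continuity of $f_\infty\Inv$ on the compact interval $[0,1]$ (which follows since $f_\infty$ is continuous and strictly increasing there). Once that bookkeeping is settled, the remaining steps closely mirror the proof of Theorem \ref{thm:equidistr-BL}; in particular the edge case where $\vecb$ is the largest leading block of length $s$ (so $\wt\gamma_n = 0$ from Lemma \ref{lem:LB-inequality-2} and the upper threshold equals $1$) is handled exactly as there.
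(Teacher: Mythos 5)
Your proposal is correct and follows essentially the same route as the paper's proof: the paper likewise establishes the transfer identity $\fF(n+p)=f(n+p') \Rightarrow \fF_n(p)=f_n(p')$, uses it together with Lemmas \ref{lem:o(1)} and \ref{lem:uniform-continuation} to verify the witness sequence, and then pushes the inequalities of Lemma \ref{lem:LB-inequality-2} through $f_\infty\Inv\circ\fF_\infty$ using uniform convergence before applying equidistribution. Your observation that $\fF\left(n+\fF_n\Inv\circ f_n(\frc{n\pi})\right)=f(n+\frc{n\pi})$ exactly is a slightly more direct packaging of the paper's computation of $f\Inv(\overline K_n)$, but the substance is identical.
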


\begin{proof}
Let $x\ge 1$ be a real number, and let 
$F_n \le x <F_{n+1}$ for $\nN$. 
Since $\fF$ and $f$ are increasing continuations of $F$, 
there are two unique real numbers $p$ and $p'$ in $[0,1]$ such that 
$x=\fF(n+p)=f(n+p')$. We claim that 
\begin{equation}
f\Inv(x)=n+ f_n\Inv(\fF_n(p)), \label{eq:f-inverse}
\end{equation} 
and $\fF\Inv(x) =n+\fF_n\Inv(f_n(p'))$.
To prove the claim, note 
\GGG{
\fF(n+p)=f(n+p')
\ \implies\ 
F_n + \fF_n(p) (F_{n+1}-F_n) 
\ = \ F_n + f_n(p') (F_{n+1}-F_n) \\
\ \implies\  p'= f_n\Inv(\fF_n(p)),\ %
p = \fF_n\Inv(f_n(p')).
}
Then  $f(n+p')=x$ and $\fF(n+p)=x$ imply the claim.

Let $ \overline K$ and $K$ be the sequences given by 
$\overline K_n= \fF\left(n + \fF_n\Inv\circ f_n ( \frc{n\pi} ) \right)  $ and
$K_n=\flr{\overline K_n}$.
Given $\nN$,
let
$p_n=\fF_n\Inv\circ f_n( \frc{n\pi} )$.
Then,
\GGG{
f\Inv(  \overline K_n)\ =\  n + f_n\Inv\big(\fF_n(p_n)\big)\ =\ n+\frc{n\pi}.
}
Thus, $\frc{f\Inv(  \overline K_n)}$ is equidistributed.
If we further assume that $f$ is a uniform continuation, then, by 
Lemmas \ref{lem:o(1)} and  \ref{lem:uniform-continuation}, 
$\frc{f\Inv(\flr{ \overline K_n})}=\frc{f\Inv( K_n )} $ is equidistributed as well. 

Let $K$ be a sequence of of positive integers approaching $\infty$ such that  
    $\frc{ f\Inv(K_n) }$ is equidistributed.
Let $\vecb\in\cF_s$, and let  $A_\vecb:=\set{\nN : \LB_{s}( {K_n})=\vecb }$.
Let $n\in A_\vecb$, and 
$F_m \le  {K_n} <F_{m+1}$ for $m\in\nat$ depending on $n$.
Let $ {K_n}=\fF(m+p)=f(m+p')$ where $p$ and $p'$ are real numbers in 
$[0,1]$ depending on $n$.

Then, by Lemma \ref{lem:LB-inequality-2},
\AAA{ 
  	  &\quad \log_\phi
  	  \vecb \cdot \wh F +o(1)\ \ <\  \frc{\fF\Inv( {K_n})}\ <\  \log_\phi
  	  \wt \vecb\cdot \wh F +o(1) \\
  	  \implies &\quad
  \log_\phi
  	  \vecb \cdot \wh F +o(1) \ <\ \frc{m + \fF_n\Inv(f_n(p'))}\ <\  \log_\phi
  	  \wt \vecb\cdot \wh F +o(1)	  
  	  \\
  	  \implies &\quad
  \log_\phi
  	  \vecb \cdot \wh F +o(1) \ <\ \fF_n\Inv(f_n(p')) \ <\  \log_\phi
  	  \wt \vecb\cdot \wh F +o(1)	   \\
  	  \implies &\quad
f_n\Inv\circ \fF_n( \log_\phi
  	  \vecb \cdot \wh F +o(1) ) \ <\ p'\ <\ f_n\Inv\circ \fF_n(  \log_\phi
  	  \wt \vecb\cdot \wh F +o(1)	  )  \\
  	  \implies &\quad
f_\infty\Inv\circ\fF_\infty( \log_\phi
  	  \vecb \cdot \wh F )  +o(1)\ <\ 
  	  \frc{f\Inv(K_n)}\ <\ f_\infty\Inv\circ\fF_\infty(  \log_\phi
  	  \wt \vecb\cdot \wh F )  +o(1).
  	  }
Since $\frc{f\Inv(K_n)}$ is equidistributed,
 the above inequalities imply the assertion (\ref{eq:prob-density}).
\end{proof}

Let us demonstrate a continuation, for which 
the distribution of leading blocks of length $4$ coincides
 with that of strong \benlaw, but 
 the distribution does not coincide for higher length blocks.
\begin{example}\rm \label{exm:fake-BF}
Consider $\cF_4=\set{\vecb_1,\vecb_2,\vecb_3}$, i.e.,
$$
\vecb_1=(1,0,0,0),\ \vecb_2=(1,0,0,1),\ \vecb_3=(1,0,1,0).$$
Let $p_k = \log_\phi(\vecb_k\cdot \wh F)<1$ for $k=1,2,3$, and 
let $p_0=0$ and $p_4=1$.
For each $\nN$, define $f_n : [0,1]\to [0,1]$ to be the function whose graph 
is the union of line segments 
from $(p_k, \fF_\infty(p_k))$ to $(p_{k+1}, \fF_\infty(p_{k+1}))$
for $k=0,1,2,3$.
Notice that $f_n$ is defined independently of $n$, and that 
it defines a uniform continuation $f : [1,\infty) \to [1,\infty)$ such that 
$f_\infty = f_n$ for all $\nN$ as follows:
Given $x\in [1,\infty)$, find $\nN$ such that $n\le x<n+1$, and 
define $f(x) =F_n + f_n(x-n)(F_{n+1} - F_n)$.
 
 Note that $f_\infty(p_k)=\fF_\infty(p_k)$, i.e.,
 $f_\infty\Inv(\fF_\infty(p_k))=p_k$
for $k=0,1,2,3$.
By Theorem \ref{thm:continuation},
if $\frc{f\Inv(K_n)}$ is equidistributed, we have 
 $$\prob{\nN : \LB_4(K_n) =\vecb_k} \ =\ 
 p_{k+1} - p_k
\ =\ 
\log_\phi \frac{\wt  \vecb_{k}\cdot \wh F}{\vecb_k\cdot \wh F} $$
 where $\wt\vecb_3=(1,0,1,1)$ as defined Definition \ref{def:B-tilde}.
However, the leading blocks of length $>4$ do not satisfy \benlaw\ under $\cF$-expansion.
\end{example}

The following is an example where $f_\infty$ is analytic.
\begin{example}\rm

Let  $f : [1,\infty) \to \real$
be the function 
 given by $ f(n+p) = F_n + (F_{n+1}-F_n) p^2$ 
 where $\nN$ and $p\in[0,1)$.
 Then, $f_\infty(p)=p^2$.
 
 Let $K$ be the sequence given by $K_n=\flr{\fF(n + \fF_n \Inv(p^2))}$,
 and let $\vecb\in\cF_s$.
Then, Theorem \ref{thm:continuation}, 
\GGG{
\prob{\nN : \LB_{s}( {K_n}) =\vecb }   
 =
\sqrt{ \phi(\wt  \vecb\cdot \wh F - 1) } - 
\sqrt{ \phi(\vecb\cdot \wh F - 1) }.
}
 
\end{example}

\subsection*{Converse}

Let's consider the converse of Theorem \ref{thm:continuation}, i.e., given a sequence $K$ of positive integers 
approaching $\infty$, 
let us 
construct a uniform continuation $f$, if possible, such that $\frc{f\Inv(K_n)}$ is equidistributed.
Recall the set $\cF_s$ from Definition \ref{def:B-tilde}.

\begin{deF}\rm 
A sequence $K$ of positive integers approaching $\infty$ is said to have 
{\it  strong leading block distribution under $\cF$-expansion} if  
 $\prob{\nN : \LB_{s}(K_n)=\vecb}$ exists for each integer $s \ge 2$ and each $\vecb\in\cF_s$.

\end{deF}
 
 \begin{example}\label{exm:lucas}\rm 
Let  $K$ be the Lucas sequence, i.e.,
$K=(2,1,3,4,\dots)$ and $K_{n+2}=K_{n+1}+K_n$.
Recall that $F_n = \frac1{10}(5+\sqrt 5) \phi^n(1+o(1))$
and $K_n  = \frac12( \sqrt 5-1) \phi^n(1+o(1))$, and 
let  $\al=\frac1{10}(5+\sqrt 5) $ and $a= \frac12( \sqrt 5-1) $.
Then,  by Lemma \ref{lem:inverse},
$$
 \frc{\fF\Inv(K_n)}=-\log_\phi(a/\al) + o(1) 
 \approx .328  + o(1). 
 $$  
By Lemma \ref{lem:LB-inequality-2}, the leading block of $K_n$ being $\vecb_1=(1,0,0)$
is determined by whether $0\le \frc{\fF\Inv(K_n)}  <
 \log_\phi(1+\ome^{2}) \approx .67$.
Thus,  
$\prob{\nN : \LB_3(K_n)=\vecb_1}=1$, and 
$\prob{\nN : \LB_3(K_n)=\vecb_2}=0$.

In fact, the sequence $K$  has  strong 
\lbd.
Recall $\wh F$ from Definition \ref{def:ome}, and let us claim  that  $\vecb\cdot \wh F \ne \frac \al a=\frac1{10}(5 + 3\sqrt 5)$ for 
all $s\in\nat$ and $\vecb\in \cF_s$.
Notice that 
\begin{equation}
\frac\al a - 1 = \sum_{k=1}^\infty \ome^{4k}. \label{eq:lucas}
\end{equation} 
The equality (\ref{eq:lucas}) is called {\it the  \zec\ expansion of a real number in $(0,1)$} since
it is a power series expansion in $\ome$ where no consecutive powers are used; a formal definition is given in   Definition \ref{def:infinite-tuples} below.
By the uniqueness of \zec\ expansions of the real numbers 
in $(0,1)$, the above infinite sum in (\ref{eq:lucas}) is not equal to any finite sum
$\vecb\cdot \wh F-1$ where $\vecb\in\cF_s$; see Theorem \ref{thm:ZT-OI}. 

Let $s$ be an integer $\ge 2$, and let $\cF_s=\set{\vecb_1,\dots,\vecb_\ell}$.
Then, there is $k\in \nat$ such that 
  $\vecb_k \cdot \wh F\ <\  \frac\al a  \ <\ \vecb_{k+1} \cdot \wh F$.
This implies that 
$$
\log_\phi(\vecb_k \cdot \wh F) \ <\ \log_\phi( \tfrac\al a )\ <\ \log_\phi(\vecb_{k+1} \cdot \wh F).
$$
  Since $\frc{\fF\Inv(K_n)}= \log_\phi(\al/a) + o(1)$ for all 
  $\nN$,
  by Lemma \ref{lem:LB-inequality-2}, we have
  $\prob{\nN : \LB_s(K_n)=\vecb_k}=1$.
  For example, consider the case of $s=9$, and notice that 
  $\ome^4 +\ome^8 < \frac\al a - 1 < \ome^4 +\ome^7$ by (\ref{eq:lucas}).
  Then, we have 
  $\vecb \cdot \wh F <  \frac\al a < \wt \vecb \cdot \wh F$ where
 $$ \vecb=(1,0,0,0,1,0,0,0,1)\ \text{ and }\ 
 \wt \vecb = (1,0,0,0,1,0,0,1,0),$$
  and the probability of 
  having the leading block $\vecb$ in the values of the Lucas sequence is 
  $1$.

\end{example}
\noindent  
Recall uniform continuations from Definition \ref{def:uniform-continuation}.
Since the distribution of the leading blocks of the Lucas sequence $K$ is concentrated on one particular block in $\cF_s$ for each  $s$,
there does not exist a uniform continuation $f$, described in Theorem 
\ref{thm:continuation}, whose 
equidistribution is associated 
with the leading block distributions of the Lucas sequence $K$.
For a uniform continuation to exist, the values of the leading block distributions 
must be put together into a continuous function, and below
we formulate the requirement more precisely.

\begin{deF}\label{def:infinite-tuples}\rm
Let $\OI$ denote the interval $(0,1)$ of real numbers.
An infinite tuple $ \mu\in\prod_{k=1}^\infty \nat_0$ is 
called a {\it \zec\ expression for $\OI$} if
$ \mu(k)\le 1$, $ \mu(k)  \mu(k+1)=0$, and  for all $j\in\nat_0$,
the sequence 
$\seQ{ \mu(j+n)}$ is not equal to the sequence $ \seQ{1+(-1)^{n+1})/2}=(1,0,1,0,\dots)$.
Let $\cFst$ be the set of \zec\ expressions for $\OI$.

Given  $s\in\nat$ and $ \mu\in \cFst$, let $ \mu\vert s :=( \mu(1),\dots, \mu(s)) $.
Given $s\in\nat$ and  $\set{ \mu,\tau}\subset\cFst$, we declare $ \mu\vert s<\tau\vert s$ if $ \mu\vert s \cdot \wh F
< \tau\vert s\cdot\wh F$, which coincides with the \lex\ order on $\cF$. 
\end{deF}

\begin{notation}
\rm \label{notation:dot-product}
Given a sequence $Q$ of real numbers, and $\mu\in  \prod_{k=1}^\infty \nat_0$, 
we define $ \mu\cdot Q:=\sum_{k=1}^\infty \mu(k)Q_k$, which may or may not 
be a convergent series. 
\end{notation}

\begin{theorem}[\cite{chang}, \zec\ Theorem for $\OI$] \label{thm:ZT-OI}
Given a real number $\beta\in \OI$, there is a unique $\mu \in\cFst$ such that 
$\beta=\sum_{k=1}^\infty \mu(k) \ome^k=( \mu\cdot \wh F)\ome$.
\end{theorem}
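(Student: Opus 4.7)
The plan is to prove existence by a greedy algorithm on the powers of $\ome$ and to prove uniqueness by a tail comparison that is controlled by the identity
\[
\ome^{j} \;=\; \sum_{k=0}^{\infty} \ome^{\,j+1+2k},
\]
which comes from $1-\ome^2=\ome$ (equivalently $\phi^2=\phi+1$). This identity also clarifies why the forbidden tail pattern $(1,0,1,0,\dots)$ must be excluded: if $\mu$ were eventually $(1,0,1,0,\dots)$ from position $j+1$ on, then the partial sum $\sum_{k>j}\mu(k)\ome^{k}$ would equal $\ome^{j}$, which would also be achievable by putting a $1$ at position $j$ and $0$s thereafter, destroying uniqueness.

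For existence, given $\beta\in\OI$, I would construct $\mu$ inductively. At step $j=1$, let $k_{1}$ be the smallest positive integer with $\ome^{k_{1}}\le \beta$, set $\mu(k)=0$ for $k<k_{1}$ and $\mu(k_{1})=1$, and pass to the remainder $\beta_{1}:=\beta-\ome^{k_{1}}$. The key estimate is $\beta_{1}<\ome^{k_{1}+1}$: otherwise $\beta\ge \ome^{k_{1}}+\ome^{k_{1}+1}=\ome^{k_{1}-1}$, contradicting the minimality of $k_{1}$ (the case $k_{1}=1$ is ruled out by $\beta<1$). Iterating on $\beta_{1}$ produces a strictly increasing sequence of indices $k_{1}<k_{2}<\cdots$ with $k_{j+1}\ge k_{j}+2$, so the resulting $\mu$ has no two consecutive $1$s. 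Since $\beta_{j}<\ome^{k_{j}+1}\to 0$, the series $\sum\mu(k)\ome^{k}$ converges to $\beta$. To rule out an eventual $(1,0,1,0,\dots)$ tail, observe that such a tail starting at $j+1$ would satisfy $\sum_{k>j}\mu(k)\ome^{k}=\ome^{j}$, so $\beta_{j-1}\ge \ome^{j}+\ome^{j+2}+\cdots=\ome^{j-1}$, which would have forced the greedy algorithm to take $\mu(j-1)=1$ or an earlier index, a contradiction to the way $\mu$ was constructed.

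For uniqueness, suppose $\mu,\mu'\in\cFst$ satisfy $\mu\cdot\wh F\cdot\ome=\mu'\cdot\wh F\cdot\ome$, and let $j$ be the smallest index where they differ; WLOG $\mu(j)=1$ and $\mu'(j)=0$. Then
\[
\ome^{j}\;=\;\sum_{k>j}\bigl(\mu'(k)-\mu(k)\bigr)\ome^{k}.
\]
Because $\mu(j)=1$ forces $\mu(j+1)=0$, and because $\mu'$ has no consecutive $1$s and $\mu'(k)-\mu(k)\le \mu'(k)$, the right side is bounded above by $\sum_{k\ge 0}\ome^{\,j+1+2k}=\ome^{j}$. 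Moreover, equality in this bound forces $\mu'(k)=1$ at every odd offset from $j$ (that is, $\mu'(j+1)=\mu'(j+3)=\cdots =1$) and $\mu(k)=0$ at those positions, so $\mu'$ is eventually $(1,0,1,0,\dots)$ starting at $j+1$, which is forbidden by membership in $\cFst$. Hence the inequality is strict, contradicting the displayed equation. Thus $\mu=\mu'$.

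The main technical obstacle is the uniqueness half: one must control a signed tail sum, and the delicate point is showing that the upper bound $\ome^{j}$ is attained only by the forbidden pattern. This requires carefully exploiting both the no-consecutive-$1$s condition (which restricts how large $\mu'(k)$ can be at each step) and the precise arithmetic $1-\ome^{2}=\ome$ (which makes $\sum_{k\ge 0}\ome^{\,j+1+2k}=\ome^{j}$ exactly, not strictly less). Everything else reduces to routine inductive bookkeeping on the greedy remainders.
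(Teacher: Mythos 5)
The paper does not prove this theorem; it is imported from \cite{chang} without proof, so there is no in-paper argument to compare yours against. Your greedy-plus-extremal-tail proof is correct and self-contained. The existence half is the standard greedy algorithm: the estimate $\beta_1<\ome^{k_1+1}$ (via $\ome^{k_1}+\ome^{k_1+1}=\ome^{k_1-1}$) correctly yields the gap $k_{j+1}\ge k_j+2$ and the convergence of the remainders to $0$. The uniqueness half rests on the extremal fact that a $\{0,1\}$-tail $(c_k)_{k>j}$ with no two consecutive ones satisfies $\sum_{k>j}c_k\ome^k\le\ome^j$ with equality only for the alternating pattern; this becomes fully rigorous once you pair consecutive positions ($c_{j+1+2m}\ome^{j+1+2m}+c_{j+2+2m}\ome^{j+2+2m}\le\ome^{j+1+2m}$ for each $m\ge0$, with equality forcing $c_{j+1+2m}=1$ and $c_{j+2+2m}=0$), and it is exactly the point where the exclusion of eventually-alternating tails from $\cFst$ is needed, as you observe. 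Two small slips are worth fixing. First, in ruling out a forbidden tail in the greedy output, the chain $\beta_{j-1}\ge\ome^{j}+\ome^{j+2}+\cdots=\ome^{j-1}$ is garbled: the clean statement is that the remainder at the step where index $j+1$ is selected equals the entire tail sum $\ome^{j+1}+\ome^{j+3}+\cdots=\ome^j$, so greedy minimality would have selected index $j$ (not $j-1$) instead of $j+1$. Second, in the uniqueness equality analysis you need $\mu(k)=0$ for \emph{all} $k>j$ (to have equality in $\mu'(k)-\mu(k)\le\mu'(k)$), not only at the odd offsets; the contradiction you draw uses only the conclusion about $\mu'$, so the argument is unaffected.
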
 
\noindent
For the uniqueness of $\mu$ in the theorem, we require the infinite tuples such as
$ (0,1,0,1,0,\dots)$ to be not a member of $\cFst$ since
$\sum_{k=1}^\infty \ome^{2k} =\ome$, which is analogous to 
$0.0999\ldots=0.1$ in decimal expansion.

\begin{prop}[\cite{chang}]\label{prop:ZT-OI-order}
Let $\set{\mu,\tau}\subset \cFst$.
Then,
$\mu\cdot \wh F < \tau\cdot \wh F$ if and only if $\mu\vert s < \tau\vert s$ for some $s\in\nat$.
\end{prop}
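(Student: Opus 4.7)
The plan is to locate the first coordinate at which $\mu$ and $\tau$ differ and show that this coordinate controls the sign of $\mu\cdot\wh F - \tau\cdot\wh F$. Since Theorem \ref{thm:ZT-OI} gives uniqueness of Zeckendorf expansions for elements of $\OI$, the case $\mu = \tau$ is immediate, so we may assume $\mu \neq \tau$ and set $s_0 := \min\{k\in\nat : \mu(k) \neq \tau(k)\}$. The whole argument then turns on the key claim that the sign of $\mu\cdot\wh F - \tau\cdot\wh F$ agrees with the sign of $\mu(s_0) - \tau(s_0)$, together with the analogous statement for finite prefixes.

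Granting the claims, both implications of the proposition follow quickly. For $(\Leftarrow)$, the hypothesis $\mu\vert s\cdot\wh F < \tau\vert s\cdot\wh F$ forces $s \geq s_0$, and the prefix version of the claim rules out $\mu(s_0) > \tau(s_0)$, leaving $\mu(s_0) = 0$ and $\tau(s_0) = 1$, whence $\mu\cdot\wh F < \tau\cdot\wh F$. For $(\Rightarrow)$, the assumption $\mu\cdot\wh F < \tau\cdot\wh F$ forces $\mu(s_0) = 0 < 1 = \tau(s_0)$, and since the entries below $s_0$ agree, one has $\tau\vert s_0\cdot\wh F - \mu\vert s_0\cdot\wh F = \ome^{s_0 - 1} > 0$, so $s := s_0$ witnesses the prefix inequality.

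To prove the key claim, I would assume WLOG $\mu(s_0) = 0$ and $\tau(s_0) = 1$, and write
\begin{equation*}
\tau\cdot\wh F - \mu\cdot\wh F \;=\; \ome^{s_0-1} + \sum_{k > s_0}\bigl(\tau(k) - \mu(k)\bigr)\ome^{k-1}.
\end{equation*}
Bounding the tail from below by $-\sum_{k > s_0}\mu(k)\ome^{k-1}$, it suffices to verify the strict inequality $\sum_{k > s_0}\mu(k)\ome^{k-1} < \ome^{s_0-1}$. The Zeckendorf constraint on $\mu$ restricts its tail to $\{0,1\}$-sequences with no consecutive $1$s, whose $\ome$-weighted sum is maximized by the alternating pattern $(1,0,1,0,\dots)$; by the identity $1 - \ome = \ome^2$ (equivalently $\phi\,\ome = 1$) this supremum equals $\ome^{s_0}/(1 - \ome^2) = \ome^{s_0-1}$. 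The clause in Definition \ref{def:infinite-tuples} forbidding any tail of $\mu$ from equalling $(1,0,1,0,\dots)$ upgrades $\leq$ to strict $<$, so $\tau\cdot\wh F > \mu\cdot\wh F$. The prefix version is easier, since the tail sum $\sum_{k = s_0+1}^s \mu(k)\ome^{k-1}$ is finite and hence automatically strictly below $\ome^{s_0-1}$.

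The main obstacle is exactly this strict tail bound in the infinite case: a raw geometric series estimate only yields $\leq$, and equality holds precisely when the tail of $\mu$ is the forbidden alternating pattern. This is the Zeckendorf analogue of the ``$0.0999\ldots = 0.1$'' phenomenon that motivated excluding eventually-alternating tuples in the definition of $\cFst$, and isolating this single edge case is the technical heart of the argument.
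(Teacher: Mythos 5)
The paper does not actually prove this proposition---it is imported verbatim from \cite{chang}---so there is no internal argument to compare yours against; judged on its own, your proof is correct and is the natural one. The reduction to the first index of disagreement $s_0$, the lower bound on the tail by $-\sum_{k>s_0}\mu(k)\ome^{k-1}$, and the computation $\ome^{s_0}/(1-\ome^2)=\ome^{s_0-1}$ (via $\ome+\ome^2=1$) are all right, and you correctly identify that the entire content is the strict tail bound, i.e., the Zeckendorf analogue of $0.0999\ldots=0.1$, which is exactly why Definition \ref{def:infinite-tuples} excludes eventually-alternating tails. The one step you assert rather than prove is that the alternating pattern is the \emph{unique} maximizer among no-consecutive-ones tails, so that excluding it upgrades $\le$ to $<$. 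This is true but deserves two lines: if the tail first deviates from $(1,0,1,0,\dots)$ at position $m$ past $s_0$, the no-consecutive-ones condition forces that deviation to be a $1\to 0$ change (a $0\to 1$ change would create adjacent ones with the preceding matched entry), and the remaining sum from position $m$ onward is then at most $\ome^{m+1}+\ome^{m+3}+\cdots=\ome^{m}$, which falls short of the alternating pattern's $\ome^{m-1}$ by $\ome^{m+1}>0$; summing, the deficit is strictly positive. With that inserted, the argument is complete. (A cosmetic note: the parenthetical ``equivalently $\phi\,\ome=1$'' is just the definition of $\ome$; the identity you actually use is $1-\ome^2=\ome$.)
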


Given a sequence with strong \lbd, 
we shall construct a function on $\OI$ in Definition \ref{def:f-star} below, and 
it is well-defined by Lemma \ref{lem:well-defined}.

\begin{lemma}\label{lem:well-defined}
Given a real number $\beta\in\OI$,
there is a unique $\mu\in\cFst$ such that  $\mu(1)=1$ and
 $\phi(\mu\cdot \wh F  - 1) =\beta$.
\end{lemma}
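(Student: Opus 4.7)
The plan is to reduce the statement to Theorem \ref{thm:ZT-OI} via a shift of indices, observing that multiplying by $\phi$ amounts to shifting the index of $\wh F$ by one, and subtracting $1$ strips off the fixed entry $\mu(1)=1$.

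First I would unpack the target expression. If $\mu \in \cFst$ with $\mu(1)=1$, then $\mu(2)=0$ is forced by the no-consecutive-$1$s condition. Using $\wh F_n = \omega^{n-1}$ and the identity $\phi = \omega^{-1}$, one computes
\[
\phi(\mu \cdot \wh F - 1) \;=\; \phi \sum_{k=3}^{\infty} \mu(k)\,\omega^{k-1} \;=\; \sum_{k=3}^{\infty} \mu(k)\,\omega^{k-2} \;=\; \sum_{j=1}^{\infty} \tau(j)\,\omega^{j},
\]
where $\tau$ is defined by $\tau(j) := \mu(j+2)$. So the equation $\phi(\mu \cdot \wh F - 1)=\beta$ becomes exactly $\beta = \sum_{j=1}^\infty \tau(j)\omega^j$, which is the setting of Theorem \ref{thm:ZT-OI}.

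Next I would establish the bijection between $\mu$ and $\tau$. Given $\beta\in\OI$, apply Theorem \ref{thm:ZT-OI} to obtain the unique $\tau\in\cFst$ with $\beta=\sum_{j=1}^{\infty} \tau(j)\omega^j$, and define $\mu$ by $\mu(1):=1$, $\mu(2):=0$, $\mu(j+2):=\tau(j)$ for $j\ge 1$. I would then verify $\mu\in\cFst$ by checking the two defining conditions. The condition $\mu(k)\mu(k+1)=0$ holds at $k=1$ since $\mu(2)=0$, at $k=2$ trivially, and for $k\ge 3$ from the corresponding property of $\tau$. The more delicate condition is that no tail of $\mu$ equals $(1,0,1,0,\ldots)$: for tails starting at index $\ge 3$ this is inherited from $\tau\in\cFst$; the tail starting at index $2$ begins with $0$, so is automatic; and the tail starting at index $1$ begins $(1,0,\tau(1),\tau(2),\ldots)$, which would equal $(1,0,1,0,\ldots)$ only if $\tau$ itself were $(1,0,1,0,\ldots)$, and this is excluded from $\cFst$.

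Finally, uniqueness follows by running this reduction in reverse: any $\mu\in\cFst$ with $\mu(1)=1$ and $\phi(\mu\cdot\wh F - 1)=\beta$ produces the shifted sequence $\tau=(\mu(3),\mu(4),\ldots)\in\cFst$ representing $\beta$ in the sense of Theorem \ref{thm:ZT-OI}, and the uniqueness there forces $\tau$, hence $\mu$, to coincide with the one just constructed. There is no real obstacle here; the one spot that demands care is the "not eventually $(1,0,1,0,\ldots)$" clause, which has to be transferred carefully across the two-position shift, as above.
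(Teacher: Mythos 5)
Your proof is correct. Both you and the paper reduce the lemma to Theorem \ref{thm:ZT-OI}, but by different reductions. The paper applies that theorem to the transformed number $\ome+\beta\ome^2\in(0,1)$, so the resulting $\mu$ satisfies $\phi(\mu\cdot\wh F-1)=\beta$ at once and uniqueness is inherited verbatim; the condition $\mu(1)=1$ is then forced by an order comparison via Proposition \ref{prop:ZT-OI-order}. You instead apply the theorem to $\beta$ itself and prepend the block $(1,0)$ to the resulting expansion $\tau$, which trades the appeal to the order proposition for a direct verification that the glued tuple lies in $\cFst$ --- and you correctly flag the one delicate point, namely transferring the ``no tail equals $(1,0,1,0,\dots)$'' clause across the two-position shift, handling all three cases of where the tail starts. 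The two arguments are the same in substance (multiplication by $\ome^2$ is the index shift, addition of $\ome$ is the prepended block), but yours is self-contained at the level of tuples and avoids Proposition \ref{prop:ZT-OI-order}, while the paper's stays at the level of real numbers and gets the combinatorial legality of $\mu$ for free from Theorem \ref{thm:ZT-OI}.
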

\begin{proof}
Let $\wh F^*$ be the sequence defined by $\wh F^*_n=\ome^{n}$.
Given a real number $\beta\in\OI$, we have $0<\ome +\beta\ome^2<1$.
By
Theorem \ref{thm:ZT-OI},
there are is $\mu\in\cFst$ such that 
$(\mu\cdot \wh F)\ome =\mu\cdot \wh F^*= \ome +\beta\ome^2$, which implies $\phi(\mu\cdot \wh F  - 1) =\beta$.
We claim that $\mu(1)=1$.
If $\mu(1)=0$, then by Proposition \ref{prop:ZT-OI-order},
$\ome+\beta\ome^2 = \mu \cdot \wh F^*=(0,\dots)\cdot\wh F^* < \ome=(1,0,0,\dots)\cdot \wh F^* $,
which implies a  false statement $\beta \ome^2<0$.
Thus, $\mu(1)=1$.
\end{proof}

Recall from Definition \ref{def:infinite-tuples} the definition of inequalities on tuples.
\begin{deF}\label{def:f-star}  \rm   
Let $K$ be a sequence of positive integers with strong \lbd\ under $\cF$-expansion such that
given $\mu\in\cFst$  and an integer $s\ge 2$ such that $\mu(1)=1$,
the following limit exists:
\begin{equation}
\lim_{s\to\infty}\prob{\nN : \LB_s(K_n)\le  \mu\vert s}  \label{eq:f-star}
\end{equation} 
where $\mu\vert s$ is identified in $\cF_s$.

Let $f_K^* : [0,1]\to[0,1]$ be the function given by 
$f_K^*(0)=0$, $f_K^*(1)=1$, and $
f_K^*(\phi(\mu\cdot\wh F -1))$ is equal to the value in (\ref{eq:f-star}).
 If $f_K^*$ is continuous and increasing, 
then $K$ is said to {\it have continuous \lbd\ under $\cF$-expansion}.
\end{deF}

\begin{lemma}\label{lem:finite-cf}
Let $K$ be a sequence with continuous leading block distribution under 
$\cF$-expansion, and let $f_K^*$ be the function defined
in Definition \ref{def:f-star}. 
Let $\mu\in\cFst$ 
such that there is $t\in\nat$ such that $\mu(1)=1$ and  $\mu(k)=0$ for all $k>t$. 
Then, $f_K^*(\phi( {\mu\vert t}\cdot\wh F -1))
\ \le\ \prob{\nN : \LB_t(K_n)\le \mu\vert t} 
$.
\end{lemma}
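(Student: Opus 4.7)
The plan is to evaluate the left-hand side through the defining limit of $f_K^*$ and to bound each term of that limit by the right-hand side via a set inclusion. First, I would observe that $\mu(k)=0$ for every $k>t$ forces the truncations $\mu\vert s$ to have a constant dot product with $\wh F$ once $s\ge t$, namely $\mu\vert s\cdot\wh F=\mu\vert t\cdot\wh F=\mu\cdot\wh F$. Consequently Definition \ref{def:f-star} yields
\GGG{
f_K^*\bigl(\phi(\mu\vert t\cdot\wh F-1)\bigr)\ =\ f_K^*\bigl(\phi(\mu\cdot\wh F-1)\bigr)\ =\ \lim_{s\to\infty}\prob{\nN:\LB_s(K_n)\le\mu\vert s}.
}

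The heart of the argument is the set inclusion
\GGG{
\set{\nN:\LB_s(K_n)\le\mu\vert s}\ \subseteq\ \set{\nN:\LB_t(K_n)\le\mu\vert t}\quad\text{for every }s\ge t.
}
To establish it, suppose $\LB_s(K_n)\le\mu\vert s$, so in particular $\LB_s(K_n)$ is defined. Writing $\LB_s(K_n)=(\LB_t(K_n),v_{t+1},\dots,v_s)$ with $v_k\in\set{0,1}$ and using that $\mu\vert s$ ends with $s-t$ zeros, I would compute
\GGG{
\LB_t(K_n)\cdot\wh F\ \le\ \LB_t(K_n)\cdot\wh F+\sum_{k=t+1}^s v_k\ome^{k-1}\ =\ \LB_s(K_n)\cdot\wh F\ \le\ \mu\vert s\cdot\wh F\ =\ \mu\vert t\cdot\wh F,
}
and then invoke the identification of the tuple order with the order of dot products (Definition \ref{def:infinite-tuples}) to conclude $\LB_t(K_n)\le\mu\vert t$.

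To finish, I would note that $\set{\nN:\LB_t(K_n)\le\mu\vert t}$ is a finite union of level sets $\set{\nN:\LB_t(K_n)=\vecc}$ for $\vecc\in\cF_t$ with $\vecc\le\mu\vert t$, each of which has a density by the hypothesis that $K$ has continuous leading block distribution; hence the right-hand density exists. The inclusion then gives $\prob{\nN:\LB_s(K_n)\le\mu\vert s}\le\prob{\nN:\LB_t(K_n)\le\mu\vert t}$ for every $s\ge t$, and letting $s\to\infty$ produces the stated inequality. The only subtle point is keeping track of lexicographic comparisons between tuples of different lengths; this is the main (but very mild) obstacle, and it is handled cleanly by the zero-padding observation in the first step.
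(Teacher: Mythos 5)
Your proposal is correct and follows essentially the same route as the paper's proof: the set inclusion $\set{\nN:\LB_s(K_n)\le\mu\vert s}\subseteq\set{\nN:\LB_t(K_n)\le\mu\vert t}$ for $s\ge t$, monotonicity of the densities, passage to the limit $s\to\infty$ via Definition \ref{def:f-star}, and the identity $\mu\vert t\cdot\wh F=\mu\cdot\wh F$. The only difference is that you spell out the justification of the inclusion (via dot products and zero-padding) and the existence of the right-hand density, both of which the paper leaves implicit.
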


\begin{proof}
Notice that if $s>t$, then
\AAA{
&\ \set{\nN : \LB_s(K_n)\le \mu\vert s}\subset
\set{\nN : \LB_t(K_n)\le \mu\vert t}\\
\implies\  &\ 
\prob{\nN : \LB_s(K_n)\le \mu\vert s}\ \le\ 
\prob{\nN : \LB_t(K_n)\le \mu\vert t}\\
&\ \lim_{s\to\infty}
\prob{\nN : \LB_s(K_n)\le \mu\vert s}
=f_K^*(\phi( {\mu}\cdot\wh F -1))\ \le\ 
\prob{\nN : \LB_t(K_n)\le \mu\vert t}\\
\intertext{Since ${\mu\vert t}\cdot\wh F = {\mu}\cdot\wh F$,}
\implies\ &\ 
f_K^*(\phi( {\mu\vert t}\cdot\wh F -1))\ \le\ 
\prob{\nN : \LB_t(K_n)\le \mu\vert t}.
}
\end{proof}

Recall uniform continuations from Definition \ref{def:uniform-continuation}.
\begin{theorem}\label{thm:f-star}
Let $K$ be a sequence with continuous leading block distribution under 
$\cF$-expansion.
Let $f_K^*$ be the function defined in Definition 
\ref{def:f-star}.
Then, there is a uniform continuation $f$ of $F $ such that 
$f_\infty\Inv=f_K^*$ and 
$\frc{f\Inv(K_n)}$ is equidistributed.
\end{theorem}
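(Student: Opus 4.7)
The plan is to reverse-engineer $f$ from $f_K^*$ by a simple interpolation between consecutive \fib\ terms, and then verify the equidistribution of $\frc{f\Inv(K_n)}$ by sandwiching the event $\set{\frc{f\Inv(K_n)}\le \beta}$ between two leading-block events whose $f_K^*$-values cluster around $\beta$.

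First I would set $f_\infty := (f_K^*)\Inv : [0,1]\to[0,1]$, which is a continuous strictly increasing bijection since $f_K^*$ is. I then extend it to a continuation of $F$ by
\[
f(n+p)\ :=\ F_n + f_\infty(p)\,(F_{n+1}-F_n),\qquad n\in\nat,\ p\in[0,1).
\]
This $f$ is continuous, strictly increasing on $[1,\infty)$, and equal to $F$ on $\nat$. Moreover the normalized functions $f_n$ of Definition \ref{def:uniform-continuation} are identically $f_\infty$, so the convergence $f_n\to f_\infty$ is trivially uniform; thus $f$ is a uniform continuation of $F$, and $f_\infty\Inv = f_K^*$ holds by construction.

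The substantive step is the equidistribution. I would fix $\beta\in(0,1)$ and, by Lemma \ref{lem:well-defined}, pick the unique $\mu\in\cFst$ with $\mu(1)=1$ and $\phi(\mu\cdot\wh F - 1) = f_\infty(\beta)$. From the definition of $f$, for the $m$ with $F_m\le K_n<F_{m+1}$,
\[
\frc{f\Inv(K_n)}\le \beta\ \iff\ K_n\ \le\ F_m + f_\infty(\beta)(F_{m+1}-F_m).
\]
Given $\ep>0$, the continuity of $f_K^*$ together with Proposition \ref{prop:ZT-OI-order} lets me choose a large $s$ and finitely-supported $\mu_1,\mu_2\in\cFst$ whose truncations $\vecb_i:=\mu_i\vert s\in\cF_s$ satisfy $\vecb_1 < \mu\vert s \le \vecb_2$ and $\abs{f_K^*(\phi(\vecb_i\cdot\wh F - 1)) - \beta} < \ep$ for $i=1,2$. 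A direct estimate using $F_m = \al\phi^m + O(\phi^{-m})$ shows that, once $s$ is large enough, the implications
\[
\LB_s(K_n)\le \vecb_1 \ \Longrightarrow\ \frc{f\Inv(K_n)}\le \beta\ \Longrightarrow\ \LB_s(K_n)\le \vecb_2
\]
hold for all sufficiently large $n$. Passing to proportions in $\Omega_N$ as $N\to\infty$, Lemma \ref{lem:finite-cf} applied to $\mu_1$ yields the lower bound, while applying the same lemma to the successor of $\vecb_2$ in $\cF_s$ (together with the complement identity $\prob{\nN:\LB_s(K_n)\le\vecb_2} = 1 - \prob{\nN:\LB_s(K_n)\ge\vecb_2^+}$, valid because continuous \lbd\ entails strong \lbd) yields the upper bound. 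Both bounds lie within $\ep$ of $\beta$; letting $\ep\to 0$ gives $\prob{\nN:\frc{f\Inv(K_n)}\le\beta}=\beta$, finishing the proof.

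The hard part will be the sandwich in the middle step: converting the analytic inequality $K_n\le F_m + f_\infty(\beta)(F_{m+1}-F_m)$ into a clean combinatorial statement on $\LB_s(K_n)$. The $\cF$-digits of $K_n$ beyond position $s$ can perturb $K_n$ by an amount of order $\phi^{m-s}$, which is comparable to the spacings between neighbouring values $\phi(\vecb\cdot\wh F - 1)$ for $\vecb\in\cF_s$. The continuity of $f_K^*$ is precisely the hypothesis that produces blocks $\vecb_1,\vecb_2$ with $f_K^*$-values bracketing $\beta$ tightly, so that enlarging $s$ simultaneously shrinks the analytic perturbation and the $\ep$-error budget; without continuity the squeeze would fail because the bracketing values of $f_K^*$ need not approach $\beta$.
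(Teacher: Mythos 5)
Your construction of $f$ and the overall architecture coincide with the paper's proof: the paper likewise sets $f(n+p)=F_n+(F_{n+1}-F_n)(f_K^*)\Inv(p)$, picks the unique $\mu\in\cFst$ with $\mu(1)=1$ and $\phi(\mu\cdot\wh F-1)=f_\infty(\beta)$ via Lemma \ref{lem:well-defined}, and traps the event $\frc{f\Inv(K_n)}\le\beta$ between two leading-block events of a fixed length, using Lemma \ref{lem:finite-cf} on the lower side and letting the length tend to infinity. (For the lower block the paper descends \emph{two} steps below $\mu\vert t$ rather than one, so that the $o(1)$ errors coming from Lemma \ref{lem:LB-inequality-2} are absorbed by a strict gap between the $\cdot\,\wh F$ values; your condition $\vecb_1<\mu\vert s$ leaves room for the same precaution, so this is not a problem.)

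The one step that does not work as written is your upper bound. Lemma \ref{lem:finite-cf} only ever produces inequalities of the form $f_K^*(\phi(\vecb\cdot\wh F-1))\le\prob{\nN:\LB_s(K_n)\le\vecb}$, i.e.\ \emph{lower} bounds on the cumulative probabilities. Applying it to the successor $\wt{\vecb_2}$ gives a lower bound on $\prob{\nN:\LB_s(K_n)\le\wt{\vecb_2}}$, and the complement identity converts this into nothing useful about $\prob{\nN:\LB_s(K_n)\le\vecb_2}$: the two cumulative probabilities differ by $\prob{\nN:\LB_s(K_n)=\wt{\vecb_2}}$, which you do not control, and no combination of the one-sided bounds from Lemma \ref{lem:finite-cf} with complementation reverses the direction of the inequality. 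The correct, and simpler, mechanism is the one the paper uses: take $\vecb_2=\mu\vert s$ itself, so that the implication $\frc{f\Inv(K_n)}\le\beta\implies\LB_s(K_n)\le\mu\vert s$ (valid for large $n$ because $\mu\cdot\wh F$ is strictly less than $\wt{\mu\vert s}\cdot\wh F$) yields $\limsup_{N}\prob{k\in\Omega_N:\frc{f\Inv(K_k)}\le\beta}\le\prob{\nN:\LB_s(K_n)\le\mu\vert s}$ for every $s$; then let $s\to\infty$ and invoke the defining limit (\ref{eq:f-star}), which says the right-hand side converges to $f_K^*(\phi(\mu\cdot\wh F-1))=\beta$. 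With that substitution your argument closes, and everything else matches the paper's proof.
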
 

 \begin{proof}
Let $f : [1,\infty)\to \real $ be the function given by 
$f(x) = F_n + (F_{n+1}-F_n)(f_K^*)\Inv(p)$
where $x=n+p$ and $p=\frc{x}$.
Then, $f$ is a uniform continuation of $F_n$ since  $(f_K^*)\Inv$ is independent of $n$. Then, $f_\infty=(f_K^*)\Inv$, i.e., 
$f_\infty\Inv=f_K^*$.

Let $\beta\in(0,1)$ be a real number, and below
we show that $\prob{\nN : \frc{f\Inv(K_n)} \le \beta}$
exists, and it is equal to $\beta$.
Recall $\fF$ from Definition \ref{def:alpha}
and $\fF_n$ from Definition \ref{def:fFn}.
Let $\nN$, and let $m\in \nat$ such that $F_m \le K_n < F_{m+1}$.
Then, 
$K_n = f(m+p_n')=\fF(m+p_n)$ where $p_n,p_n'\in[0,1]$, i.e.,
$f_\infty(p_n')=\fF_m(p_n)$.
By Theorem \ref{thm:ZT-OI} and Lemma \ref{lem:well-defined}, 
there is a unique $\mu\in\cFst$ such that 
  $f_\infty(\beta) = \phi(\mu\cdot\wh F-1)$ and $\mu(1)=1$. 
  Recall $\fF_\infty$ from Definition \ref{def:fFn}.
Notice that  
\GGG{
\frc{f\Inv(K_n)}\ =\ p_n'\ \le\ \beta
\implies 
f_\infty\Inv(\fF_m(p_n)) \ \le\  \beta
\ \implies\ 
p_n\le \fF_m\Inv(f_\infty(\beta))\\
\ \implies\  \frc{\fF\Inv(K_n))}\ \le\ \fF_m\Inv(f_\infty(\beta))
\ =\ \fF_\infty\Inv(f_\infty(\beta))+o(1)
\ =\  \log_\phi(\mu\cdot \wh F) + o(1).\\
\intertext{Fix an integer $t\ge 2$.
By Proposition \ref{prop:ZT-OI-order},
we have $\mu\cdot \wh F = \mu\vert t \cdot \wh F + \gamma_t
< \wt{\mu\vert t} \cdot \wh F$ where $\gamma_t\ge 0$ and $\wt{\mu\vert t}\in \cF_t$ is as defined 
Definition \ref{def:B-tilde}.
Since $ \log_\phi( \wt{\mu\vert t}\cdot \wh F)- \log_\phi( \mu\cdot \wh F)>0$, 
there is $M_t\in\nat$ such that  for all $n\ge M_t$,}
\ \implies\ 
\frc{\fF\Inv(K_n))} \ \le\  
 \log_\phi( \mu\cdot \wh F)+ o(1)
 \ <\ 
 \log_\phi(\wt{\mu\vert t}\cdot \wh F) . \\
\intertext{By Lemma \ref{lem:LB-inequality-2}, 
this implies $\LB_t(K_n)\le  \mu\vert t$.
Recall $\Omega_n=\set{k\in\nat : k\le n}$; } 
\prob{ k\in \Omega_n : \frc{f\Inv(K_k)}  \le \beta }+o(1)
\ \le\ 
 \prob{k\in \Omega_n :\LB_t(K_k)\le \mu\vert t }+o(1)\\
 \ \implies\ 
\limsup_n \prob{ k\in \Omega_n : \frc{f\Inv(K_k)}  \le \beta } 
\ \le\ 
 \prob{\nN :\LB_t(K_n)\le \mu\vert t }. 
}

Let us work on the $\liminf$ of the probability.
Since $\beta\ne0$, there is $t_0>1$ such that 
$\mu(t_0)>0$.
Thus, if $t>t_0$ is sufficiently large, then
there are at least two entries $1$ in $\mu\vert t$, and $\mu\vert t$ has more entries after the second entry of $1$ from the left.
Recall the product $*$ from Definition \ref{def:products}.
This choice of $t$ allows us to have 
 the unique \cf s $\check \mu $ and $\wh{\mu}$   in $\cF_t$
such that $1+\check \mu * F = \wh \mu * F$
and $1+\wh \mu * F =  \mu\vert t * F$. 
Then, by Lemma \ref{lem:LB-inequality-2},
\AAA{
 &  \LB_t (K_n)\le \check \mu 
  \ \implies\  
\frc{\fF\Inv(K_n) } \ <\  \log_\phi(\wh \mu\cdot \wh F) + o(1)
\\
  \ \implies\ \ & 
p_n\ <\   \fF_m\Inv(\phi(\wh \mu\cdot \wh F-1)) +o(1) 
\\
 \ \implies\ \  & 
\fF_m(p_n)\ =\ f_\infty(p_n')\ <\   \phi(\wh \mu\cdot \wh F-1) + o(1)\\  
&
\begin{aligned}
\makebox[0ex][r]{$\ \implies\ $\ } p_n'\ =\ \frc{f\Inv(K_n)} &
   \ <\   f_\infty\Inv(\phi(\wh \mu\cdot \wh F-1)) + o(1) 
 \\ &\ <\ f_\infty\Inv(\phi( \mu\vert t \cdot \wh F-1)) 
 \quad\text{by Proposition \ref{prop:ZT-OI-order},}\\
 &  
\ \le\  f_\infty\Inv(\phi( \mu\cdot \wh F-1))  \ =\ \beta
\end{aligned}
\\
  \ \implies\ \  & 
\prob{k\in \Omega_n : \LB_t (K_k)\le \check \mu }
+o(1) \ \le\  \prob{ k\in \Omega_n : \frc{f\Inv(K_k)}  \le \beta }+o(1) \\
  \ \implies\ \  & 
\prob{\nN  : \LB_t (K_n)\le \check \mu } 
 \ \le\ \liminf_n\ \prob{ k\in \Omega_n : \frc{f\Inv(K_k)}  \le \beta }\\
 \intertext{By Lemma \ref{lem:finite-cf},}
 & f_\infty\Inv(\phi( \check \mu\cdot \wh F-1))
  \ \le\ \liminf_n\ \prob{ k\in \Omega_n : \frc{f\Inv(K_k)}  \le \beta }.
}
It is given that $\prob{\nN :\LB_t(K_n)\le \mu\vert t }\to 
f_\infty\Inv(\phi( \mu\cdot \wh F-1))$ as $t\to \infty$.
Let us calculate the other bound;
\AAA{
2+\check \mu * F &\ =\  \mu\vert t  * F
\ \implies\  \ 
2+ \sum_{k=1}^t \check \mu(k) F_{t-k+1}
= \sum_{k=1}^t  \mu (k) F_{t-k+1}\\
\ \implies\ &\ 
2+ \sum_{k=1}^t \check \mu(k)
 	\left(\al \phi^{t-k+1} + O(\phi^{-t+k-1})\right)
\ =\  \sum_{k=1}^t  \mu(k) 
	\left(\al \phi^{t-k+1} + O(\phi^{-t+k-1})\right)\\
\ \implies\ &\ 
O(1)+ \al \sum_{k=1}^t \check \mu(k)
  \phi^{t-k+1} 
\ =\ \al \sum_{k=1}^t  \mu(k)   \phi^{t-k+1}\\
\ \implies\ &\ 
O(\phi^{-t})+ \sum_{k=1}^t \check \mu(k)
 \ome^{k-1}
\ =\  \sum_{k=1}^t  \mu(k)  \ome^{k-1}\\
\ \implies\ &\ 
o(1)+ \check \mu\cdot \wh F\ =\  \mu\vert t \cdot \wh F 
\ \implies\  
\check \mu  \cdot \wh F \to \mu \cdot \wh F\\
\ \implies\ &\ 
f_\infty\Inv( \phi(\check \mu\cdot \wh F -1))
\to f_\infty\Inv( \phi(  \mu\cdot \wh F -1))\ =\ \beta.
}
\end{proof}

It is clear that if $f$ is a uniform continuation of $F$, and $K$ is 
a sequence  of positive integers approaching $\infty$ such that 
$\frc{f\Inv(K_n)}$ is equidistributed,  
then, by Lemma \ref{lem:LB-inequality-2}, $K$ has continuous \lbd\ under $\cF$-expansion.
Therefore, we have the following.
\begin{theorem}\label{thm:equivalence}
Let $K$ be a sequence of positive integers approaching $\infty$.
Then, $K$  has 
continuous \lbd\ under $\cF$-expansion
if and only if there is a uniform continuation $f$ of $F $ such that $\frc{f\Inv(K_n)}$ is equidistributed.
\end{theorem}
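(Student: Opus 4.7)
The plan is to derive Theorem \ref{thm:equivalence} by stitching together two results already established: Theorem \ref{thm:continuation} for one direction and Theorem \ref{thm:f-star} for the other. Each half reduces to essentially a single citation plus a short telescoping argument, so the overall proof will be brief.

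For the direction ``equidistribution $\Rightarrow$ continuous leading block distribution'', I would start from Theorem \ref{thm:continuation}, which gives, for every $s\ge 2$ and every $\vecb\in\cF_s$,
\[
\prob{\nN : \LB_s(K_n) = \vecb} \ =\ f_\infty\Inv\!\left(\phi(\wt\vecb\cdot\wh F - 1)\right) - f_\infty\Inv\!\left(\phi(\vecb\cdot\wh F - 1)\right).
\]
Fix $\mu\in\cFst$ with $\mu(1)=1$. Using the succession $\vecb\mapsto\wt\vecb$ built into Definition \ref{def:B-tilde} to enumerate $\cF_s$ in order, I sum these probabilities over $\vecb\le\mu\vert s$: the base term uses $\vecb_1\cdot\wh F = 1$ so that $f_\infty\Inv(0)=0$, and the telescoping collapses the sum to
\[
\prob{\nN : \LB_s(K_n)\le\mu\vert s} \ =\ f_\infty\Inv\!\left(\phi(\wt{\mu\vert s}\cdot\wh F - 1)\right).
\]
As $s\to\infty$, the gap $\wt{\mu\vert s}\cdot\wh F - \mu\vert s\cdot\wh F$ is $O(\phi^{-s})$, so both sides converge to $\mu\cdot\wh F$. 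By continuity of $f_\infty\Inv$ on $[0,1]$, the limit in Definition \ref{def:f-star} exists and $f_K^*(\phi(\mu\cdot\wh F - 1)) = f_\infty\Inv(\phi(\mu\cdot\wh F - 1))$. Since Lemma \ref{lem:well-defined} shows the parametrization $\mu\mapsto\phi(\mu\cdot\wh F-1)$ is surjective onto $[0,1]$, we obtain $f_K^* = f_\infty\Inv$ globally, and it is continuous and increasing because $f_\infty$ is (Definition \ref{def:uniform-continuation}); hence $K$ has continuous leading block distribution.

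For the converse there is no new work to do: given $K$ with continuous leading block distribution, Theorem \ref{thm:f-star} directly constructs the uniform continuation $f(n+p) = F_n + (F_{n+1}-F_n)(f_K^*)\Inv(p)$ of $F$, for which $f_\infty\Inv = f_K^*$ and $\frc{f\Inv(K_n)}$ is equidistributed. The only step in the whole argument that requires any care is the telescoping in the forward direction together with the limit $\wt{\mu\vert s}\cdot\wh F \to \mu\cdot\wh F$, but because $\cF_s$ is already enumerated in the right order by $\vecb\mapsto\wt\vecb$, no genuine obstacle arises. Theorem \ref{thm:equivalence} is therefore a short corollary of Theorems \ref{thm:continuation} and \ref{thm:f-star}.
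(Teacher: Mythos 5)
Your proof is correct and follows the same route as the paper: the paper obtains one direction by citing Theorem \ref{thm:f-star}, exactly as you do, and disposes of the other with a one-line appeal to Lemma \ref{lem:LB-inequality-2} (equivalently, the probability formula of Theorem \ref{thm:continuation}). Your telescoping computation identifying $f_K^*$ with $f_\infty\Inv$ simply spells out in detail what the paper declares to be clear.
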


\section{Benford's Law under generalized \zec\ expansion }
\label{sec:general-\zec}
 
The contents in Sections \ref{sec:benford}, \ref{sec:calculations}, and 
\ref{sec:Other-continuations} are for \zec\ expansion, but
the arguments of the proofs apply to the setup for generalized \zec\ expansion
without difficulties.
In this section, we introduce definitions and results for generalized \zec\ expansion without proofs,
 but only refer to the corresponding theorems for \zec\ expansion proved in the earlier sections.

\subsection{Generalized \zec\ expansion}
 Let us review the generalized \zec\ expansion. 
 Recall $\nat_0$ from Definition \ref{def:N0}
 \begin{deF}\label{def:GZE} \rm
Given a tuple $L=(a_1,a_2,\dots,a_N)\in \nat_0^N$ where $N\ge 2$ and 
$a_1> 0$,
let $\Theta$ be the following infinite tuple in $\prod_{k=1}^\infty \nat_0$:
$$(a_1,a_2,\dots, a_{N-1}, a_N, a_1,a_2,\dots, a_{N-1}, a_N,\dots)$$
where the finite tuple $( a_1,a_2,\dots, a_{N-1}, a_N)$ repeats.
Let
$\Theta(k)$ denote the $k$th entry of $\Theta$, and let 
$\Theta\vert s = (\Theta(1),\dots,\Theta(s))$ for $s\in\nat$.

Recall $\len$ from Definition \ref{def:products}.
Let $\cH^\circ$ be the recursively-defined set of tuples $\ep$ with arbitrary  finite  length   such that 
 $\ep\in\cH^\circ$ if and only if 
 there is   smallest $s\in\nat_0$ such that $\ep\vert s = \Theta\vert s$,
 $\ep(s+1)<\Theta(s+1)$, and 
 $(\ep(s+2),\dots,\ep(n))\in \cH^\circ$ where $n=\len(\ep)$ and  $s$ is allowed to be $ \len(\ep)$.
 Let $\cH:=\set{\ep\in \cH^\circ : \ep(1)>0}$.
The set $\cH$ is called a {\it periodic \zec\ collection of \cf s for positive integers},
and $L$ is called {\it a principal maximal block of the periodic \zec\ collection $\cH$}.
\end{deF} 
\noindent
Notice that if $L=(1,0,1,0)$ is a principal maximal block of the periodic \zec\ collection $\cH$,
then  $L'=(1,0)$ is a principal maximal block of $\cH$ as well.
For this reason, the indefinite article was used in the statement of the definition of principal maximal blocks.
\begin{example}\rm
Let $\cH$ be the (periodic) \zec\ collection determined by the principal maximal block
  $L=(3,2,1)$.  Then, $\Theta=(3,2,1,3,2,1,\dots)$, and  $(0)$ and $(3,2,1)$ are members of $\cH^\circ$.
  For $(0)\in \cH^\circ$, we set $s=0$ in Definition \ref{def:GZE},
  and for $(3,2,1)\in\cH^\circ$, we set $s=3$.
  
Let $\ep=(3,2,0)$ and $\mu=(3,1,3,2,0)$.
For $\ep$, if $s=2$, by the definition, we have $\ep\in\cH$.
For $\mu$, if $s=1$, then $\mu\vert 1 = \Theta\vert 1$,
 $\mu(2)<\Theta(2)$, and 
 $(\mu(3),\dots,\mu(5))=\ep\in \cH^\circ$.
 Listed below are more examples of members of $\cH$:
 $$
(3,2,1,3,2,1),\ (3,0,0,3),\ (1,2,3,1,0,3),\ (1,2,3,1,1,0). $$
\end{example}

Recall the product notation from Definition \ref{def:products}
\begin{deF}
\rm \label{def:FS}
Let $\cH$ be a set of \cf s, and let $H$ be an increasing sequence of positive integers.
If given $n\in \nat$, there is a unique $\ep\in\cH$ such that $\ep * H=n$,
then $H$ is called a {\it \funds} of $\cH$, and the expression $\ep * H$ is called an {\it $\cH$-expansion}.
\end{deF}
If $\cH$ is a periodic \zec\ collection for positive integers,
then, by Theorem \ref{thm:funds} below,
   there is a unique \funds\ of $\cH$.
\begin{theorem}[\cite{chang,mw}] \label{thm:funds}
Let $\cH$ be a periodic \zec\ collection, and let $L=(a_1,\dots,a_N)$ be its principal maximal block.
Then, there is a unique \funds\ $H$ of $\cH$, and it is given by the following recursion:
\begin{gather}
H_{n+N}\ =\ a_1 H_{n+N-1} +\cdots 
+ a_{N-1} H_{n+1}+ (1+a_N) H_n\ \text{ for all $\nN$, and }\label{eq:H-recursion} \\
 H_n\ =\ 1+\sum_{k=1}^{n-1} a_k H_{n-k}\ \text{ for all $1\le n\le N+1$.}\notag
\end{gather} 
\end{theorem}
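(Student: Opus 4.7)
The plan is to prove the three assertions—existence of a fundamental sequence, uniqueness of the fundamental sequence, and uniqueness of the $\cH$-expansion—by constructing $H$ via the stated recursion, proving a sharp upper bound on $\ep * H$ for $\ep\in \cH^\circ$ in terms of the length, and then running a greedy algorithm.

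\textbf{Step 1 (Define $H$ and verify consistency).} First I would define $H$ by the given initial values $H_n=1+\sum_{k=1}^{n-1} a_k H_{n-k}$ for $1\le n\le N+1$ and the recursion (\ref{eq:H-recursion}) for $n\ge 1$. A short check at $n=N+1$ shows both formulas agree (it reduces to $H_1=1$). Since each $a_k\ge 0$ and $a_1\ge 1$, one verifies by induction that $H_n\ge 1$ and $H_{n+1}>H_n$, so $H$ is a strictly increasing sequence of positive integers.

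\textbf{Step 2 (Maximal-block identity).} The heart of the argument is the identity
\begin{equation}
\Theta\vert m \,*\,H \;=\; H_{m+1}-1 \quad\text{for every } m\ge 1. \label{eq:max-id}
\end{equation}
For $1\le m\le N$ this follows directly from the initial-value formula for $H_{m+1}$, since $\Theta(k)=a_k$ in that range. For $m\ge N+1$, write $m=N+n$ with $n\ge 1$ and split the sum at the leading block of length $N$, using periodicity of $\Theta$; applying the recursion (\ref{eq:H-recursion}) to $H_{m+1}=H_{n+N+1}$ and strong induction on $m$ converts the claim into an identity already established at the smaller index.

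\textbf{Step 3 (Upper bound on $\cH^\circ$-expansions).} I would next prove by induction on $m$:
\begin{equation}
\text{if } \ep\in\cH^\circ \text{ and } \len(\ep)=m, \text{ then } \ep*H \;\le\; H_{m+1}-1,\label{eq:bound}
\end{equation}
with equality iff $\ep=\Theta\vert m$. By the recursive definition of $\cH^\circ$ there is a smallest $s\le m$ with $\ep\vert s=\Theta\vert s$, $\ep(s+1)<\Theta(s+1)$ (or $s=m$), and the tail $\tau=(\ep(s+2),\dots,\ep(m))$ is in $\cH^\circ$. Writing
\[
\ep*H \;=\; \sum_{k=1}^{s}\Theta(k)H_{m-k+1}+\ep(s+1)H_{m-s}+\tau*H,
\]
induction bounds $\tau*H\le H_{m-s}-1$, and since $\ep(s+1)\le \Theta(s+1)-1$, the total is at most $\Theta\vert s * H_{m,\ldots,m-s+1}+(\Theta(s+1)-1)H_{m-s}+H_{m-s}-1=\Theta\vert(s+1)*H_{m,\ldots,m-s} + (\text{tail max})$; collapsing via (\ref{eq:max-id}) applied to indices $\le m$ gives $\ep*H\le H_{m+1}-1$.

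\textbf{Step 4 (Existence via greedy and uniqueness).} For existence, given $n\ge 1$ let $m$ be the unique index with $H_m\le n<H_{m+1}$; set $c=\lfloor n/H_m\rfloor$ and apply the procedure recursively to $n-c H_m<H_m$. One verifies that the resulting tuple $\ep$ satisfies the defining recursive condition for $\cH$: the leading coefficients track $\Theta$ for as long as they saturate it, and as soon as a coefficient drops below $\Theta$ the remainder is $<H_{m-s}$ and hence (by induction) expandable within $\cH^\circ$. For uniqueness, suppose $n=\ep*H=\mu*H$ with $\ep,\mu\in\cH$. If $\len(\ep)=m\ne\len(\mu)=m'$ with $m<m'$, then (\ref{eq:bound}) gives $\ep*H\le H_{m+1}-1<H_{m'}\le \mu*H$, a contradiction. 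So the lengths agree, and comparing leading coefficients forces them to be equal by the same bound; induction on the length finishes the proof.

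\textbf{Main obstacle.} The delicate point will be Step 3: the recursive definition of $\cH^\circ$ branches on the first position $s+1$ where $\ep$ deviates from $\Theta$, and the bookkeeping between the ``locked'' prefix $\Theta\vert s$ (which contributes close to the maximal value) and the tail (handled by induction) must be done carefully so that the telescoping through (\ref{eq:max-id}) is exact rather than off by one. Once that telescoping is clean, everything else—the greedy existence and the length/leading-digit comparison for uniqueness—follows by standard induction.
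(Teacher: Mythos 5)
The paper does not actually prove this theorem---it imports it from \cite{chang,mw}---so your proposal has to stand on its own. Steps 1--3 are sound and are the right preliminary lemmas: the consistency check at $n=N+1$, the identity $\Theta\vert m * H = H_{m+1}-1$, and the bound $\ep * H\le H_{m+1}-1$ for $\ep\in\cH^\circ$ of length $m$ (with equality only for $\Theta\vert m$) all go through as you describe. The genuine gap is the existence argument in Step 4: the digit rule $c=\flr{n/H_m}$ does not produce elements of $\cH$ for a general principal maximal block. Take $L=(1,5)$, so $\Theta=(1,5,1,5,\dots)$ and $H=(1,2,8,20,\dots)$. Every $\ep\in\cH$ has $\ep(1)\le a_1=1$ (either $s=0$ forces $\ep(1)<\Theta(1)$, or $s\ge 1$ forces $\ep(1)=\Theta(1)$), and the unique legal expansion of $n=7$ is $(1,5)$, since $(1,5)*H=2+5$. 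Your greedy computes $c=\flr{7/2}=3$ and outputs $(3,1)\notin\cH$. The legality cap at each position depends on whether the prefix built so far coincides with $\Theta$, so the correct digit is not the unconstrained floor. The clean repair is to prove, by induction on $m$ using your Steps 2--3, that $\set{\ep*H : \ep\in\cH^\circ,\ \len(\ep)\le m}=\set{0,1,\dots,H_{m+1}-1}$: the subfamilies indexed by the first deviation point $s$ and the digit $c<\Theta(s+1)$ placed there tile the interval $[0,H_{m+1})$. That tiling gives existence and uniqueness of the expansion simultaneously, and avoids your ``compare leading coefficients'' step, which silently assumes that a suffix $(\ep(j+1),\dots,\ep(m))$ of a legal tuple obeys the Step 3 bound; such suffixes need not lie in $\cH^\circ$ (e.g.\ $(5,1,3)$ is a suffix of the legal tuple $(1,5,1,3)$ for $L=(1,5)$, is not in $\cH^\circ$, and violates the bound).

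A second omission: the theorem asserts uniqueness of the \funds\ itself---no increasing sequence other than the recursively defined $H$ admits unique $\cH$-expansions of every positive integer. This is the ``weak converse'' that is the main point of \cite{chang}. You list it among the three assertions to be proved, but Step 4 only establishes uniqueness of the expansion with respect to the $H$ you constructed. A separate induction is needed: $G_1=1$ is forced, and then, granting unique representability, a counting argument forces $G_{m+1}$ to be one more than the largest integer representable by blocks of length at most $m$, hence $G_{m+1}=H_{m+1}$.
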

\noindent
If $L=(1,0)$, then its periodic \zec\ collection is $\cF$ defined in Definition \ref{def:ome}, and
its \funds\ is the \fib\ sequence.
If $L=(9,9)$, then the \funds\ $H$ is given by $H_n=10^{n-1}$,  and $\ep* H$ for $\ep\in\cH$ are base-$10$ expansions. 

\begin{deF}
\rm \label{def:dominant-zero}
Let $L=(a_1,\dots,a_N)$ be the list defined in Definition \ref{def:GZE}. 
Let $\psi=\psi_{\cH}=\psi_L$ be the dominant real zero 
of the polynomial $g=g_{\cH}=g_L(x):=x^N - \sum_{k=1}^{N-1} a_k x^{N-k}- (1+a_N)$, and 
$\theta:=\psi^{-1}$.
Let $\wh H$ be the sequence given by $\wh H_n=\theta^{n-1} $.
\end{deF}
\noindent
By (\ref{eq:H-recursion}),
 the sequence $\wh H$ in Definition \ref{def:dominant-zero} satisfies 
\begin{equation}\label{eq:theta}
 \wh H_n\ =\ a_1 \wh H_{n+1} +\cdots + a_{N-1} \wh H_{n+N-1}+(1+ a_N) \wh H_{n+N}\quad
 \text{for all $\nN$}.
\end{equation} 

The following proposition is proved in \cite[Lemma 43]{chang} and \cite[Lemma 2.1]{martinez}.
\begin{prop}
\label{prop:dominant-zero}
Let $L=(a_1,\dots,a_N)$ be the list defined in Definition \ref{def:GZE}, and let
$g=x^N - \sum_{k=1}^{N-1} a_k x^{N-k}- (1+a_N)$ be the polynomial.
Then, $g$ has one and only one positive real zero $\psi$,   it is a simple zero, and 
there are no other complex zeros $z$ such that $\abs{z}\ge \psi$.
\end{prop}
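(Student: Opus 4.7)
The plan is to establish the three claims, existence of a unique positive real root, its simplicity, and the strict dominance of its modulus, by elementary manipulations of the defining equation $g(x) = x^N - \sum_{k=1}^{N-1} a_k x^{N-k} - (1+a_N)$, using only that $a_1 > 0$ and $1+a_N > 0$.

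First I would divide by $x^N$ for $x>0$ and rewrite $g(x)=0$ as
\[
1 \ =\ \sum_{k=1}^{N-1} a_k x^{-k} + (1+a_N) x^{-N}.
\]
The right-hand side is a strictly decreasing continuous function of $x\in(0,\infty)$, taking values from $+\infty$ (as $x\to 0^+$) to $0$ (as $x\to\infty$). Hence there is one and only one positive solution $\psi$. Equivalently, $g(0)=-(1+a_N)<0$ and $g(x)\to+\infty$, and the strict monotonicity of the right-hand side yields uniqueness.

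Next I would verify that $\psi$ is a simple zero by computing
\[
\psi\, g'(\psi)\ =\ N\psi^N - \sum_{k=1}^{N-1}(N-k)\,a_k\psi^{N-k},
\]
and substituting $\psi^N = \sum_{k=1}^{N-1} a_k\psi^{N-k} + (1+a_N)$, which gives
\[
\psi\, g'(\psi)\ =\ \sum_{k=1}^{N-1} k\,a_k\psi^{N-k} + N(1+a_N)\ >\ 0,
\]
since $1+a_N>0$. Thus $g'(\psi)>0$, so $\psi$ is simple, and $g$ is strictly increasing on $[\psi,\infty)$.

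For the dominance, let $z\in\cplex$ be any zero of $g$. From $z^N=\sum_{k=1}^{N-1} a_k z^{N-k}+(1+a_N)$ and the triangle inequality,
\[
|z|^N\ \le\ \sum_{k=1}^{N-1} a_k |z|^{N-k} + (1+a_N),
\]
i.e., $g(|z|)\le 0$. Since $g$ is increasing on $[\psi,\infty)$ with $g(\psi)=0$, this forces $|z|\le\psi$. To upgrade this to strict inequality for $z\ne\psi$, suppose $|z|=\psi$. Then the triangle inequality is an equality, so every nonzero term among $a_1 z^{N-1},\ldots,a_{N-1}z,\,1+a_N$ has the same argument; since $1+a_N>0$ is a positive real, all of these terms are positive reals. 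The hypothesis $a_1>0$ then forces $z^{N-1}$ to be positive real, and $z^N$ (equal to the right-hand side) is also positive real, whence $z=z^N/z^{N-1}$ is a positive real, i.e., $z=\psi$.

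The potentially delicate step is the last one: producing a contradiction for $|z|=\psi$ with $z\ne\psi$. The argument crucially uses $a_1>0$ (guaranteed by Definition \ref{def:GZE}) together with $1+a_N>0$; without $a_1>0$ the usual Perron--Frobenius primitivity argument would have to replace the quick ``$z^N/z^{N-1}$'' shortcut. With $a_1>0$, the elementary route above suffices and no appeal to matrix theory is needed.
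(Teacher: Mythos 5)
Your proof is correct, and it is genuinely more self-contained than what the paper does: the paper does not prove this proposition at all, but defers it to \cite[Lemma 43]{chang} and \cite[Lemma 2.1]{martinez} (an earlier draft of the argument, visible in the source, gets existence, uniqueness and simplicity from Descartes' Rule of Signs and the strict dominance from Rouch\'e's Theorem applied to the reversed polynomial $\wt g(x)=-1+\sum_k a_k x^k+(1+a_N)x^N$, plus the cited lemma to exclude other zeros on the critical circle). Your route replaces all of this with elementary manipulations: the rewriting $1=\sum_k a_k x^{-k}+(1+a_N)x^{-N}$ gives existence and uniqueness at once; the identity $\psi g'(\psi)=\sum_k k\,a_k\psi^{N-k}+N(1+a_N)>0$ gives simplicity; and the equality case of the triangle inequality, anchored by the two positive terms $a_1z^{N-1}$ and $1+a_N$, forces $z^{N-1}$ and $z^N$ to be positive reals and hence $z=z^N/z^{N-1}=\psi$. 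This last step is exactly where $a_1>0$ from Definition \ref{def:GZE} is essential (for $g=x^N-(1+a_N)$ the conclusion is false), and you correctly flag that. One cosmetic remark: the assertion that $g$ is increasing on $[\psi,\infty)$ does not follow from $g'(\psi)>0$ alone; but what you actually need, namely $g(t)>0$ for $t>\psi$, is already immediate from your first paragraph (for $t>\psi$ the right-hand side of the rewritten equation is $<1$, i.e.\ $g(t)=t^N(1-\sum_k a_k t^{-k}-(1+a_N)t^{-N})>0$), so this is not a gap. What your approach buys is a short, citation-free, purely algebraic proof; what the paper's route buys is nothing additional here beyond consistency with the references it already relies on elsewhere.
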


\begin{theorem}
\label{thm:binet}
Let $\cH$ be a periodic \zec\ collection with a principal maximal block $L=(a_1,\dots,a_N)$, and 
let $H$ be the \funds\ of $\cH$.  Then $H_n = \delta \psi^n + O(\psi^{ rn})$ for $n\in \nat$
where $\delta$ and $r$  are  positive (real) constants, $r<1$, and   $\psi$ is the dominant zero defined in Definition \ref{def:dominant-zero}.
\end{theorem}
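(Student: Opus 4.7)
The plan is to exploit the fact that $H$ satisfies a constant-coefficient linear recurrence and to read off the asymptotics via its companion matrix. By Theorem \ref{thm:funds}, for all $\nN$ we have
\[
H_{n+N} \;=\; a_1 H_{n+N-1} + \cdots + a_{N-1} H_{n+1} + (1+a_N) H_n,
\]
so the characteristic polynomial of the recurrence is exactly the $g$ of Definition \ref{def:dominant-zero}. Letting $\psi,z_2,\dots,z_m$ be the distinct zeros of $g$, standard linear-recurrence theory yields
\[
H_n \;=\; \delta\,\psi^n \;+\; \sum_{i=2}^m p_i(n)\,z_i^n,
\]
where each $p_i$ is a polynomial of degree strictly less than the multiplicity of $z_i$, and $\delta\in\real$ (reality follows since $H_n\in\zz$ and the non-dominant terms pair into complex conjugates).

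For the error bound, set $\psi':=\max_{i\ge 2}|z_i|$; by Proposition \ref{prop:dominant-zero} one has $\psi'<\psi$. Since $g(1)=-\sum_{k=1}^N a_k<0$ (using $a_1>0$), we also have $\psi>1$, so $\log_\psi\psi'<1$. Choosing any $r$ with $\max\{0,\log_\psi\psi'\}<r<1$, the sub-exponential polynomial factor $n^{N-1}$ is eventually swamped by the strict exponential gap, yielding
\[
\Bigl|\sum_{i=2}^m p_i(n)\,z_i^n\Bigr| \;\le\; C\,n^{N-1}(\psi')^n \;=\; O(\psi^{rn})
\]
for some constant $C$.

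The main obstacle is verifying $\delta>0$, and this is where the plan requires the most care. Introduce the $N\times N$ companion matrix $A$ of the recurrence and the vector $v_n:=(H_n,H_{n+1},\dots,H_{n+N-1})^{\mathrm T}$, so $v_{n+1}=Av_n$. The right eigenvector of $A$ for $\psi$ is $w=(1,\psi,\dots,\psi^{N-1})^{\mathrm T}$, with strictly positive entries. Solving $u^{\mathrm T}A=\psi u^{\mathrm T}$ componentwise using the companion structure yields the forward recursion $u_1=(1+a_N)u_N/\psi$ and $u_j=(u_{j-1}+a_{N-j+1}u_N)/\psi$ for $2\le j\le N-1$, with $j=N$ an automatic consistency condition; setting $u_N=1$, induction using $\psi>0$, $a_k\ge 0$, and $1+a_N\ge 1$ gives $u_j>0$ for every $j$. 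Letting $S_n:=u^{\mathrm T}v_n$, we compute $S_{n+1}=u^{\mathrm T}Av_n=\psi S_n$, so $S_n=\psi^n(u^{\mathrm T}v_0)$; substituting the Binet formula then yields
\[
\delta\,(u^{\mathrm T}w)\,\psi^n + O(\psi^{rn}) \;=\; \psi^n(u^{\mathrm T}v_0),
\]
and dividing by $\psi^n$ and sending $n\to\infty$ (using $r<1$) gives $\delta=(u^{\mathrm T}v_0)/(u^{\mathrm T}w)$. Both numerator and denominator are strictly positive since every entry of $u$, $w$, and $v_0=(H_1,\dots,H_N)$ is positive, so $\delta>0$.
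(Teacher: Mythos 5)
Your proof is correct, and the first half (characteristic polynomial, generalized Binet decomposition over the distinct zeros, and the bound $n^{N-1}(\psi')^n=O(\psi^{rn})$ via Proposition \ref{prop:dominant-zero} together with $\psi>1$) coincides with the paper's argument. Where you genuinely diverge is the crux, the positivity of $\delta$. The paper imports the explicit limit formula of \cite[Theorem 5.1]{chang-2023}, writing $\delta$ as $\frac{1}{\psi g'(\psi)}\sum_{k=1}^N \frac{H_k}{(k-1)!}\bigl[\frac{d^{k-1}}{dx^{k-1}}\frac{g(x)}{x-\psi}\bigr]_{x=0}$ and then checking term by term, via the product rule and the signs of $g^{(j)}(0)$, that every summand is non-negative and some are positive. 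You instead run a Perron--Frobenius-style argument: construct a strictly positive left eigenvector $u$ of the companion matrix for the eigenvalue $\psi$ (the forward recursion $u_1=(1+a_N)u_N/\psi$, $u_j=(u_{j-1}+a_{N-j+1}u_N)/\psi$ does give $u_j>0$, and the $j=N$ equation is indeed the relation $g(\psi)=0$), observe that $S_n=u^{\mathrm T}v_n$ satisfies $S_{n+1}=\psi S_n$ exactly, and identify $\delta$ as a ratio of two manifestly positive inner products. This is self-contained and more elementary --- it avoids the external citation entirely and only uses $a_k\ge 0$, $\psi>0$, and positivity of the initial values --- at the cost of not producing the paper's closed-form expression for $\delta$ in terms of derivatives of $g$. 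Two cosmetic points you should fix: your $v_0$ is used inconsistently (you define $v_n=(H_n,\dots,H_{n+N-1})^{\mathrm T}$ but later call $(H_1,\dots,H_N)$ by the name $v_0$; since $H_0$ is not defined you should start the orbit at $v_1$ and write $S_n=\psi^{n-1}S_1$), and the displayed identity then acquires a harmless factor of $\psi$; neither affects the sign of $\delta$.
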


\begin{proof}
Let $g$ be the characteristic polynomial of degree $N$ defined in Definition \ref{def:dominant-zero}, and 
let $\set{\lambda_1,\dots, \lambda_m}$ be the set of $m$ distinct (complex) zeros of $g$ where $m\le N$ and $\lambda_1=\psi$.
Then, by Proposition \ref{prop:dominant-zero}, 
 we have $\abs{\lambda_k}< \psi$ for $2\le k\le m$.
Since $\psi$ is a simple zero, by the generalized Binet's formula \cite{levesque}, there are polynomials $h_k$ for $2\le k\le m$ and a constant $\delta$ such that 
$H_n = \delta \psi^n + \sum_{k=2}^m h_k(n) \lambda_k^n$ for $\nN$.
Thus, there is a positive real number $r<1$ such that $H_n = \delta \psi^n + O(\psi^{rn})$ for $\nN$.

Notice that $\lim_{n\to\infty}  H_n/\psi^n= \delta$, and
let us show that $\delta$ is a positive real number, and in particular, it is non-zero.
By \cite[Theorem 5.1]{chang-2023}, 
\begin{align}
 \delta\ =\ \lim_{n\to\infty} \frac{H_n}{\psi^{n }} &\ =\ 
 \frac{1}{\psi g'(\psi)} \sum_{k=1}^N \frac{ H_k}{ (k-1)!}
 \left[ \frac{ d^{k-1}}{dx^{k-1}} \frac{g(x)}{x-\psi} \right]_{x=0}. \label{eq:alpha}\\ 
 \intertext{By the product rule, we have}
 \left[ \frac{ d^{k-1}}{dx^{k-1}} \frac{g(x)}{x-\psi} \right]_{x=0}
 &\ =\  \left[ \sum_{j=0}^{k-1} \binom{k-1}j g^{(j)}(x)\, (x-\psi)^{-1-j} \prod_{t=1}^{j}(-t)\right]_{x=0}.\notag
\end{align} 
Notice that  if $1\le j\le N-1$, then $g^{(j)}(0)= -a_{N-j} j!\le 0$,
and if $g(0)=-(1+a_N)<0$. 
The inequality $(-\psi)^{-1-j} \prod_{t=1}^{j}(-t)<0 $ for all $0\le j\le k-1$
follows immediately from considering the cases of $j$ being even or odd.
Thus, the summands in (\ref{eq:alpha}) are non-negative, and some   are positive.
This concludes the proof of $\delta$ being a positive real number.
\end{proof}

\noindent
For the remainder of the paper, 
let  $\cH$, $H$, and $\psi$ be as defined in Definition \ref{def:GZE}.

\subsection{Strong \benlaw}

Let us begin with definitions related to leading blocks under $\cH$-expansion.

 \begin{deF}\label{def:GZ-LB}
\rm
Let $n=\ep * H$ for $\nN$ and $\ep\in\cH$.
If $s\le \len(\ep)$, then  
 $ (\ep(1),\dots,\ep(s))\in\cH$ is called
 {\it the leading  block of $n$ with length $s$ under  $\cH$-expansion}.
    Recall that $N=\len(L)$.
If $N\le s \le \len(\ep)$, let 
  $\LB_s^{\cH}(n)$, or simply  $\LB_s(n)$ if the context is clear, denote the leading block of length $s$,
  and if $s\le\len(\ep)$ and $s<N$,
  then let $\LB_s^{\cH}(n)$  or simply  $\LB_s(n)$ denote
  $(\ep(1),\dots,\ep(s),0,\dots,0)\in \nat_0^N$.
 If $s>\len(\ep)$, $\LB_s(n)$ is declared to be undefined. 
  
Recall the product $*$ from Definition \ref{def:products}.
Given an integer $s\ge N$, let
 $\cH_s :=\set{\vecb_1,\vecb_2,\dots, \vecb_\ell}$ be the finite set of 
 the leading  blocks of length $s$ occurring in the $\cH$-expansions of $\nat$ such that
  $1+\vecb_k*H=\vecb_{k+1}*H$ for all $k\le \ell-1$. 
  Recall the truncation notation from Definition \ref{def:vert}.
  If $1\le s<N$, then  
let
 $\cH_s :=\set{\vecb_1,\vecb_2,\dots, \vecb_\ell}$ be the finite set of 
 the leading  blocks of length $N$ occurring in the $\cH$-expansions of $\nat$ such that
 $\vecb_k(j)=0$ for all $1\le k\le \ell$ and $j>s$ and 
  $1+\vecb_k|s*H=\vecb_{k+1}|s*H$ for all $k\le \ell-1$. The leading block $\vecb_\ell$ is called 
  {\it the largest leading block in $\cH_s$}. 
     
 The exclusive block $\vecb_{\ell+1}$ is a \cf\ of length $s$ defined as follows.
 If $s\ge N$, $s\equiv p \moD N$, and $0\le p<N$, then
 \GGG{ 
\vecb_{\ell+1}: =(a_1,\dots,a_{N-1},  a_N,\dots, a_1,\dots,a_{N-1},1+ a_N, c_1,\dots,c_p)
 }  
 where   $c_k=0$ for all $k$.
 If $1\le s<N$, then $
\vecb_{\ell+1} :=( a_1,\dots,a_{N-1},1+ a_N )$.
 If $\vecb$ is a leading block $\vecb_k\in \cH_s$, then we denote $\vecb_{k+1}$ by $\wt \vecb$.
\end{deF}

 If $s<N$, then the leading blocks $\vecb$  in $\cH_s$ has lengths $N$ with $N-s$ last entries of $0$, and 
 this case is treated as above in order to make $\vecb$ and $\wt \vecb$ in the statement and proof of Lemma \ref{lem:LB-inequality-2}
 fit into the case of periodic \zec\ collections; see Lemma \ref{lem:LB-inequality-3}.

By \cite[Definition 2 \&\ Lemma 3]{chang} and Theorem \ref{thm:funds},
the subscript numbering of $\vecb_k\in\cH_s $ for $1\le k\le \ell$ coincides with 
the \lex\ order on the \cf s.
If
 $\vecb$ is the largest leading block in $\cH_s$ where $s\ge N$,
 then
 \AAA{ 
\vecb &=(\dots, a_1,\dots,  a_N,a_1,\dots,a_p)
 \text{  
 if $s\equiv p \moD N$ and $0\le p<N$,}
 }
and $1+ \vecb  * H = \wt \vecb *H=(\dots, a_1,\dots,1+  a_N,0,\dots,0)*H=H_{s+1}$
where the last  $p$ entries of $\wt \vecb$ are zeros.
If $s\equiv 0\moD N$ and $\vecb$ is the largest leading block in $\cH_s$,
then 
$$\wt \vecb= (a_1,\dots,a_{N-1},  a_N,\dots, a_1,\dots,a_{N-1},1+ a_N).$$
 If $s<N$ and $\vecb$ is the largest leading block in $\cH_s$, then $\wt \vecb = ( a_1,\dots,a_{N-1},1+ a_N )$. 
 Recall $\wh H$ from Definition \ref{def:dominant-zero}.  For all cases, if $\vecb$ is the largest leading block in $\cF_s$, then
$ \wt \vecb \cdot \wh H =\psi$.

The proof of Theorem \ref{thm:equidistribution-examples-2} below follows immediately from
Lemma \ref{lem:fH-inv} and  Theorem \ref{thm:equidistr-BL-H}.

\begin{theorem}\label{thm:equidistribution-examples-2}
Let $K$ be a sequence of positive integers such that 
 $K_n= a b^n(1+o(1))$ where $a$ and $ b$ are positive real numbers such that $\log_\psi b$ is irrational.
Then, given  $\vecb\in \cH_s $,
$$\prob{\nN : \LB_{s}(K_n) = \vecb } 
\ =\  \log_\psi\frac{\wt \vecb \cdot \wh H}{  \vecb \cdot \wh H}.$$ 
\end{theorem}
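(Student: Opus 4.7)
The plan is to carry the proof of Corollary \ref{cor:equidistribution-examples} verbatim into the generalized setting, using the two building blocks the author has isolated. Specifically, Lemma \ref{lem:fH-inv} should play the role that Lemma \ref{lem:inverse} played for $\fF\Inv$, namely it supplies an asymptotic of the form
\[
\mathfrak H\Inv(x)\ =\ \log_\psi(x)\ -\ \log_\psi(\delta)\ +\ o(1)\qquad(x\to\infty),
\]
where $\mathfrak H$ is an analytic (or at worst uniform) continuation of the \funds\ $H$, $\delta$ is the Binet constant from Theorem \ref{thm:binet}, and $\psi$ is the dominant zero from Definition \ref{def:dominant-zero}. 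This is precisely what Theorem \ref{thm:binet} enables: the expansion $H_n=\delta\psi^n+O(\psi^{rn})$ with $r<1$ gives $\mathfrak H(x)=\delta\psi^x(1+o(1))$ on $[1,\infty)$, from which the above inversion follows by the same logarithm-and-perturbation calculation used in Lemma \ref{lem:inverse}.

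First I would substitute $K_n=ab^n(1+o(1))$ into Lemma \ref{lem:fH-inv} to obtain
\[
\mathfrak H\Inv(K_n)\ =\ n\log_\psi(b)\ +\ \log_\psi(a/\delta)\ +\ \log_\psi(1+o(1))\ +\ o(1).
\]
Because $\log_\psi(b)$ is irrational, Weyl's Equidistribution Theorem makes $\frc{n\log_\psi b}$ equidistributed on $[0,1]$; the generalized form of Lemma \ref{lem:o(1)} then absorbs the $o(1)$ correction, and translation by the constant $\log_\psi(a/\delta)$ preserves equidistribution. Hence $\frc{\mathfrak H\Inv(K_n)}$ is equidistributed.

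Next I would invoke Theorem \ref{thm:equidistr-BL-H}, the promised generalization of Theorem \ref{thm:equidistr-BL} to periodic \zec\ collections. It asserts that whenever $\frc{\mathfrak H\Inv(K_n)}$ is equidistributed, $K$ satisfies strong \benlaw\ under $\cH$-expansion; evaluated on $\vecb\in \cH_s$ this yields precisely
\[
\prob{\nN:\LB_s(K_n)=\vecb}\ =\ \log_\psi\frac{\wt\vecb\cdot\wh H}{\vecb\cdot\wh H}.
\]

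I expect that the only delicate point — and hence the main obstacle — lies not in this short argument but in the preceding lemma/theorem it imports. Lemma \ref{lem:fH-inv} relies on choosing a continuation $\mathfrak H$ whose error term stays $o(1)$ after taking $\log_\psi$, and Theorem \ref{thm:equidistr-BL-H} requires a generalized version of Lemma \ref{lem:LB-inequality-2} in which the asymptotic $\ep*H=\delta\psi^m(1+o(1))\,\vecb\cdot\wh H$ is derived from the Binet expansion in Theorem \ref{thm:binet} rather than from $F_n\sim\al\phi^n$. The bookkeeping of the exclusive block $\wt\vecb$ introduced in Definition \ref{def:GZ-LB} (particularly the short-length case $s<N$ and the carry case $s\equiv 0\bmod N$ where $\wt\vecb*H=H_{s+1}$) must be checked case by case; once that is in place, together with the equality $\wt\vecb\cdot\wh H=\psi$ at the top of each $\cH_s$ that guarantees the probabilities telescope to $1$, the present theorem is immediate.
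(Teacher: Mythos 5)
Your proposal matches the paper's own proof, which is exactly the two-line reduction you describe: the paper states that Theorem \ref{thm:equidistribution-examples-2} "follows immediately from Lemma \ref{lem:fH-inv} and Theorem \ref{thm:equidistr-BL-H}," with the equidistribution of $\frc{\fH\Inv(K_n)}$ obtained from Weyl's theorem and the $o(1)$-absorption lemma just as in Corollary \ref{cor:equidistribution-examples}. Your identification of where the real work hides (the generalized Lemma \ref{lem:LB-inequality-3} and the exclusive-block bookkeeping) is also consistent with the paper, which delegates those verifications to the reader.
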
 

Motivated from the leading block distributions of the exponential sequences considered in 
Theorem \ref{thm:equidistribution-examples-2},
we declare strong \benlaw\ under $\cH$-expansion as follows.
\begin{deF}\label{def:BL-GZ} \rm 
A  sequence $K$   of positive integers is said to {\it satisfy strong Benford's Law  under  $\cH$-expansion}
if given   $\vecb\in \cH_s $, 
$$\prob{\nN : \LB_s(K_n)=\vecb  } \ =\  \log_{\psi}\frac{\wt \vecb\cdot \wh H}
{\vecb \cdot \wh H}. $$   
\end{deF}

\subsection{Benford  continuation of $H$}
We used a real analytic continuation of the \fib\ sequence for \zec\ expansion,
but as demonstrated in the earlier sections,  the leading block distributions are determined by
  its limit $\fF_\infty$.
Thus, rather than using a real analytic continuation of $H$, 
we may use the limit version directly, which is far more convenient.
By Theorem \ref{thm:binet}, $H_n =\delta \psi^n + O(\psi^{rn})=\delta \psi^n(1+o(1))$ where $\delta$ and $r<1$ are positive real constants, and 
we define the following:

\begin{deF}\rm \label{def:B-continuation}
Let $\fH : [1,\infty) \to \real$ be the function given by 
$$\fH(x)=H_n + (H_{n+1}-H_n)  \frac{\psi^p-1}{\psi-1}$$
where $x=n+p$ and $p=\frc{x}$, and 
it is called {\it a Benford continuation of $H$}.
\end{deF}
\noindent
Recall Definition \ref{def:uniform-continuation}.
Then, $\fH$ is a uniform continuation of $H$, and $\fH_\infty(p) =\frac{\psi^p-1}{\psi-1}$ for all $p\in[0,1]$.
We leave the proof of the following to the reader.
\begin{lemma} \label{lem:fH-inv}
For real numbers $x\in[1,\infty)$, we have
$\fH(x) = \delta \psi^x(1+o(1))$, and $\fH\Inv(x) =\log_\psi(x) - \log_\psi \delta + o(1)$.
\end{lemma}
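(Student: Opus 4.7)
The plan is to derive both asymptotic statements directly from the defining formula in Definition \ref{def:B-continuation} by substituting the Binet-type asymptotic $H_n = \delta\psi^n + O(\psi^{rn})$ supplied by Theorem \ref{thm:binet}, and then inverting.

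First I would write $x = n+p$ with $n = \flr{x}$ and $p = \frc{x}\in[0,1)$. By Theorem \ref{thm:binet},
\[
H_n = \delta\psi^n + O(\psi^{rn}), \qquad H_{n+1}-H_n = \delta\psi^n(\psi-1) + O(\psi^{rn}).
\]
Substituting into $\fH(x) = H_n + (H_{n+1}-H_n)\dfrac{\psi^p-1}{\psi-1}$ and simplifying gives
\[
\fH(x) \ =\ \delta\psi^n\bigl(1 + (\psi^p-1)\bigr) + O(\psi^{rn}) \ =\ \delta\psi^{n+p} + O(\psi^{rn}) \ =\ \delta\psi^x + O(\psi^{rx}),
\]
where the $O(\psi^{rn})$ absorbs the contribution from $O(\psi^{rn})\cdot\frac{\psi^p-1}{\psi-1}$ because the factor $(\psi^p-1)/(\psi-1)$ is bounded on $p\in[0,1]$, and $\psi^{rn}\le \psi^{rx}$ since $n\le x$. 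Factoring $\delta\psi^x$ and using $r<1$ yields $\fH(x) = \delta\psi^x\bigl(1 + O(\psi^{(r-1)x})\bigr) = \delta\psi^x(1+o(1))$.

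For the inverse, set $y = \fH(x)$ and write the estimate just derived as $y = \delta\psi^x(1+E(x))$ with $E(x)=O(\psi^{(r-1)x})$. Taking $\log_\psi$ of both sides,
\[
\log_\psi y \ =\ \log_\psi\delta + x + \log_\psi(1+E(x)),
\]
so $x = \log_\psi y - \log_\psi\delta - \log_\psi(1+E(x))$. Since $\fH$ is strictly increasing with unbounded image on $[1,\infty)$ (inherited from $H$), $x\to\infty$ iff $y\to\infty$, hence $E(x)\to 0$ and $\log_\psi(1+E(x)) = o(1)$, giving $\fH\Inv(y) = \log_\psi y - \log_\psi\delta + o(1)$.

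I do not foresee a substantive obstacle: the argument is essentially a direct analogue of the proof of Lemma \ref{lem:inverse}, with $\delta$ and $\psi$ replacing $\al$ and $\phi$ and with the linear-interpolation formula for $\fH$ taking the place of the closed-form expression for $\fF$. The only minor care point is ensuring the error terms combine correctly under the interpolation, which is immediate from the boundedness of $(\psi^p-1)/(\psi-1)$ on $[0,1]$ and the comparison $\psi^{rn}\le\psi^{rx}$.
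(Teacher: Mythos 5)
Your proof is correct and follows exactly the route the paper intends: the paper leaves this lemma to the reader, and your argument is the direct analogue of the paper's proof of Lemma \ref{lem:inverse}, with the Binet-type estimate of Theorem \ref{thm:binet} substituted into the interpolation formula of Definition \ref{def:B-continuation} and then inverted. The error bookkeeping (boundedness of $(\psi^p-1)/(\psi-1)$ on $[0,1]$ and $\psi^{rn}\le\psi^{rx}$) is handled correctly.
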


Recall $\cH_s$ from Definition \ref{def:GZ-LB} and $\wh H$ from Definition \ref{def:dominant-zero}.
\begin{lemma}\label{lem:LB-inequality-3}
Let $K$ be a sequence of positive real numbers approaching $\infty$.
Let $\vecb\in\cH_s$, and
  let  $A_\vecb:=\set{\nN : \LB_{s}(K_n)=\vecb }$.
Then, there are real numbers $\gamma_n=o(1)$ and $\wt \gamma_n=o(1)$
such that 
  $n\in A_\vecb$ if and only if
\begin{equation}
\log_\psi
  	  \vecb \cdot \wh H +\gamma_n \ \le\ \frc{\fH\Inv(K_n)}\ <\  \log_\psi
  	  \wt \vecb\cdot \wh H + \wt\gamma_n, \label{eq:BL-inequality-H}
\end{equation}  
  	  where $ \wt\gamma_n=0$ when $\vecb$ is the largest leading block of length $s$.
\end{lemma}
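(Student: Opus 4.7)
The plan is to mirror the proof of Lemma \ref{lem:LB-inequality-2} verbatim, swapping the Fibonacci data $(\phi,\al,F,\cF)$ for the generalized \zec\ data $(\psi,\delta,H,\cH)$ and invoking Theorem \ref{thm:binet} and Lemma \ref{lem:fH-inv} in place of Binet's formula for $F$ and Lemma \ref{lem:inverse}. For $n$ large enough that $\vecb' := \LB_s(K_n)$ is defined, the uniqueness half of Theorem \ref{thm:funds} writes $K_n = \mu * H$ for a unique $\mu \in \cH$, and I set $m := \len(\mu)$. I then choose \cf s $\ep, \check\ep$ of length $m$ in $\cH$ whose first $s$ entries agree with $\vecb'$ and $\wt\vecb'$ respectively and whose remaining entries are all zero; the lexicographic structure of $\cH$ noted after Definition \ref{def:GZ-LB} gives $\ep * H \le K_n < \check\ep * H$.

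Next, Theorem \ref{thm:binet} yields $H_k = \delta\psi^k(1+o(1))$, so
\[
\ep * H \ =\ \sum_{k=1}^s \ep(k) H_{m-k+1} \ =\ \delta\psi^m(1+o(1))\sum_{k=1}^s \ep(k)\theta^{k-1} \ =\ \delta\psi^m(1+o(1))\,\vecb' \cdot \wh H,
\]
and Lemma \ref{lem:fH-inv} then delivers $\fH\Inv(\ep * H) = m + \log_\psi(\vecb' \cdot \wh H) + \gamma_n$ with $\gamma_n = o(1)$. The same computation with $\check\ep$ gives $\fH\Inv(\check\ep * H) = m + \log_\psi(\wt\vecb' \cdot \wh H) + \wt\gamma_n$ with $\wt\gamma_n = o(1)$. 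Because $H_m \le K_n < H_{m+1}$ forces $m = \flr{\fH\Inv(K_n)}$, applying the monotone function $\fH\Inv$ to the sandwich $\ep * H \le K_n < \check\ep * H$ and subtracting $m$ yields exactly (\ref{eq:BL-inequality-H}) under the assumption $\vecb' = \vecb$; the converse direction reverses this chain, using that the strict inequality on $\frc{\fH\Inv(K_n)}$ pins down $\vecb'$ uniquely among the blocks of $\cH_s$.

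The $\wt\gamma_n = 0$ clause follows from the identity $\wt\vecb * H = H_{s+1}$ observed after Definition \ref{def:GZ-LB} for the largest leading block: in that situation $\check\ep * H = H_{m+1}$ exactly, and $\fH\Inv(H_{m+1}) = m+1$ on the nose, so the error term vanishes rather than merely being $o(1)$. The main obstacle I foresee is the bookkeeping in the case $s < N$: Definition \ref{def:GZ-LB} pads leading blocks with trailing zeros out to length $N$ and places $\vecb$ in $\nat_0^N$ rather than in $\cH$ proper, so I need to verify that the asymptotic estimate for $\ep * H$ is still valid (it is, since the padding entries contribute nothing to the sum) and that the exclusive block $\wt\vecb$ set up there continues to realize the immediate successor in the sandwich $\ep * H \le K_n < \check\ep * H$. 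Once this case analysis is unpacked, the analytic argument of the previous paragraph transfers without any further change.
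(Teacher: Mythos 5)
Your proposal is correct and follows exactly the route the paper intends: the paper's own ``proof'' of Lemma \ref{lem:LB-inequality-3} is simply the instruction to transport the argument of Lemma \ref{lem:LB-inequality-2}, replacing $(\phi,\al,F,\cF)$ by $(\psi,\delta,H,\cH)$ and Lemma \ref{lem:inverse} by Theorem \ref{thm:binet} and Lemma \ref{lem:fH-inv}, which is precisely what you do. Your added care about the padded blocks in the case $s<N$ matches the remark the authors make immediately after Definition \ref{def:GZ-LB}, so nothing is missing.
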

\noindent
There is no difficulty in applying the arguments of the proof of Lemma \ref{lem:LB-inequality-2} 
to Lemma \ref{lem:LB-inequality-3}, and we leave the proof to the reader.

 Recall Definition \ref{def:BL-GZ}.
\begin{theorem}\label{thm:equidistr-BL-H}
Let $K $ be an increasing  sequence of positive integers such that 
 $\frc{\fH\Inv(K_n)}$  is equidistributed.
Then, $K$ satisfies strong Benford's Law under the $\cH$-expansion.  
\end{theorem}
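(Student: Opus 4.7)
The plan is to imitate the proof of Theorem \ref{thm:equidistr-BL} verbatim, with the Fibonacci objects $F$, $\phi$, $\wh F$, $\fF$, $\cF_s$ replaced by their generalized \zec\ counterparts $H$, $\psi$, $\wh H$, $\fH$, $\cH_s$. The key ingredients have already been put in place: Lemma \ref{lem:LB-inequality-3} provides the analogue of Lemma \ref{lem:LB-inequality-2}, and Lemma \ref{lem:fH-inv} supplies the asymptotic $\fH\Inv(x) = \log_\psi(x) - \log_\psi\delta + o(1)$ that underlies it. So the proof reduces to a standard sandwich argument driven by the equidistribution hypothesis.

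First I would fix $s\ge 1$ and $\vecb\in\cH_s$, and set $A_\vecb := \set{\nN : \LB_s(K_n)=\vecb}$. By Lemma \ref{lem:LB-inequality-3}, $n\in A_\vecb$ is equivalent to
\[
\log_\psi(\vecb\cdot\wh H) + \gamma_n \ \le\ \frc{\fH\Inv(K_n)}\ <\ \log_\psi(\wt\vecb\cdot\wh H) + \wt\gamma_n,
\]
where $\gamma_n,\wt\gamma_n = o(1)$. Given $t\in\nat$, choose $M_t$ so that $|\gamma_n|,|\wt\gamma_n|<1/t$ for all $n\ge M_t$. Sandwiching $A_\vecb$ between the expanded and contracted intervals of length differing by $\log_\psi\tfrac{\wt\vecb\cdot\wh H}{\vecb\cdot\wh H} \pm 2/t$, and invoking the equidistribution of $\frc{\fH\Inv(K_n)}$ on $[0,1]$, one obtains
\[
\limsup_n \prob{k\in\Omega_n : \LB_s(K_k)=\vecb} \ \le\ \log_\psi\frac{\wt\vecb\cdot\wh H}{\vecb\cdot\wh H} + \frac{2}{t},
\]
and the symmetric lower bound for $\liminf$. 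Letting $t\to\infty$ gives the desired identity, matching Definition \ref{def:BL-GZ}.

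The only genuine verification is that the interval $[\log_\psi(\vecb\cdot\wh H),\log_\psi(\wt\vecb\cdot\wh H)]$ lies inside $[0,1]$, so that the equidistribution of the fractional part applies without wrap-around at the endpoint. This follows from the bounds $1 \le \vecb\cdot\wh H < \wt\vecb\cdot\wh H \le \psi$: the lower bound uses $\vecb(1)\ge 1$ and $\wh H_1=1$, while the upper bound uses the observation recorded after Definition \ref{def:GZ-LB} that $\wt\vecb\cdot\wh H = \psi$ when $\vecb$ is the largest leading block (and $\wt\vecb\cdot\wh H < \psi$ otherwise, by monotonicity of $\ep\mapsto\ep\cdot\wh H$ with respect to the \lex\ order, as in Proposition \ref{prop:ZT-OI-order}). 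The case $1\le s<N$ is absorbed into the same argument because $\cH_s$ in Definition \ref{def:GZ-LB} is already defined uniformly as a set of $N$-tuples with trailing zeros. I do not expect any substantive obstacle: the proof is a direct port of the Fibonacci case, and all the preparatory lemmas have been stated in the required generality.
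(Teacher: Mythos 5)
Your proposal is correct and matches the paper's intent exactly: the paper itself gives no separate argument for Theorem \ref{thm:equidistr-BL-H}, stating only that the proof of Theorem \ref{thm:equidistr-BL} applies verbatim with Lemma \ref{lem:LB-inequality-3} in place of Lemma \ref{lem:LB-inequality-2}, which is precisely the sandwich argument you carry out. Your additional check that $[\log_\psi(\vecb\cdot\wh H),\log_\psi(\wt\vecb\cdot\wh H)]\subset[0,1]$ is a sensible verification the paper leaves implicit.
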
 
 \noindent
There is no difficulty in applying the arguments of the proof of Theorem \ref{thm:equidistr-BL}
to Theorem \ref{thm:equidistr-BL-H}, and we leave the proof to the reader.

\subsection{Absolute \benlaw}\label{sec:absolute}

Introduced in \cite{chang} is  a full generalization of \zec\ expressions, which is based on the very principle of how \zec\ expressions are constructed in terms of \lex\ order.  In this most general sense, the collection $\cH$ in Definition \ref{def:GZE} is called
  a periodic \zec\ collection of \cf s.
We believe that a property concerning all periodic \zec\ collections may be noteworthy,
and as in  the notion of normal numbers, 
  we introduce the following definition. 
\begin{deF}\label{def:BL-absolute}
\rm
A sequence $K$ of positive integers is said to 
{\it satisfy absolute \benlaw}
if  $K$ satisfies strong $\cH$-\benlaw\ for each periodic \zec\ collection  $\cH$.
\end{deF}
Recall the Lucas sequence $K=(2,1,3,4,\dots)$ from Example 
\ref{exm:lucas}.
It satisfies strong \benlaw\ under all base-$b$ expansions, 
but it does not satisfy strong \benlaw\ under \zec\ expansion.
Thus, the Lucas sequence does not satisfy absolute \benlaw.

\begin{theorem}\label{thm:gamma-n}
Let $\gamma$ be a positive real number such that 
 $\gamma$ is not equal to $\psi^r$ for any $r\in \ratn$ and 
   any dominant real zero $\psi$ of $g_\cH$ where $\cH$ is as defined in Definition \ref{def:dominant-zero}.
   Let $K$ be the sequence given by $K_n= \flr{\gamma^{n } } $.
Then, $K$ satisfies absolute \benlaw.
\end{theorem}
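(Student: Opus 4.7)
The plan is to reduce absolute \benlaw\ to a Weyl equidistribution statement, applied one periodic \zec\ collection at a time. Fix an arbitrary periodic \zec\ collection $\cH$; let $\psi$ be its dominant zero, $\delta$ the positive constant from Theorem \ref{thm:binet}, and $\fH$ the Benford continuation of the corresponding \funds. By Theorem \ref{thm:equidistr-BL-H}, it is enough to verify that $\frc{\fH\Inv(K_n)}$ is equidistributed on $[0,1]$, because this already guarantees strong $\cH$-\benlaw\ for $K$, and this must then be shown for every such $\cH$.

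Next I would compute $\fH\Inv(K_n)$ asymptotically. By Lemma \ref{lem:fH-inv},
$$\fH\Inv(K_n)\ =\ \log_\psi(K_n)\,-\,\log_\psi\delta\,+\,o(1).$$
Writing $K_n=\gamma^n-(\gamma^n-K_n)$ with $0\le \gamma^n-K_n<1$, and using $\gamma>1$ (which must be the case, since $\gamma=\psi^0=1$ is excluded by hypothesis, and $\gamma<1$ would make $K_n$ eventually zero, contradicting the setup), I obtain
$$\log_\psi(K_n)\ =\ n\log_\psi\gamma\,+\,\log_\psi\bigl(1-O(\gamma^{-n})\bigr)\ =\ n\log_\psi\gamma\,+\,O(\gamma^{-n}).$$
Combining,
$$\fH\Inv(K_n)\ =\ n\log_\psi\gamma\,-\,\log_\psi\delta\,+\,o(1).$$

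Finally, the hypothesis specialized to the dominant zero $\psi$ of $g_\cH$ states that $\gamma\neq\psi^r$ for all $r\in\ratn$, which is precisely the irrationality of $\log_\psi\gamma$. Weyl's Equidistribution Theorem then gives that $\frc{n\log_\psi\gamma}$ is equidistributed on $[0,1]$; Lemma \ref{lem:o(1)} absorbs the $o(1)$ remainder without disturbing equidistribution, and shifting by the constant $-\log_\psi\delta$ is harmless. Hence $\frc{\fH\Inv(K_n)}$ is equidistributed, Theorem \ref{thm:equidistr-BL-H} yields strong \benlaw\ under $\cH$-expansion, and since $\cH$ was arbitrary, $K$ satisfies absolute \benlaw. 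The proof is essentially a direct assembly of previously established tools; the one conceptual point deserving care is that the hypothesis on $\gamma$ is the uniform irrationality condition needed simultaneously across \emph{every} periodic \zec\ collection, and no single choice of $\psi$ suffices.
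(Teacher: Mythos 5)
Your proposal is correct and follows essentially the same route as the paper's proof: reduce to equidistribution of $\frc{\fH\Inv(K_n)}$ via Theorem \ref{thm:equidistr-BL-H}, expand $\fH\Inv(K_n)=n\log_\psi\gamma-\log_\psi\delta+o(1)$ using Lemma \ref{lem:fH-inv}, and invoke Weyl's theorem with the hypothesis $\gamma\ne\psi^r$ supplying the irrationality of $\log_\psi\gamma$. Your extra remark that $\gamma>1$ is forced (since $\gamma=1=\psi^0$ is excluded and $\gamma<1$ would make $K_n=0$) is a small point the paper leaves implicit, but otherwise the two arguments coincide.
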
 

\begin{proof}
Let $H$ and $\psi$ be as defined in Definitions \ref{def:FS} and \ref{def:dominant-zero}, and let $\fH$ be the Benford continuation defined in Definition \ref{def:B-continuation}.  Note that $\psi$ is algebraic.
Notice that  $\flr{\gamma^{n } }=\gamma^{n+o(1)} $, and  $\log_\psi(\gamma)$ is irrational.
Thus, by Lemma \ref{lem:fH-inv},
\GGG{
\fH\Inv(K_n)\ =\ (n + o(1)) \log_\psi(\gamma) -\log_\psi(\delta) + o(1)
\ =\ n \log_\psi(\gamma) -\log_\psi(\delta) + o(1).\\
\intertext{By Weyl's Equidistribution Theorem,}
\ \implies\  
\prob{\nN : \frc{\fH\Inv(K_n)}\le \beta} 
\ =\ \prob{\nN : \frc{n \log_\psi(\gamma) }\le \beta}\ =\ \beta.
}
By Theorem \ref{thm:equidistr-BL-H}, $K$ satisfies \benlaw\ under $\cH$-expansion.
\end{proof}

\begin{cor}\label{cor:gamma-n}
Let $\gamma>1$ be a real number that is not an algebraic integer.
Then, the sequence $K$ given by $K_n= \flr{\gamma^{n } } $ satisfies
absolute \benlaw.
\end{cor}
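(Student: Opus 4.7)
The plan is to deduce this corollary from Theorem \ref{thm:gamma-n}, so the task reduces to verifying its hypothesis: no positive real $\gamma$ that fails to be an algebraic integer can be written as $\psi^r$ for $r\in\ratn$ and $\psi$ a dominant real zero of some polynomial $g_\cH$.

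First I would record that every such $\psi$ is an algebraic integer. This is immediate from Definition \ref{def:dominant-zero}: $\psi$ is a root of the \emph{monic} polynomial $g_\cH(x)=x^N-\sum_{k=1}^{N-1}a_k x^{N-k}-(1+a_N)\in\zz[x]$. I would also note that $\psi>1$: evaluating gives $g_\cH(1)=-\sum_{k=1}^N a_k\le -a_1<0$ while $g_\cH(x)\to\infty$ as $x\to\infty$, so the (unique) positive real root must exceed $1$.

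Next, suppose toward a contradiction that $\gamma=\psi^r$ for some $r=p/q\in\ratn$ with $q>0$. Since $\gamma>1$ and $\psi>1$, we have $r>0$, so we may take $p,q$ both positive. Then $\gamma^q=\psi^p$ is an algebraic integer (the ring of algebraic integers is closed under multiplication), which means $\gamma$ is a root of the monic polynomial $x^q-\psi^p$ whose coefficients lie in the ring of algebraic integers. Because the ring of algebraic integers is integrally closed in $\Qbar$, this forces $\gamma$ itself to be an algebraic integer, contradicting the hypothesis on $\gamma$.

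Hence no such representation $\gamma=\psi^r$ exists for any periodic \zec\ collection $\cH$, and Theorem \ref{thm:gamma-n} directly yields that $K_n=\flr{\gamma^n}$ satisfies absolute \benlaw. The only genuine content is the integral-closure step in the penultimate paragraph; the rest is bookkeeping about $g_\cH$ and its dominant real root, both of which are already spelled out in Definition \ref{def:dominant-zero} and Proposition \ref{prop:dominant-zero}.
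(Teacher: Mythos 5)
Your proposal is correct and follows essentially the same route as the paper: reduce to Theorem \ref{thm:gamma-n} by observing that $\psi$ is an algebraic integer and hence so is any relevant $\psi^{r}$, so a non-algebraic-integer $\gamma$ can never equal one. You are in fact slightly more careful than the paper's two-line proof, which asserts $\psi^{r}$ is an algebraic integer for \emph{all} $r\in\ratn$ (false for negative $r$ in general), whereas you correctly dispose of that case by noting $\gamma>1$ and $\psi>1$ force $r>0$.
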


\begin{proof}
The dominant real zero $\psi$ defined in Definition \ref{def:dominant-zero} is 
an algebraic integer, and so is $\psi^r$ for all $r\in\ratn$.
Thus, if $\gamma\in \real$ is not an algebraic integer, then by Theorem \ref{thm:gamma-n}, 
$K$ satisfies absolute \benlaw.
\end{proof}

\begin{example} \label{exm:ABL}
\rm
Let
$K$ be the sequence given by $K_n=  \flr{ \frac{\phi}{\sqrt5}(\tfrac{89}{55})^{n } } $, which is considered in the introduction.
Since $\tfrac{89}{55}$ is not an algebraic integer, by Corollary \ref{cor:gamma-n}, 
  the sequence $K$ satisfies absolute \benlaw. 

\end{example}
 
 \subsection{Other Continuations}
Recall Definition \ref{def:uniform-continuation}, and that $H$ is the fundamental sequence of $\cH$ defined in
Definition \ref{def:FS}.
 As in Section \ref{sec:Other-continuations}, 
 we relate other continuations of $H$ to the distributions of leading blocks under $\cH$-expansion.

Recall the Benford continuation $\fH$ from Definition \ref{def:B-continuation},
uniform continuations $h$ and $h_\infty$ from Definition \ref{def:uniform-continuation}, and
the definition of $\wt\vecb$ from Definition \ref{def:GZ-LB}. 
\begin{theorem}\label{thm:continuation-H}
Let $h : [1,\infty) \to \real$ be  a  uniform continuation of $H$. 
    Then,  there is a  sequence
    $K$ of positive integers approaching $\infty$, e.g.,
 $K_n=  \flr{\fH(n + \fH_n\Inv\circ h_n \big( \frc{n\pi} \big)}  $, such that  
    $\frc{ h\Inv(K_n) }$ is equidistributed. 

Let $K$ be a sequence of of positive integers approaching $\infty$ such that  
    $\frc{h\Inv(K_n) }$ is equidistributed.
Let $\vecb\in \cH_s$.
Then, 
\begin{align}
\prob{\nN : \LB_{s}( {K_n}) =\vecb }  
 &\ =\  
h_\infty\Inv
\circ \fH_\infty(\log_\psi\wt  \vecb\cdot \wh H )  - 
h_\infty\Inv\circ   \fH_\infty(\log_\psi \vecb\cdot \wh H ) \notag
 \\
 &\ =\ 
h_\infty\Inv
\left(\frac{\wt  \vecb\cdot \wh H - 1}{\psi - 1} \right) - 
h_\infty\Inv\left( \frac{ \vecb\cdot \wh H - 1}{\psi - 1}\right).
\notag 
\end{align}

\end{theorem}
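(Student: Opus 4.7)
The plan is to transplant the proof of Theorem \ref{thm:continuation} essentially verbatim, replacing $F$ by $H$, $\fF$ by $\fH$, $\phi$ by $\psi$, $\cF_s$ by $\cH_s$, and invoking Lemma \ref{lem:LB-inequality-3} in place of Lemma \ref{lem:LB-inequality-2}. The two auxiliary facts that made the argument go through for $F$ were Lemma \ref{lem:uniform-continuation} (uniform continuations respect the floor function up to $o(1)$) and Lemma \ref{lem:o(1)} (equidistribution is preserved under $o(1)$ perturbations). Both proofs only use that $f$ is a uniform continuation of some increasing integer sequence, so they carry over word-for-word to give: if $h$ is a uniform continuation of $H$ and $K_n\to\infty$, then $\frc{h\Inv(\flr{K_n})} = \frc{h\Inv(K_n)} + o(1)$.

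To prove the first assertion, I would imitate the derivation of (\ref{eq:f-inverse}). Fix $x\in[H_n,H_{n+1})$ and write $x = \fH(n+p) = h(n+p')$ with $p,p'\in[0,1]$. Since $\fH_n(p)(H_{n+1}-H_n) = x - H_n = h_n(p')(H_{n+1}-H_n)$, we get $p' = h_n\Inv(\fH_n(p))$ and hence $h\Inv(x) = n + h_n\Inv(\fH_n(p))$. Setting $\overline K_n := \fH(n + \fH_n\Inv \circ h_n(\frc{n\pi}))$ and $K_n := \flr{\overline K_n}$, this identity gives $h\Inv(\overline K_n) = n + \frc{n\pi}$, so $\frc{h\Inv(\overline K_n)}$ is equidistributed by Weyl; the analogs of Lemmas \ref{lem:uniform-continuation} and \ref{lem:o(1)} then transfer this to $\frc{h\Inv(K_n)}$.

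For the second assertion, let $\vecb\in\cH_s$ and $n\in A_\vecb := \set{n : \LB_s(K_n) = \vecb}$. Choose $m$ with $H_m \le K_n < H_{m+1}$ and write $K_n = \fH(m+p) = h(m+p')$. By Lemma \ref{lem:LB-inequality-3},
\begin{align*}
\log_\psi(\vecb\cdot\wh H) + o(1) \ \le\ & \frc{\fH\Inv(K_n)} \ <\ \log_\psi(\wt\vecb\cdot\wh H) + o(1) \\
\iff\ \log_\psi(\vecb\cdot\wh H) + o(1) \ \le\ & \fH_m\Inv(h_m(p')) \ <\ \log_\psi(\wt\vecb\cdot\wh H) + o(1) \\
\iff\ h_n\Inv\circ\fH_n(\log_\psi(\vecb\cdot\wh H)) + o(1) \ \le\ & p'\ <\ h_n\Inv\circ\fH_n(\log_\psi(\wt\vecb\cdot\wh H)) + o(1).
\end{align*}
Uniform convergence $h_n\to h_\infty$ and $\fH_n\to\fH_\infty$ lets me replace the subscripts $n$ by $\infty$ with an $o(1)$ error, after which the assumed equidistribution of $\frc{h\Inv(K_n)} = p' + o(1)$ yields the claimed probability $h_\infty\Inv\circ\fH_\infty(\log_\psi\wt\vecb\cdot\wh H) - h_\infty\Inv\circ\fH_\infty(\log_\psi\vecb\cdot\wh H)$. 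Finally, $\fH_\infty(p) = (\psi^p - 1)/(\psi - 1)$ (Definition \ref{def:B-continuation}) gives the second displayed form.

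The main obstacle, I expect, is purely bookkeeping rather than conceptual: one must be careful at the boundary cases where $s<N$ or $\vecb$ is the largest block in $\cH_s$, since Lemma \ref{lem:LB-inequality-3} was phrased so that the $\wt\gamma_n$ vanishes in the latter case and the definitions in Definition \ref{def:GZ-LB} pad with zeros in the former. One also needs the analog of the telescoping argument following Theorem \ref{thm:equidistribution-examples} to verify that the probabilities sum to $1$ and, in particular, that $\wt\vecb\cdot\wh H = \psi$ for the largest block of length $s$; this is where (\ref{eq:theta}) and the definition of the exclusive block $\vecb_{\ell+1}$ do the work. Once those endpoint identities are in hand, the chain of equivalences above runs without further difficulty.
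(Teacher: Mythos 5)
Your proposal is correct and coincides with the paper's own treatment: the authors explicitly state that the arguments of the proof of Theorem \ref{thm:continuation} apply to Theorem \ref{thm:continuation-H} without difficulty and leave the details to the reader, and the substitution you describe ($F\mapsto H$, $\fF\mapsto\fH$, $\phi\mapsto\psi$, $\cF_s\mapsto\cH_s$, Lemma \ref{lem:LB-inequality-2} replaced by Lemma \ref{lem:LB-inequality-3}) is exactly that transfer. Your closing remarks on the boundary cases ($s<N$, the largest block, and the identity $\wt\vecb\cdot\wh H=\psi$) correctly identify the only points requiring care.
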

\noindent
There is no difficulty in applying the arguments of the proof of Theorem \ref{thm:continuation}
to Theorem \ref{thm:continuation-H}, and we leave the proof to the reader.

Recall that $\OI=(0,1)$.
As in Definition \ref{def:infinite-tuples}, 
we introduce expressions for $\OI$ that are associated with $\cH$.
Recall also  the infinite tuple $\Theta$, $\theta$, and  $\wh H$,  from Definitions \ref{def:GZE}  and \ref{def:dominant-zero}.
\begin{deF}\label{def:infinite-tuples-H}\rm

An infinite tuple $ \mu\in\prod_{k=1}^\infty \nat_0$ is 
called an {\it $\cH$-expression for $\OI $} if
there is a smallest $i\in\nat$ such that $\mu(i)>0$,
$(\mu(i),\dots,\mu(k))\in \cH$ for all $k\ge i$, and for all $j\in\nat_0$,
the sequence $\seQ{\mu(j+n)}$ is not equal to the sequence  $ \seQ{\Theta(n)}$.
Let $\cHst$ be the set of $\cH$-expressions for $\OI$.

Given $s\in\nat$ and  $\set{\mu,\tau}\subset\cHst$, we declare $\mu\vert s<\tau\vert s$ if $\mu\vert s \cdot \wh H
< \tau\vert s\cdot\wh H$, which coincides with the \lex\ order on $\nat_0^s$.
We define $\mu\cdot \wh H:=\sum_{k=1}^\infty \mu(k)\theta^{k-1}$, which is a convergent series. 
 
\end{deF}

Theorem \ref{thm:ZT-OI-H} and Proposition \ref{prop:ZT-OI-order-H} below are proved in \cite{chang}.
\begin{theorem}[\zec\ Theorem for $\OI$] \label{thm:ZT-OI-H}
Given a real number $\beta\in \OI$, there is a unique $\mu \in\cHst$ such that 
$\beta=\sum_{k=1}^\infty \mu(k) \theta^k=( \mu\cdot \wh H)\theta$.
\end{theorem}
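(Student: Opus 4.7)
The plan is to prove existence via a greedy algorithm and uniqueness via a lexicographic argument, both pinned down by the identity that the ``$\Theta$-expression'' sums to $1$.

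First I would establish the identity
\[
\sum_{k=1}^{\infty}\Theta(k)\theta^k \;=\; 1,
\]
which follows from (\ref{eq:theta}) at $n=1$ upon multiplication by $\theta^{N}$: this yields $1 = \sum_{k=1}^N a_k\theta^k + \theta^N$, i.e.\ $\sum_{k=1}^N a_k\theta^k = 1-\theta^N$.  Summing over the period-$N$ shifts of $\Theta$ gives the geometric series
\[
\sum_{k=1}^\infty \Theta(k)\theta^k \;=\; \left(\sum_{k=1}^N a_k\theta^k\right)\sum_{m=0}^{\infty}\theta^{mN} \;=\; (1-\theta^N)\cdot\frac{1}{1-\theta^N} \;=\; 1.
\]
This is the exact analogue of $0.\overline{9}=1$ in base~$10$, and it explains why infinite tails equal to a shift of $\Theta$ must be excluded from $\cHst$.

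For existence, given $\beta\in\OI$, I would run a greedy algorithm.  Let $i$ be the smallest positive integer with $\theta^i\le\beta$, set $\mu(1)=\cdots=\mu(i-1)=0$, and recursively for $k\ge i$ let $\mu(k)$ be the largest nonnegative integer such that (a) the partial sum $\sigma_k:=\sum_{j=1}^k\mu(j)\theta^j$ satisfies $\sigma_k\le\beta$, and (b) the partial tuple $(\mu(i),\ldots,\mu(k))$ occurs as a prefix of some element of $\cH$.  The legality condition (b) always admits a nonnegative greedy choice, by the recursive $\cH^\circ$-structure: after any legal prefix one may either continue matching $\Theta$ (up to the next strict drop) or drop.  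Convergence $\sigma_k\to\beta$ follows from $\beta-\sigma_k<\theta^{k}$, using again the maximal-tail identity above applied at position~$k$.  Consequently $\mu\cdot\wh H\cdot\theta=\beta$, and the strict inequality $\beta<1$ combined with the same tail identity prevents any suffix of $\mu$ from coinciding with a shift of~$\Theta$, so $\mu\in\cHst$.

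For uniqueness, suppose $\mu,\tau\in\cHst$ both represent $\beta$ and differ first at an index $k$, say $\mu(k)<\tau(k)$.  Then
\[
(\tau(k)-\mu(k))\theta^k \;=\; \sum_{j>k}\mu(j)\theta^j - \sum_{j>k}\tau(j)\theta^j,
\]
so in particular $\sum_{j>k}\mu(j)\theta^j\ge\theta^k$.  By the maximal-tail identity, the supremum of $\sum_{j>k}\mu(j)\theta^j$ over legal continuations of $(\mu(1),\ldots,\mu(k))$ is exactly $\theta^k$, attained only by the continuation in which $\mu(k+1),\mu(k+2),\ldots$ equals the appropriate shift of $\Theta$.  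Combined with $\mu\in\cHst$ (which forbids such a $\Theta$-tail) this is a contradiction, so $\mu=\tau$.

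The main obstacle will be the fine-grained verification that the greedy choice is compatible with the nested lex-recursive definition of $\cH^\circ$ at every step, together with the precise computation of the supremum of tail sums over legal $\cH$-continuations.  Both require careful bookkeeping with respect to $\Theta$ and an appeal to Proposition \ref{prop:ZT-OI-order-H} (the equivalence of lex order and numerical order), and they are exactly what make the $\Theta$-identity the decisive tool in both halves of the argument.
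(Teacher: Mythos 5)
The paper never proves this theorem: it is imported wholesale from \cite{chang} (``Theorem \ref{thm:ZT-OI-H} and Proposition \ref{prop:ZT-OI-order-H} below are proved in \cite{chang}''), so there is no internal argument to compare against. Your strategy --- greedy existence plus lexicographic uniqueness, both anchored by the identity $\sum_{k\ge 1}\Theta(k)\theta^{k}=1$, which you correctly extract from (\ref{eq:theta}) at $n=1$ --- is the standard and, in outline, correct route.

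Two of your justifications, however, are false as literally stated and need the drop structure of $\cH^{\circ}$ to repair. First, in the uniqueness step, the claim that the supremum of $\sum_{j>k}\mu(j)\theta^{j}$ over \emph{all} legal continuations of a legal prefix $(\mu(1),\dots,\mu(k))$ equals $\theta^{k}$ fails for prefixes that end mid-block: take $L=(1,2)$, so $1=\theta+3\theta^{2}$, and a prefix whose current block has matched only $\Theta(1)=1$; the continuation $2,1,2,1,\dots$ contributes $\theta^{k}(2\theta+\theta^{2})/(1-\theta^{2})>\theta^{k}$ since $2\theta+\theta^{2}>1-\theta^{2}=\theta+2\theta^{2}$. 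The argument survives only because in your setting $\mu(k)<\tau(k)\le$ (the maximal legal value at position $k$), so $\mu$ has a strict drop at $k$, position $k+1$ starts a fresh block, every legal tail is then lexicographically $\le\Theta$, and Proposition \ref{prop:ZT-OI-order-H} converts this to the numerical bound $\theta^{k}$ with equality only for the forbidden $\Theta$-tail. You should say this explicitly; without the drop observation the step is wrong. Second, in the existence step, ``$\beta<1$ prevents any suffix of $\mu$ from coinciding with a shift of $\Theta$'' only rules out a $\Theta$-tail beginning at position $1$. A $\Theta$-tail beginning at $p+1>1$ must be excluded by greediness at the preceding drop position $p$: such a tail forces $\sigma_{p}+\theta^{p}\le\beta$, while the greedy choice at $p$ (where $\mu(p)+1$ was still legal, precisely because $\mu(p)$ was below the cap) forces $\beta<\sigma_{p}+\theta^{p}$. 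The same dichotomy is what actually gives convergence: the bound $\beta-\sigma_{k}<\theta^{k}$ holds only at steps where the $\beta$-constraint (rather than legality) caps $\mu(k)$ --- again for $L=(1,2)$ one has $\sum_{j>1}\Theta(j)\theta^{j}=1-\theta>\theta$ --- so you need that such steps occur infinitely often, which is exactly the no-$\Theta$-tail argument. With these two repairs the proof goes through.
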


\begin{prop}\label{prop:ZT-OI-order-H}
Let $\set{\mu,\tau}\subset \cHst$.
Then,
$\mu\cdot \wh H < \tau\cdot \wh H$ if and only if $\mu\vert s < \tau\vert s$ for some $s\in\nat$.
\end{prop}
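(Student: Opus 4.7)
The plan is to reduce the biconditional to the direction $(\Leftarrow)$ and prove $(\Leftarrow)$ via a strict tail bound on $\cHst$-expressions. For the reduction, if $\mu\cdot\wh H<\tau\cdot\wh H$, then by the uniqueness in Theorem \ref{thm:ZT-OI-H} we have $\mu\ne\tau$, so letting $k$ be the first index where they differ, an application of $(\Leftarrow)$ with the roles of $\mu,\tau$ swapped rules out the possibility $\mu(k)>\tau(k)$ (which would produce $\tau\cdot\wh H<\mu\cdot\wh H$), leaving $\mu(k)<\tau(k)$, i.e., $\mu\vert k<\tau\vert k$.

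For the direction $(\Leftarrow)$, assume $\mu\vert s<\tau\vert s$ and let $k\le s$ be the first position where $\mu(k)\ne\tau(k)$, so that $\tau(k)-\mu(k)\ge 1$. Splitting the difference at position $k$,
$$\tau\cdot\wh H-\mu\cdot\wh H\;\ge\;\theta^{k-1}\;-\;\sum_{j>k}\mu(j)\theta^{j-1},$$
so it suffices to prove the strict tail bound $\sum_{j>k}\mu(j)\theta^{j-1}<\theta^{k-1}$ for every $\mu\in\cHst$ and every $k\ge 0$. The core identity is
$$\sum_{j=1}^{\infty}\Theta(j)\theta^{j-1}\;=\;\psi,$$
obtained by dividing the recursion (\ref{eq:theta}) through by $\theta^{n-1}$ to get $1-\theta^N=\theta(a_1+a_2\theta+\cdots+a_N\theta^{N-1})$ and summing the geometric series over blocks of length $N$. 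Thus the ``maximum conceivable'' value of any $\cH$-tail starting at position $k+1$ is $\theta^k\cdot\psi=\theta^{k-1}$, attained only when the entries past position $k$ match $\Theta$ exactly.

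To upgrade this observation to the required strict inequality, the argument will show that the shifted sequence $\mu^{(k)}(m):=\mu(k+m)$ lies in $\cHst$ (unless it is identically zero, in which case the bound is trivial). The no-$\Theta$-shift condition in Definition \ref{def:infinite-tuples-H} transfers to $\mu^{(k)}$ immediately; and the $\cH$-membership of every prefix of $\mu^{(k)}$ starting at its first non-zero entry will follow from the recursive ``$\Theta\vert s$-prefix, drop, recurse'' decomposition of $\cH^\circ$ in Definition \ref{def:GZE}, by induction on where the cut falls. Theorem \ref{thm:ZT-OI-H} then yields $\mu^{(k)}\cdot\wh H<\psi$ strictly, since $\cHst$ is in bijection with $\OI$ via $\nu\mapsto\theta(\nu\cdot\wh H)$ and $\psi$ would correspond to $1\notin\OI$; scaling by $\theta^k$ produces the tail bound. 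I expect the main obstacle to be making the suffix-closure of $\cH^\circ$ rigorous: the recursive definition reads left-to-right, while the property needed concerns truncating from the left at an arbitrary index that may fall inside a ``$\Theta\vert s$-prefix'' chunk rather than at a chunk boundary, so the inductive step has to handle entering a prefix chunk mid-way and re-synchronizing with the next drop.
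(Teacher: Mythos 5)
The paper does not actually supply its own proof of this proposition (it is imported from \cite{chang}), so I can only judge your argument on its own terms. Your reduction of the forward direction to the backward one via trichotomy at the first differing index is fine, and the identity $\sum_{j\ge1}\Theta(j)\theta^{j-1}=\psi$ is correct and is the right extremal computation. The gap is in your central lemma: the strict tail bound $\sum_{j>k}\mu(j)\theta^{j-1}<\theta^{k-1}$ for \emph{every} $\mu\in\cHst$ and \emph{every} $k$, and the suffix-closure of $\cH^\circ$ on which you propose to base it, are both false for general periodic collections. Take $L=(1,2)$, so $\Theta=(1,2,1,2,\dots)$, $g(x)=x^2-x-3$, $\psi=(1+\sqrt{13})/2\approx 2.303$, $\theta\approx 0.434$. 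The tuple $\mu=(1,2,1,1,0,0,\dots)$ lies in $\cHst$ (every prefix decomposes with $s=3$, the drop $\mu(4)=1<\Theta(4)=2$, then zeros), but the shift $\mu^{(1)}=(2,1,1,0,\dots)$ is not in $\cHst$, because $(2)\notin\cH$: one can have neither $2<\Theta(1)=1$ nor $(2)=\Theta\vert 1$. Numerically, $\sum_{j>1}\mu(j)\theta^{j-1}=2\theta+\theta^2+\theta^3\approx 1.139>1=\theta^{0}$. So the obstacle you flag at the end --- cutting inside a $\Theta\vert s$-prefix chunk --- is not a bookkeeping difficulty to be overcome by a cleverer induction; the claim is simply wrong at such cuts. (It does hold for the Fibonacci case $L=(1,0)$ and for base-$b$, which is probably why it looks plausible.)

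The repair is that you only ever need the tail bound at the \emph{first differing index} $k$ of $\mu$ and $\tau$, and there it does hold. Since $\mu$ and $\tau$ agree on positions $<k$, their canonical block decompositions agree up to that point, so position $k$ sits at the same offset $j$ inside a $\Theta$-chunk for both; legality forces $\tau(k)\le\Theta(j+1)$, hence $\mu(k)<\tau(k)\le\Theta(j+1)$ means position $k$ is a \emph{drop} for $\mu$ and its current block ends exactly there. Consequently $(\mu(k+1),\mu(k+2),\dots)$ genuinely is a (possibly zero-padded) $\cH$-expression, your bijection-with-$\OI$ argument applies to it, and $\sum_{j>k}\mu(j)\theta^{j-1}<\theta^{k}\psi=\theta^{k-1}$ as required. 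The architecture of your proof survives, but the lemma must be restated for block-end positions of the smaller sequence only, together with the observation that the first difference is automatically such a position.
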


 By Theorem \ref{thm:ZT-OI-H}, Proposition \ref{prop:ZT-OI-order-H} and  (\ref{eq:theta}), 
 the  function from $\set{\mu\in\cF^* : \mu(1)=1}$ to $ [0,1)$ given by the following is bijective:
 $$ \mu \mapsto \frac{ \mu\cdot \wh H - 1}{\psi - 1},$$
 and hence, 
  $h_K^*$ defined in Definition \ref{def:h-star} is well-defined.

\begin{deF}\label{def:h-star}  \rm  
Let $K$ be a sequence of positive integers approaching $\infty$  such that
given $\mu\in\cHst$   such that $\mu(1)=1$,
the following limit exists:
\begin{equation}
\lim_{s\to\infty}\prob{\nN : \LB_s(K_n)\le  \mu\vert s}. \label{eq:h-star}
\end{equation} 

Let $h_K^* : [0,1]\to[0,1]$ be the function given by 
$h_K^*(0)=0$, $h_K^*(1)=1$, and $
h_K^*\left( \frac{ \mu\cdot \wh H - 1}{\psi - 1}\right)$ is equal to the value in (\ref{eq:h-star}).
 If $h_K^*$ is continuous and increasing, 
then $K$ is said to {\it have continuous \lbd\ under $\cH$-expansion}.
\end{deF}

\begin{theorem}\label{thm:h-star}
Let $K$ be a sequence with continuous leading block distribution under 
$\cH$-expansion.
Let $h_K^*$ be the function defined in Definition 
\ref{def:h-star}.
Then, there is a uniform continuation $h$ of $H_n$ such that 
$h_\infty\Inv=h_K^*$ and 
$\frc{h\Inv(K_n)}$ is equidistributed.
\end{theorem}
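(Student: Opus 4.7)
The plan is to mirror the proof of Theorem \ref{thm:f-star} for the $\cF$-case, with $\phi,\al,F,\fF$ replaced by $\psi,\delta,H,\fH$, and to lean on Theorem \ref{thm:binet}, Theorem \ref{thm:ZT-OI-H}, Proposition \ref{prop:ZT-OI-order-H}, and Lemma \ref{lem:LB-inequality-3} as the $\cH$-analogues of the tools invoked there. First I would define the candidate continuation $h:[1,\infty)\to\real$ by $h(x)=H_n+(H_{n+1}-H_n)(h_K^*)\Inv(p)$ where $x=n+p$ and $p=\frc{x}$. Since $(h_K^*)\Inv$ is a fixed increasing continuous function on $[0,1]$, independent of $n$, $h$ is automatically a uniform continuation of $H$ with $h_\infty=(h_K^*)\Inv$, so $h_\infty\Inv=h_K^*$ as required. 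The real work is to show that $\frc{h\Inv(K_n)}$ is equidistributed.

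Fix $\beta\in(0,1)$. By Theorem \ref{thm:ZT-OI-H} and the bijection $\mu\mapsto(\mu\cdot\wh H-1)/(\psi-1)$ from $\set{\mu\in\cHst:\mu(1)=1}$ onto $[0,1)$, there is a unique $\mu\in\cHst$ with $\mu(1)=1$ satisfying $h_\infty(\beta)=(\mu\cdot\wh H-1)/(\psi-1)=\fH_\infty(\log_\psi(\mu\cdot\wh H))$. For each $n$ write $K_n=\fH(m+p_n)=h(m+p_n')$ with $H_m\le K_n<H_{m+1}$ and $p_n,p_n'\in[0,1]$; since $\fH$ and $h$ are both uniform continuations of $H$, the two are linked by $\fH_m(p_n)=h_\infty(p_n')+o(1)$. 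For the $\limsup$ bound, $\frc{h\Inv(K_n)}=p_n'\le\beta$ together with uniform convergence of $\fH_m$ yields $\frc{\fH\Inv(K_n)}\le\log_\psi(\mu\cdot\wh H)+o(1)$; fixing any integer $t\ge N$, Proposition \ref{prop:ZT-OI-order-H} gives $\log_\psi(\mu\cdot\wh H)<\log_\psi(\wt{\mu\vert t}\cdot\wh H)$, so Lemma \ref{lem:LB-inequality-3} forces $\LB_t(K_n)\le\mu\vert t$ for $n$ large, whence $\limsup\le\prob{\nN:\LB_t(K_n)\le\mu\vert t}\to h_K^*\bigl((\mu\cdot\wh H-1)/(\psi-1)\bigr)=\beta$ as $t\to\infty$.

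For the $\liminf$ bound, since $\beta>0$ one has $\mu(t_0)>0$ for some $t_0>1$, and for all sufficiently large $t$ one can produce unique $\check\mu,\wh\mu\in\cH_t$ satisfying $1+\check\mu*H=\wh\mu*H$ and $1+\wh\mu*H=\mu\vert t*H$. Reversing the chain of implications above, $\LB_t(K_n)\le\check\mu$ forces $\frc{h\Inv(K_n)}\le\beta$, and the straightforward $\cH$-analogue of Lemma \ref{lem:finite-cf} gives $\liminf\ge h_K^*\bigl((\check\mu\cdot\wh H-1)/(\psi-1)\bigr)$. To close the sandwich I need $\check\mu\cdot\wh H\to\mu\cdot\wh H$ as $t\to\infty$: here Theorem \ref{thm:binet} plays the role of the Binet formula for $F_n$. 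Substituting $H_k=\delta\psi^k+O(\psi^{rk})$ with $r<1$ into the identity $2+\check\mu*H=\mu\vert t*H$ and dividing by $\delta\psi^t$ produces $O(\psi^{-(1-r)t})+\check\mu\cdot\wh H=\mu\vert t\cdot\wh H\to\mu\cdot\wh H$, as desired.

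The main obstacle is combinatorial rather than analytic, namely producing $\check\mu,\wh\mu\in\cH_t$ that are simultaneously legal $\cH$-blocks and whose $\wh H$-values sandwich $\mu\cdot\wh H$ with an error governed by Theorem \ref{thm:binet}. In the $\cF$-case this reduces to selecting adjacent blocks in the lexicographic order on $\cF_t$, which are guaranteed to exist under the mild condition $\mu(t_0)>0$; for a general periodic \zec\ collection $\cH$ one must use the recursive description of $\cH$ in Definition \ref{def:GZE} together with the equivalence between the lexicographic order and the dot-product order given by Proposition \ref{prop:ZT-OI-order-H} to verify that two immediate predecessors of $\mu\vert t$ in $\cH_t$ exist for all large enough $t$. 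Once this bookkeeping is in place, the analytic estimates above combine to give $\liminf\ge\beta\ge\limsup$, so $\prob{\nN:\frc{h\Inv(K_n)}\le\beta}=\beta$ and the equidistribution is established.
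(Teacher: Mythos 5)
Your proposal is correct and follows essentially the same route the paper intends: the paper gives no separate proof of Theorem \ref{thm:h-star}, stating only that the argument of Theorem \ref{thm:f-star} carries over, and your adaptation (replacing $\phi,\al,F,\fF$ by $\psi,\delta,H,\fH$, invoking Theorem \ref{thm:ZT-OI-H}, Proposition \ref{prop:ZT-OI-order-H}, Lemma \ref{lem:LB-inequality-3}, and using Theorem \ref{thm:binet} for the $\check\mu\cdot\wh H\to\mu\cdot\wh H$ estimate) is exactly that intended translation, including the correct identification of the one genuinely $\cH$-specific point, namely the existence of the two lexicographic predecessors $\check\mu,\wh\mu$ in $\cH_t$ for large $t$.
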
 
 \noindent
There is no difficulty in applying the arguments of the proof of Theorem \ref{thm:f-star}
to Theorem \ref{thm:h-star}, and we leave the proof to the reader.

\section{Benford behavior within expansions}\label{sec:within-expansion}

As mentioned in the introduction, \benlaw\ under base-$b$ expansion arises with \zec\ expansion, and 
let us review this result, which is available in \cite{miller-behavior-2}.

Let $\cK$ be a periodic \zec\ collection defined in Definition \ref{def:GZE}, and 
let $K$ be the  fundamental sequence of $\cK$, defined in Definition \ref{def:FS}.
Let $S$ be an infinite subset of $\set{K_n : \nN}$ 
such that $q(S):=\prob{\nN : K_n\in S}$ exists. 
Recall the product $*$ from Definition \ref{def:products}.
For a randomly selected integer $n\in [1,K_{t+1})$,  let 
$\mu * K$ be the $\cK$-expansion of $n$, let $M=\len(\mu)$, and define
\begin{equation}
P_t (n):=\frac{\sum_{k=1}^M \mu(k)\chi_S(K_k)}
 {\sum_{k=1}^M \mu(k)} \label{eq:Psmu}
\end{equation}  
where $\chi_S$ is the characteristic function 
on $\set{ K_k : k\in \nat}$, i.e., $\chi_S(K_k)=1$ if $K_k\in S$ and $\chi_S(K_k)=0$, otherwise.
Proved in \cite{miller-behavior} is that given a  real number 
$\ep>0$, 
the probability of $n\in [1,K_{t+1})$
such that $\abs{P_t (n)-q(S)}< \ep  $ is equal to $1+o(1)$ 
as a function of $t$.
For Benford behavior, we 
let $S$  be the set of  $K_n$ that have leading (fixed)
leading decimal digit $d$.
Then, $q(S)=\log_{10}(1+\frac1d)$, and 
the probability of having a summand $K_n$ with leading digit $d$ 
within the $\cK$-expansion is nearly $q(S)$ most of the times. 

This result immediately applies to our setup.
Let $\cH$ and $H$ be as defined in Definition \ref{def:GZE} different from $\cK$ and $K$.
For example, let $\cH$ be the base-$b$ expressions, and 
let $\cK$ be the \zec\ expressions.
Then,
$H$ is the sequence given by $H_n=b^{n-1} $ and $K=F$ is the \fib\ sequence.
Recall from Definition \ref{def:GZ-LB} that $\cH_s $ is a set of leading blocks under $\cH$-expansion, and  
 that $\LB_s^\cH(n) $ denotes the leading block of $n$ in $\cH_s$ under $\cH$-expansion.
By Corollary \ref{cor:equidistribution-examples}, the sequence $K$ satisfies
(strong) Benford's Law under $\cH$-expansion, i.e., 
$$\prob{\nN : \LB_s^\cH(K_n)=\vecb }\ =\ 
\log_\psi \frac{ \wt \vecb\cdot \wh H}{ \vecb\cdot \wh H}
$$
where 
$\vecb\in\cH_s$ and $\psi=b$, and 
this is  \benlaw\ under base-$b$ expansion.
The case considered in the introduction is that
$\cH$ is the \zec\ expansion and $\cK$ is the binary expansion.
The following is a corollary of \cite[Theorem 1.1]{miller-behavior-2}.
Recall Definition \ref{def:dominant-zero}.
\begin{theorem}\label{thm:Psmu}
Let $\cH$ and $H$ be as defined in Definition \ref{def:GZE}, and 
Let $K$ be the fundamental sequence of a   periodic \zec\ collection $\mathcal K$ such that 
$\psi_\cH^r \ne \psi_\cK $ for all $r\in\ratn$ where 
$\psi_\cH$ and $\psi_\cK$ are the dominant real zeros of $g_\cH$ and $g_\cK$, respectively.
Given $\vecb\in \cH_s$, 
let $S_\vecb:=\set{K_n : \LB_s^\cH(K_n) = \vecb,\ \nN }$.
For a randomly selected integer $n\in [1,K_{t+1})$,  let  
  $P_t(n)$ be the proportion defined in (\ref{eq:Psmu}) with respect to $S=S_\vecb$.
Then, given a   real number 
$\ep>0$, 
the probability of $n\in [1,K_{t+1})$
such that 
$$\abs{P_t(n)-\log_{\psi_\cH}\frac{ \wt \vecb\cdot \wh H}{ \vecb\cdot \wh H}}\ <\  \ep $$ is equal to $1+o(1)$
as a function of $t$.
\end{theorem}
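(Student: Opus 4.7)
The plan is to reduce the statement to the already-established concentration result of \cite[Theorem 1.1]{miller-behavior-2}, whose conclusion applies whenever $S \subset \{K_n : \nN\}$ is an infinite subset for which the density $q(S):=\prob{\nN : K_n \in S}$ exists. So the work splits into two pieces: verifying that $S_\vecb$ has a well-defined density under the hypothesis $\psi_\cH^r \ne \psi_\cK$ for all $r\in \ratn$, and identifying that density with the claimed logarithmic value. Once this is done, plugging $S=S_\vecb$ and $q(S_\vecb)=\log_{\psi_\cH}\frac{\wt\vecb\cdot \wh H}{\vecb\cdot \wh H}$ into the cited concentration result yields exactly the statement of the theorem.

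First I would compute $q(S_\vecb)$. By Theorem \ref{thm:binet} applied to the fundamental sequence $K$ of $\cK$, we have $K_n = \delta_\cK\, \psi_\cK^n (1+o(1))$ for a positive real constant $\delta_\cK$. The hypothesis that $\psi_\cK$ is not a rational power of $\psi_\cH$ implies $\log_{\psi_\cH}(\psi_\cK)$ is irrational: indeed, if $\log_{\psi_\cH}(\psi_\cK) = p/q \in \ratn$, then $\psi_\cK = \psi_\cH^{p/q}$, contradicting the hypothesis. With $a=\delta_\cK$ and $b=\psi_\cK$, Theorem \ref{thm:equidistribution-examples-2} (applied with the generalized \zec\ collection $\cH$) gives
\[
q(S_\vecb)\ =\ \prob{\nN : \LB_s^\cH(K_n)=\vecb}\ =\ \log_{\psi_\cH}\frac{\wt\vecb\cdot \wh H}{\vecb\cdot \wh H}.
\]
In particular the limit exists, and $S_\vecb$ is infinite (since $\vecb$ occurs as a leading block with positive density).

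Next I would invoke \cite[Theorem 1.1]{miller-behavior-2} for the periodic \zec\ collection $\mathcal K$, its fundamental sequence $K$, and the subset $S=S_\vecb$: for every $\ep>0$,
\[
\prob{n\in[1,K_{t+1}) : |P_t(n)-q(S_\vecb)| < \ep}\ =\ 1+o(1)
\]
as $t\to\infty$. Substituting the formula for $q(S_\vecb)$ computed in the previous step yields the conclusion. The main (and only nontrivial) step is the density computation; once one sees that the hypothesis $\psi_\cH^r\ne\psi_\cK$ is precisely the irrationality condition needed to invoke Theorem \ref{thm:equidistribution-examples-2}, the rest is immediate from the black-box concentration result. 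The potential obstacle to watch for is a degenerate case where the cited theorem's hypotheses (e.g.\ infiniteness or positive density of $S$) might fail; but the formula $q(S_\vecb)>0$ for every $\vecb\in\cH_s$ rules this out, since the leading block $\vecb$ occurs with strictly positive probability among the values $K_n$.
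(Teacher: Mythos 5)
Your proposal is correct and follows the same route as the paper: the paper likewise treats the theorem as an immediate corollary of \cite[Theorem 1.1]{miller-behavior-2}, with the only substantive input being that $q(S_\vecb)$ exists and equals $\log_{\psi_\cH}\frac{\wt\vecb\cdot\wh H}{\vecb\cdot\wh H}$, obtained exactly as you do by combining Theorem \ref{thm:binet} (so $K_n=\delta_\cK\psi_\cK^n(1+o(1))$) with Theorem \ref{thm:equidistribution-examples-2}, the hypothesis $\psi_\cH^r\ne\psi_\cK$ supplying the required irrationality of $\log_{\psi_\cH}\psi_\cK$. Nothing further is needed.
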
 

\section{Future work}

Instead of the leading digit, one can look at the distribution of the
digit in the second, third, or generally any location. For a sequence that
is strong Benford, the further to the right we move in location, the more
uniform is the distribution of digits. A natural question is to ask
whether or not a similar phenomenon happens with Zeckendorf
decompositions, especially as there is a natural furthest to the right one
can move.

We can also look at signed Zeckendorf decompositions. Alpert \cite{A}
proved that every integer can be written uniquely as a sum of Fibonacci
numbers and their additive inverses where two if two consecutive summands
have the same sign then their indices differ by at least 4 and if they are
of opposite sign then their indices differ by at least 3. We now have more
possibilities for the leading block, and one can ask about the various
probabilities. More generally, one can consider the $f$-decompositions
introduced in \cite{DDKMMV}, or the non-periodic \zec\ collections introduced in \cite{chang}.

Additionally, one can explore sequences where there is no longer a unique
decomposition, see for example \cite{ BHLLMT1, BHLLMT2,CFHMN1, CFHMN2, chang-2018},
and ask what is the distribution of possible leading blocks. There are
many ways we can formulate this question. We could look at all legal
decompositions, we could look at what happens for specific numbers, we
could look at what
happens for specific types of decompositions, such as those arising from
the greedy algorithm or those that use the
fewest or most summands.

\end{document}